\newtheorem{theorem}{Theorem}[section]
\newtheorem{lemma}[theorem]{Lemma}
\newtheorem{proposition}[theorem]{Proposition}
\newtheorem{cor}[theorem]{Corollary}
\newtheorem{conjecture}[theorem]{Conjecture}
\newtheorem{claim*}{Claim}
\newtheorem{theo}[theorem]{Theorem}
\newtheorem{definition}[theorem]{Definition}
\newcounter{cor}
\newtheorem{thmy}{Theorem}
\newcounter{theo}
\newtheorem{thmyy}{Theorem}
\theoremstyle{definition}
\newtheorem{remark}[theorem]{Remark}
\newtheorem{example}[theorem]{Example}
\renewcommand{\restriction}{\mathord{\upharpoonright}}
\newcommand{\G}{{\mathbb G}}
\newcommand{\PP}{{\mathbb P}}
\newcommand{\Q}{{\mathbb Q}}
\newcommand{\Z}{{\mathbb Z}}
\newcommand{\kc}{\kappa(\mathcal E)}
\newcommand{\hi}{\hat{\imath}}
\newcommand{\hj}{\hat{\jmath}}
\newcommand{\chiSC}{\max\{1, \chi_S(\calC)\}}
\newcommand{\calC}{{\mathcal C}}
\newcommand{\calD}{{\mathcal D}}
\newcommand{\calE}{{\mathcal E}}
\newcommand{\calL}{{\mathcal L}}
\newcommand{\calM}{{\mathcal M}}
\newcommand{\calO}{{\mathcal O}}
\newcommand{\calX}{{\mathcal X}}
\newcommand{\calZ}{{\mathcal Z}}
\DeclareMathOperator{\supp}{supp}
\DeclareMathOperator{\ord}{ord}
\DeclareMathOperator{\Pic}{Pic}
\DeclareMathOperator{\Spec}{Spec}
\DeclareMathOperator{\id}{id}
\newcommand{\isom}{\cong}
\numberwithin{equation}{section}
\numberwithin{table}{section}
\newcommand{\Cone}{{\Theta_1}}
\newcommand{\Crs}{{\Theta_2}}
\newcommand{\Ctwo}{{C_1}}
\newcommand{\Cirs}{{C_2}}
\newcommand{\Cfour}{{C_3}}
\newcommand{\Cfive}{C_4}
\newcommand{\Csix}{{C_5}}
\newcommand{\Cseven}{{C_6}}
\newcommand{\Cnine}{{C_7}}
\newcommand{\Cten}{{C_{8}}}
\newcommand{\Cind}{{C_9}}
\newcommand{\Cdep}{{C_{10}}}
\newcommand{\Celeven}{{D_1}}
\newcommand{\Done}{{D_1}}
\newcommand{\Dirs}{{D_2}}
\newcommand{\Dtwo}{{D_3}}
\newcommand{\Dthree}{{D_4}}
\newcommand{\Dfour}{{D_5}}
\newcommand{\Dfive}{{D_6}}
\newcommand{\Dsix}{{D_7}}
\newcommand{\Dseven}{{D_8}}
\title[Lang-Vojta Conjecture over function fields for surfaces dominating $\G_m^2$]{Lang-Vojta Conjecture over function fields \\ for surfaces dominating $\G_m^2$}
\author{Laura Capuano}
\address{DISMA ``Luigi Lagrange'', Politecnico di Torino, Corso Duca degli Abruzzi 24, 10129 Torino, Italy}
\email{laura.capuano@polito.it}
\author{Amos Turchet}
\address{Dipartimento di Matematica e Fisica, Università degli Studi Roma 3, L.go S. L. Murialdo 1, 00146 Roma, Italy}
\email{amos.turchet@uniroma3.it}
\date{}
\subjclass[2010]{14G40, 11J25, 11G50}
\keywords{Vojta's conjectures, function fields, fibered threefolds, Heights, $S$-units}
\begin{document}

\maketitle
 \begin{abstract}
 We prove the nonsplit case of the Lang-Vojta conjecture over function fields for surfaces of log general type that are ramified covers of $\G_m^2$. This extends the results of \cite{CZGm}, where the conjecture was proved in the split case, and the results of \cite{CZConic, TurchetTrans} that were obtained in the case of the complement of a degree four and three component divisor in $\PP^2$. We follow the strategy developed by Corvaja and Zannier and make explicit all the constants involved. 
 \end{abstract}

\section{Introduction}
The celebrated Lang-Vojta conjecture, see \cite[Conjecture F.5.3.6]{HindrySilverman}, predicts degeneracy of $S$-integral points on varieties of log general type over number fields. It is known in full generality for curves, where it reduces to Siegel's theorem (see for example \cite{Siegel}), and for subvarieties of semi-abelian varieties \cite{Falt_ab, vojtaSA, vojtaSA2}. Very deep results have been obtained applying the method developed by Corvaja and Zannier in \cite{CZAnnals}, building on \cite{CZSiegel}, which led to the proof of the conjecture in several new cases, e.g. \cite{CZ_IMRN,CZ_Toh,levin, CZ_Adv,Autissier} (see \cite{CBook} for surveys of known results). 

%

\medskip
In the case of function fields, the Corvaja and Zannier strategy allows one to obtain results that are still out of reach with the current methods in the number field case: for example, in \cite{CZConic} the authors prove the split case of the conjecture for the complement of a conic and two lines in $\PP^2$, a problem which is still open over number fields. The latter result has then been generalized in \cite{CZGm} for isotrivial surfaces that are ramified covers of $\G_m^2$ (see also \cite{Campana, NWY, Lu} for analogous results in the compact and analytic cases).
\medskip

The goal of this article is to prove the non-isotrivial case of \cite{CZGm}. The setting is the following: let $\kappa$ be an algebraically closed field of characteristic 0, let $\kappa(\calC)$ be the function field of a nonsingular projective curve $\calC$ and let $S$ be a finite set of points of $\calC$. Let $(X,D)$ be a pair of log general type over $\kappa(\calC)$, where $X$ is a nonsingular projective surface, $D$ is a simple normal crossings Cartier divisor on $X$ and there is a dominant morphism $X \setminus D \to \G_m^2$ over $\kappa(\calC)$. Let $(\calX,\calD)$ be a log canonical model of $(X,D)$ such that there exists a generically finite dominant morphism $\pi: \calX \to \PP^2 \times \calC$. 

%

\begin{thmy}
 \label{th:main}
 Let $Z$ be the closure of the ramification divisor of $\pi\restriction_{\calX \setminus \calD}$ and assume that the image of the generic fiber $\pi(Z_\eta)$ is disjoint from the singular points of $\PP^2 \setminus \G_m^2$. Then, for every projective embedding $\varphi$ of $\calX$, there exists an explicit positive constant $C = C(Z_\eta,\deg \pi, \varphi)$ such that every section $\sigma: \calC \to \calX$ with $\supp (\sigma^*\calD) \subseteq S$ satisfies
      \[
	\deg \sigma(\calC) \leq C \cdot \max \{ 1, \chi_S(\calC) \},
      \]
      where $\chi_S(\calC)$ is the (negative) Euler characteristic of the affine curve $\calC \setminus S$, i.e. $\chi_S(\calC) = 2 g(\calC) - 2 + \# S$.
\end{thmy}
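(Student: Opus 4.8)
\emph{Strategy and Step 1 (reduction).} The plan is to follow the method of Corvaja and Zannier: reduce the degree bound to a height bound for a point of $\G_m^2$ with $S$-unit coordinates, use the covering $\pi$ to attach to an integral point a projective point with $S$-integral coordinates, and then conclude by the subspace inequality over the function field $\kappa(\calC)$, reading off $C$ from each step. A section $\sigma\colon\calC\to\calX$ with $\supp(\sigma^{*}\calD)\subseteq S$ is the same datum as a $\kappa(\calC)$-point $P$ of the generic fiber $X$ integral with respect to $\calD$ away from $S$, and $\deg\sigma(\calC)$ equals, up to a factor depending only on $\varphi$ and $\deg\pi$, the function-field height $h_{X}(P)$: fixing an ample divisor $\calM$ on $\PP^{2}\times\calC$ and writing $m\,\pi^{*}\calM-\varphi^{*}\OO(1)$ as an effective divisor for a suitable $m=m(\varphi)$, one has $\deg\sigma(\calC)\le m\,(\pi^{*}\calM\cdot\sigma(\calC))$ unless $\sigma(\calC)$ lies in one of its finitely many components, in which case $\sigma$ corresponds to a $\kappa(\calC)$-point of a fixed curve $V_{\eta}\subset X$ and will be treated at the end. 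Composing $P$ with the morphism $\bar\rho\colon X\to\PP^{2}$ extending the given $X\setminus D\to\G_m^{2}$ (it is the generic fiber of $\pi$ and sends $\calD$-integral points to $S$-integral points of $\G_m^{2}$) gives $(u,v)\in\G_m^{2}(\kappa(\calC))$ with $u,v\in\OO_{S}^{*}$; since $\bar\rho$ is finite of degree $\deg\pi$, $h_{X}(P)$ is comparable to $h(u)+h(v)$ up to $\deg\pi$ and the bounded quantity $\#S$. It therefore suffices to bound $h(u)+h(v)$ by an explicit multiple of $\chiSC$.

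\emph{Step 2 (the covering and the branch curve).} In characteristic $0$ the trace form splits $\bar\rho_{*}\OO_{X}=\OO_{\PP^{2}}\oplus\calF$; the sections of $\calF$ pull back to functions $\omega_{1},\dots,\omega_{\deg\pi-1}$ on $X$, regular on $X\setminus D$, whose divisors are supported on $D$ together with the ramification $Z_{\eta}$, and whose symmetric functions over the sheets of $\bar\rho$ are expressible through the branch curve $B:=\pi(Z_{\eta})\subset\G_m^{2}$. By hypothesis the closure of $B$ in $\PP^{2}$ misses the three vertices of the triangle $\PP^{2}\setminus\G_m^{2}$, so $B$ is cut out by a single Laurent polynomial $G\in\kappa(\calC)[x^{\pm1},y^{\pm1}]$ whose Newton polygon, number of monomials and --- the crucial point in the non-isotrivial case --- the heights of whose coefficients, are bounded purely in terms of $Z_{\eta}$ and $\deg\pi$; after enlarging $S$ by the bounded set of places dividing those coefficients (which changes $\chiSC$ only boundedly) we may assume $G$ has $S$-integral coefficients. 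The same ``missing the vertices'' hypothesis guarantees that the components of $D$ (the pull-backs of the three lines of the triangle) together with the divisors attached to $\omega_{1},\dots,\omega_{\deg\pi-1}$ are in sufficiently general position for the subspace inequality below.

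\emph{Step 3 (the main estimate).} Fix a suitable ample divisor $A$ on $X$ and, for large $N$, choose a basis of $H^{0}(X,NA)$ compatible with the filtrations by order of vanishing along the components of $D$, built from the products $\omega_{i}\cdot\bar\rho^{*}(x^{a}y^{b})$. Because $(X,D)$ is of log general type --- equivalently, by the ramification formula and $K_{\PP^{2}}+(\text{triangle})\sim 0$, because $Z_{\eta}$ is big --- a Riemann--Roch and volume computation shows that the total order of vanishing such a filtration collects exceeds the threshold required by the subspace inequality by a definite positive amount, explicit in terms of $Z_{\eta}$ and $\deg\pi$; this is the quantitative substitute for Corvaja--Zannier's appeal to $K_{X}+D$ being big. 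Evaluating the basis at $P$ yields a point $\Phi(P)\in\PP^{n}(\kappa(\calC))$ with $S$-integral coordinates, of height comparable to $h(u)+h(v)$; feeding the linear forms coming from the filtrations into the subspace inequality over $\kappa(\calC)$ --- whose error term is a bounded multiple of $\chiSC$ plus a term controlled by the heights of the coefficients of those forms, hence by $Z_{\eta}$ and $\deg\pi$ --- gives $h(u)+h(v)\le C'\cdot\chiSC$, unless $\Phi(P)$ lies in one of finitely many hyperplanes. In that degenerate case $(u,v)$ lies on one of finitely many curves $W\subset\G_m^{2}$ --- translates of subtori, components of $B$, or their intersections --- determined by $G$ and the $\omega_{i}$; the hypothesis on $Z_{\eta}$ forces $W$, or its preimage in $X\setminus D$, to be of log general type as a curve, so the one-dimensional case of the conjecture over function fields --- an effective form of Siegel's theorem with explicit bound --- settles these cases, and, after restriction to the corresponding curves, also the sections set aside in Step 1. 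Tracking the constants through Steps 1--3 yields the asserted $C=C(Z_{\eta},\deg\pi,\varphi)$.

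\emph{Main obstacle.} The genuinely new difficulty with respect to the isotrivial case of \cite{CZGm} is non-isotriviality itself: the covering $\pi$, the auxiliary functions $\omega_{i}$ and the branch polynomial $G$ all vary over $\calC$, so the linear forms entering the subspace inequality have non-constant coefficients; one must therefore use the moving-coefficient version and show that the extra error term it produces is bounded solely in terms of the geometric data $Z_{\eta}$ and $\deg\pi$, not of $\calC$ or $S$. Correspondingly, the list of degenerate curves $W$ in Step 3 is longer and must be controlled using the geometry of the family $\calX\to\calC$ rather than the fixed geometry of a surface over $\kappa$. Concretely, the heart of the work is to make the volume computation of Step 3 and the general-position statement of Step 2 explicit and uniform, and this is exactly where the assumption that $\pi(Z_{\eta})$ is disjoint from the singular points of $\PP^{2}\setminus\G_m^{2}$ is needed.
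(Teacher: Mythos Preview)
Your outline follows the ``filtration plus subspace theorem'' template of Corvaja--Zannier over number fields, but this is \emph{not} the route the paper takes, and several of your steps are not justified as written.

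\textbf{What the paper actually does.} The paper never invokes a subspace inequality or a Riemann--Roch volume computation. Instead, the core engine is a bound on the number of \emph{multiple zeros} of a polynomial $A(X,Y)\in\kappa(\calC)[X,Y]$ evaluated at $S$-units (Theorem~\ref{th:1.2}), proved via the function-field gcd estimate of \cite[Corollary~2.3]{CZConic} applied to the resultants of $A$ and its ``derivative'' $B$. This feeds Proposition~\ref{prop:ram}: if $A$ cuts out $\pi(Z)$, then $\deg(\sigma^*Z\setminus S)\le \varepsilon H$. The main theorem then follows by writing $m(Z+\rho^*(K_\calC+2S))\sim H+E$ (bigness of $Z$ only \emph{relatively}, cf.\ Proposition~\ref{prop:Z+e}), estimating $\sigma(\calC)\cdot Z$ via the above, and closing with the Brownawell--Masser generalized $abc$ inequality applied to the monomials of $f(u,v,1)$. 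No filtration, no auxiliary $\omega_i$, no subspace theorem.

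\textbf{Gaps in your approach.} First, your Step~3 rests on an unproved assertion: that for a finite cover $X\setminus D\to\G_m^2$ with $Z_\eta$ big, one can build a filtration of $H^0(X,NA)$ whose total vanishing order exceeds the subspace threshold by a definite margin \emph{explicit in $Z_\eta$ and $\deg\pi$}. This is precisely the hard part; you have stated the desideratum, not established it. Second, over function fields the subspace inequality with moving coefficients (which you correctly flag as the obstacle) requires care about the dependence of the error term on the heights of the moving forms; you assert this is bounded by $Z_\eta,\deg\pi$ but do not indicate how. Third, your degenerate-case analysis is too optimistic: hyperplane sections of $\Phi$ pull back to hypersurfaces in $X$, not automatically to curves, and the claim that every such $W$ is forced to be of log general type by the hypothesis on $\pi(Z_\eta)$ is unsupported --- translates of one-dimensional subtori through points of $B$ need not be, and handling these is exactly where the paper's Lemma~\ref{lem:314} and the explicit dependence-relation alternative in Theorem~\ref{th:1.2} do real work. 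Finally, your positivity input ``$Z_\eta$ big'' is a statement on the generic fiber; the paper shows (Section~\ref{sec:ram_pos}) that this does \emph{not} suffice to make $Z$ big on $\calX$, and has to twist by $\rho^*(K_\calC+2S)$ --- a point your reduction in Step~1 does not address.
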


\noindent The main application of Theorem \ref{th:main} is to complements of normal crossings divisors in $\PP^2_{\kappa(\calC)}$. 

\begin{thmy}
      \label{th:mainP2}
      Let $D$ be a divisor in $\PP^2_{\kappa(\calC)}$ of degree $d \geq 4$ with $r \geq 3$ components. Let $\calD$ be the closure of $D$ in $\PP^2 \times \calC$ and let $S$ be a finite set of points of $\calC$ such that, for every $P \notin S$, the fiber $\calD_P$ has normal crossings singularities. Then, for every projective embedding $\varphi$ of $\PP^2 \times \calC$, there exists a constant $C = C(D,\varphi)$ such that every section $\sigma: \calC \to \PP^2 \times \calC$ with $\supp (\sigma^* \calD) \subseteq S$ verifies
    \[
      \deg \sigma(\calC) \leq C \max\{1, \chi_S(\calC) \}.
    \]
  \end{thmy}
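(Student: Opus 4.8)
The plan is to deduce Theorem~\ref{th:mainP2} from Theorem~\ref{th:main} in three steps: replace $D$ by a subdivisor with few components; construct an explicit dominant cover of $\G_m^2$ together with a model $(\calX,\calD)$ and a generically finite morphism $\pi:\calX\to\PP^2\times\calC$; and verify the hypotheses of Theorem~\ref{th:main}, the subtle one being the condition on the ramification divisor.

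\emph{Reduction to few components.} Since $\supp(\sigma^*\calD)\subseteq S$ implies $\supp(\sigma^*\calD')\subseteq S$ for every subdivisor $\calD'\subseteq\calD$, it suffices to find inside $D$ a divisor $D'$ that is still of log general type and still dominates $\G_m^2$, and to prove the bound for $D'$. If some component of $D$ has degree $\geq 2$, let $D'$ consist of that component together with two further components (they exist, as $r\geq 3$), so that $\deg D'\geq 4$; if every component of $D$ is a line, then $r\geq d\geq 4$ and we let $D'$ be four of them. As $\calD_P$ has normal crossings for all but finitely many $P$, the generic fibre $D'=\calD'_\eta$ is simple normal crossings, so its components are smooth, meet pairwise transversally, and no three of them pass through a common point; in particular $K_{\PP^2}+D'=(\deg D'-3)H$ is ample, and, as recalled next, $\PP^2\setminus D'$ dominates $\G_m^2$.

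\emph{The cover and the model.} Let $C_1,C_2,C_3$ be three components of $D'$, with defining forms $f_1,f_2,f_3$ of degrees $d_1,d_2,d_3$. The rank-two lattice $\Lambda=\{(a_1,a_2,a_3)\in\Z^3:\sum_ia_id_i=0\}$ provides two independent Laurent monomials $g=\prod_if_i^{a_i}$ and $h=\prod_if_i^{b_i}$, regular and nowhere vanishing off $C_1\cup C_2\cup C_3$; as the $C_i$ are distinct irreducible curves, $g$ and $h$ are multiplicatively independent modulo constants, so $(g,h)$ gives a dominant morphism $\PP^2\setminus(C_1\cup C_2\cup C_3)\to\G_m^2$ whose restriction to $\PP^2\setminus D'$ is still dominant (in the four-line case the image of the fourth line is a proper closed subvariety of $\G_m^2$). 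Following the treatment of the analogous situation in \cite{CZConic,TurchetTrans,CZGm}, one passes to a model on which this map is a morphism: blowing up, fibrewise over $\calC$, the finitely many indeterminacy points --- all lying on $C_1\cup C_2\cup C_3\subseteq D'$ --- produces a nonsingular projective surface $X$ over $\kappa(\calC)$, a reduced normal crossings divisor $D$ on $X$, and a morphism $X\setminus D\to\G_m^2$ extending the previous one. Taking $(\calX,\calD)$ to be a log canonical model of $(X,D)$, there is by construction a generically finite dominant $\pi:\calX\to\PP^2\times\calC$, and $\deg\pi$, together with the polarisations entering the final estimate, is explicit in $d_1,d_2,d_3$ and $\varphi$. (In the four-line subcase no blow-up is needed: $\pi$ may be taken to be a projective automorphism, so $Z=\emptyset$ below and the ramification condition is automatic.)

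\emph{The ramification condition and the conclusion.} Let $Z$ be the closure of the ramification divisor of $\pi\restriction_{\calX\setminus\calD}$. The core of the argument is to guarantee that $\pi(Z_\eta)$ avoids the three singular points of $\PP^2\setminus\G_m^2$. The ramification locus of the monomial cover is a Jacobian-type divisor that one writes down explicitly in terms of the $f_i$ and their common zeros, and there are only finitely many relevant combinatorial types of $D'$ (three lines being excluded, since the complement of three lines is not of log general type), each handled in turn. When the chosen toric compactification $\G_m^2\hookrightarrow\PP^2$ places a component of $\pi(Z_\eta)$ through a torus-fixed point, one changes the basis of $\Lambda$ by a suitable element of $\GL_2(\Z)$ --- equivalently, chooses a different toric compactification of $\G_m^2$ --- so as to move the torus-fixed points off $\pi(Z_\eta)$; this is precisely where the normal crossings hypothesis is used, and it is the step I expect to be the main obstacle. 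Granting it, Theorem~\ref{th:main} yields an explicit $C_0=C_0(Z_\eta,\deg\pi,\varphi')$, for a fixed projective embedding $\varphi'$ of $\calX$, such that $\deg\widetilde\sigma(\calC)\leq C_0\max\{1,\chi_S(\calC)\}$ for every section $\widetilde\sigma$ of $\calX\to\calC$ with $\supp(\widetilde\sigma^*\calD)\subseteq S$. Finally, given a section $\sigma:\calC\to\PP^2\times\calC$ with $\supp(\sigma^*\calD)\subseteq S$, we may assume $\sigma(\calC)$ is not contained in the finitely many curves contracted by $\pi$ (otherwise $\sigma(\calC)$ has bounded degree and there is nothing to prove); then the strict transform of $\sigma(\calC)$ under $\pi$ extends to a section $\widetilde\sigma$ of $\calX\to\calC$, and after enlarging $S$ by the finitely many points of bad reduction (which changes the constant by a controlled amount) one has $\supp(\widetilde\sigma^*\calD)\subseteq S$ and $\deg\sigma(\calC)\leq\deg\pi\cdot\deg\widetilde\sigma(\calC)$, up to the explicit comparison of $\varphi$ with $\pi^*\varphi$ and $\varphi'$. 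Combining the two inequalities gives $\deg\sigma(\calC)\leq C\max\{1,\chi_S(\calC)\}$ with $C=C(D,\varphi)$ explicit, as claimed.
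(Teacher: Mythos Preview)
Your overall strategy --- reduce to Theorem~\ref{th:main} by exhibiting a dominant cover of $\G_m^2$ --- is the paper's strategy, but your construction is different and, crucially, you leave the ramification hypothesis unproved (``Granting it\dots''). The paper does not pass to a subdivisor nor use an abstract lattice basis. Instead it groups the $r$ components into three bundles $D_1$, $D_2$, $D_3+\cdots+D_r$ with defining forms $g_1,g_2,g_3$ of degrees $d_1,d_2,d_3$, and writes down the single explicit morphism
\[
  \pi_0\colon \PP^2 \to \PP^2,\qquad [x_0:x_1:x_2]\longmapsto\bigl[g_1^{\,d_2d_3}:g_2^{\,d_1d_3}:g_3^{\,d_1d_2}\bigr],
\]
which is everywhere defined because the three entries have common degree $d_1d_2d_3$ and $D$ has normal crossings. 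For this particular map the ramification condition is not finessed by a change of basis but verified directly: the paper invokes the argument of \cite[Theorem~1]{CZGm} to show that any irreducible component $E$ of the ramification of $\pi_0$ whose image passes through a singular point of $\PP^2\setminus\G_m^2$ already lies in the support of $D$. That is exactly the step you flag as ``the main obstacle'' and do not carry out.

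Your proposed workaround --- apply a $\GL_2(\Z)$ change of basis of $\Lambda$ to move the torus-fixed points off $\pi(Z_\eta)$ --- is not obviously available. Such a change of toric coordinates is only a birational self-map of $\PP^2$, so after composing you must re-resolve indeterminacy and recompute the ramification; there is no a priori reason the new $Z_\eta$ avoids the new torus-fixed points, and you give no argument that a good basis exists. The paper's choice of exponents $(d_2d_3,d_1d_3,d_1d_2)$ is precisely what makes the ramification analysis tractable (the preimage of each torus-fixed point is a pairwise intersection $D_i\cap D_j$, forcing any ramification curve through it into $\supp D$), so you should adopt that map rather than the general lattice construction.
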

  
The isotrivial case of Theorem \ref{th:mainP2} was proved in \cite[Theorem 1]{CZGm}, and previously in \cite[Theorem 1.1]{CZConic} for $d=4$ and $r=3$; the latter case was obtained in \cite[Theorem 1.3]{TurchetTrans} for non-isotrivial pairs. Note that, when $r \geq 4$, the conclusion is known to hold essentially by a reduction to \cite{Brownawell1986} or \cite{Voloch1985}.

\begin{remark}
 In the statements of Theorem \ref{th:main}
 , given a projective embedding of $\calX$, the degree of $\sigma(\calC)$ is bounded by a constant $C$ multiplied by $\max\{ 1, \chi_S(\calC)\}$, where the constant $C$ depends only on the geometric data of the finite map $\pi$ and its ramification. The dependence on the curve appears \emph{only} in the Euler characteristic. In fact the proof shows that $C$ depends only on the generic fiber $(X,D) \to \G_m^2$ and the projective embedding. This implies that, given a finite cover $\calE \to \calC$ and a pair $(\calX,\calD) \to \calC$ as before, one will obtain the same result for the pair $(\calX, \calD) \times_\calC \calE$ with the \emph{same} constant $C$ (up to the choice of a compatible embedding). This is consistent with more general conjectures of Vojta  (see Section \ref{sec:general_Vojta} or \cite[Section 10.2]{AKnotes} for a more detailed discussion).
\end{remark}

    The main idea in the proof of Theorem \ref{th:main}, as in \cite{CZGm}, is to estimate the contribution of the ramification divisor of the finite map $\pi$ to the height of a section $\sigma \in X(\kappa(\calC))$. More precisely, the strategy of the proof is the following: in Section \ref{sec:depend} we obtain a preliminary result on $S$-units which satisfy a multiplicative dependence relation; this is used in Section \ref{sec:AB}, where we prove an explicit bound for the number of multiple zeros of polynomials in $\kappa(\calC)[X,Y]$ evaluated at $S$-units, extending \cite[Theorem 1.2]{CZConic}. This latter result is the key point to estimate the contribution of the ramification divisor $Z$ to the height of a section, which is obtained in Section \ref{sec:ram}. 
    
    In the nonsplit case one needs to deal with the problem that the log general type assumption does not guarantee in general the positivity of the ramification divisor $Z$. We discuss various results about the divisor $Z$ in Section \ref{sec:ram_pos}. In particular, we show that, even if the divisor $Z$ might not be big, its twist by the pullback of a positive divisor on $\calC$ is big. Moreover, in the case in which the model of the divisor $D$ is ample, the ramification divisor itself can be shown to be big (see Proposition \ref{prop:Zbig}).
    
  We prove Theorem \ref{th:main} in Section \ref{sec:main}, where we apply Proposition \ref{prop:ram} together with the generalized $abc$ inequality over function fields \cite[Theorem B]{Brownawell1986}. Lastly, in Section \ref{sec:appl}, as an application of Theorem \ref{th:main}, we prove Theorem \ref{th:mainP2} and give an explicit example in the case where the base curve is $\PP^1$. 

    
    \subsection{Connections with Vojta's conjectures} \label{sec:general_Vojta}
    A celebrated conjecture in diophantine geometry, proposed by Vojta \cite[Conjecture 3.4.3]{Vojta}, predicts an arithmetic analogue of a (conjectural) higher dimensional Second Main Theorem in Nevanlinna Theory. The conjecture can be extended to describe the distribution of algebraic points on a non-singular projective variety $X$ defined over a number field $k$ and a normal crossings divisor $D$ on $X$. In the stronger form with truncation the conjecture reads as follows:

\begin{conjecture}[See {\cite[Conjecture 24.3]{Vojta2}}]
  \label{conj:LV}
  Let $X,D,k$ as above. Let $S$ be a finite set of places of $k$ containing the Archimedean ones, let $K_X$ be the canonical divisor of $X$, let $A$ be an ample divisor on $X$ and let $r$ be a positive integer. Then, for every $\varepsilon > 0$, there exists a proper Zariski closed subset $W = W(k,S,X,D, A,\varepsilon,r)$ of $X$ such that the inequality
  \begin{equation*}
     h_{K_X + D}(x) -\varepsilon h_A(x) \leq d_k(x) + N_S^{(1)}(D,x) + O(1)
  \end{equation*}
  holds for almost all $x \in (X \setminus W)\left (\overline{k} \right)$ with $[k(x) : k] \leq r$.
\end{conjecture}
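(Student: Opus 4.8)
The plan is to attack Conjecture \ref{conj:LV} through the arithmetic analogue of Nevanlinna theory, following the circle of ideas initiated by Corvaja and Zannier, since the statement in full generality is presently far out of reach: only fragmentary cases are known (curves, closed subvarieties of semiabelian varieties, and the surface situations recorded above, of which the present paper handles a further family over function fields). First I would dispose of the truncation-free weak form. Reduce the inequality $h_{K_X+D}(x) - \varepsilon h_A(x) \le d_k(x) + N_S(D,x) + O(1)$ to a statement about $S$-integral points on the quasi-projective variety $X\setminus D$: split the support of $D$ into the places lying in $S$ and those outside $S$, pass to the number field $k(x)$, and absorb the discriminant term $d_k(x)$ using the Riemann--Hurwitz inequality together with Vojta's comparison of $d_k(x)$ with heights under the hypothesis $[k(x):k]\le r$, enlarging $S$ compatibly. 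The remaining core is to bound $h_{K_X+D}$ on the resulting set of $S$-integral points.

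For that core step the strategy is to replace $X$ by a blow-up on which $D$ is simple normal crossings, embed by a large multiple $n(K_X+D)$ (legitimate since log general type means $K_X+D$ is big), and construct, at each place $v\in S$ and each putative point of accumulation, a filtration of $\HH^0(X,\calO(n(K_X+D)))$ adapted to the local intersection of the point with the components of $D$. Summing the resulting lower bounds for vanishing along $D$ over a basis yields a linear form in the Weil functions to which Schmidt's Subspace Theorem applies --- in Vojta's geometric formulation, or Evertse--Schlickewei over number fields, together with its extension to bounded-degree algebraic points for the $[k(x):k]\le r$ version --- forcing the points into a proper closed subset $W=W(k,S,X,D,A,\varepsilon,r)$. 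The technical heart is to choose the filtration so that the excess order along $D$ dominates $K_X+D-\varepsilon A$; this is precisely where bigness of $K_X+D$ and, in dimension two, structural results on log general type pairs enter, and where in higher dimension one currently has no uniform construction.

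The truncation refinement --- replacing $N_S(D,x)$ by $N_S^{(1)}(D,x)$ --- is the genuinely hard part, and I do not expect to close it: it amounts to controlling the multiplicities with which $x$ meets $D$, equivalently a greatest-common-divisor estimate of the strength of the Corvaja--Zannier $\gcd$ theorems but uniform in $x$ and sharp enough to cost only $\varepsilon h_A(x)$. One would try to run the subspace-theorem argument a second time, on the arc/jet space of $X$ along $D$ or on a ramified cover engineered to detect multiplicities, so as to bound the thick part $N_S(D,x)-N_S^{(1)}(D,x)$ separately; but the available subspace theorems carry no truncation, so this step is at present only heuristic. The main obstacles, in increasing order of severity, are: (i) making $W$ effective, which even in the cases proved here requires the passage to function fields and the explicit $abc$ inequality; (ii) the higher-dimensional analogue of the linear-system construction; and (iii) the truncation itself, for which no general method is known --- it is essentially a repackaging of the full conjecture.
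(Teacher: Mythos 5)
The statement you were asked about is not a theorem of this paper at all: it is Vojta's general conjecture with truncated counting functions (quoted from Vojta), stated in the introduction only as motivation. The paper contains no proof of it --- over number fields it is wide open except for curves, subvarieties of semiabelian varieties, and scattered special cases --- and what the paper actually proves are function-field analogues for a restricted class of surfaces (ramified covers of $\G_m^2$), via $S$-unit equations, the explicit $abc$/Brownawell--Masser inequality and a gcd-type bound for polynomials evaluated at $S$-units, not via any Subspace-Theorem filtration argument. So there is nothing in the paper against which your argument could be matched, and your text, by its own admission, is a research program rather than a proof: you explicitly leave open the truncation step (replacing $N_S(D,x)$ by $N_S^{(1)}(D,x)$), which you correctly identify as essentially equivalent to the full conjecture, and the higher-dimensional filtration construction.

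Beyond the openly conceded gaps, two of your intermediate reductions are themselves not available in the strength you need. First, ``absorbing'' $d_k(x)$ for points of bounded degree $[k(x):k]\le r$ by passing to $k(x)$ and enlarging $S$ is exactly the content of the algebraic-points version of the conjecture; there is no Riemann--Hurwitz or Chevalley--Weil type argument that reduces it to the rational-point case, and the known Subspace-Theorem machinery (Evertse--Schlickewei, Evertse--Ferretti, Autissier, Levin, Corvaja--Zannier) yields, even for rational points on the special surfaces where it applies, only the untruncated inequality with an exceptional set that is not controlled as in the conjecture. Second, the filtration step ``choose the filtration so that the excess order along $D$ dominates $K_X+D-\varepsilon A$'' is precisely what fails in general: bigness of $K_X+D$ alone is not known to produce sections with the required vanishing profile, which is why the unconditional results are confined to very special pairs (e.g.\ $D$ with many components, or the $\G_m^2$-cover situation treated here, where the problem is converted into gcd estimates for $S$-units). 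In short, the proposal is an accurate survey of the state of the art, but it is not a proof, and it should not be presented as one.
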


In the statement of the conjecture, $d_k(x)$ denotes the \emph{logarithmic discriminant} of the point $x$ (see \cite[Definition 23.1]{Vojta2}) and $N_S^{(1)}(D,x)$ denotes the \emph{truncated counting function} (see \cite[Definition 22.4]{Vojta2}). Conjecture \ref{conj:LV} has a wide range of important consequences; we mention for example the \emph{abc} conjecture of Masser-Oesterl\'e \cite[Conjecture 3]{abc}, the Bombieri-Lang conjecture \cite[Conjecture F.5.2.1]{HindrySilverman} and the Lang-Vojta conjecture \cite[Conjecture F.5.3.6]{HindrySilverman}. We refer to \cite{ABT,AdVT,AKnotes,Jnotes} for further consequences.

\medskip

In this paper we deal with the function field case of Conjecture \ref{conj:LV}, i.e. when the number field $k$ is replaced by the function field $\kappa(\calC)$ of a non-singular projective curve over an algebraically closed field $\kappa$ of characteristic zero. We point out that Theorem \ref{th:main} is related to the function field analogue of Conjecture \ref{conj:LV}. To see this, consider $(X,D)$ as in Conjecture \ref{conj:LV}: points $x \in X(k)$ correspond over function fields to sections $\sigma: \calC \to \calX$, for a projective model $\calX$ of the variety $X$. Similarly, the height bound corresponds to a degree bound for the image $\sigma(\calC)$, while the contributions of the discriminant and the truncated counting function correspond to the Euler characteristic $2g(\calC) - 2 + \# S$.

We also note that, in Theorem \ref{th:main} 
the exceptional set $W$ does not appear explicitly. Moreover, our result shows that a degree bound holds also for sections for which a bound as in Conjecture \ref{conj:LV} does not hold. On the other hand, our theorem contains a constant $C$ that does not appear in Vojta's formulation. For further discussion on the exceptional set we refer to \cite[Section 3]{RTW}.
\medskip
 

The reader can find a detailed analysis of the case $X = \PP^\ell$, $\calC = \PP^1$ and $S = \{0, \infty \}$ in \cite[Section 5]{CLZ19}, where the Nevanlinna analogue is also discussed. Moreover in \cite{CLZ19} the authors obtain new cases of Conjecture \ref{conj:LV} for rational points in higher dimensions, adopting a function field version of the method introduced in \cite{Levin2018} (which in turn extended \cite{CZ05}). These results can be seen as higher dimensional cases of \cite[Theorem 2]{CZGm}, therefore we expect that the methods of the present paper can be further generalized to give higher dimensional analogues of Theorem \ref{th:main}.

    \subsection*{Acknowledgements}
    We thank Kenny Ascher, Lucas Braune, Pietro Corvaja, Kristin DeVleming, Carlo Gasbarri, Ariyan Javanpeykar, S\'andor Kov\'acs, Aaron Levin, Siddarth Mathur and Julie Wang for useful conversations. This paper was partly written during visits of the two authors to the Department of Mathematics of University of Washington, the Mathematical Institute of University of Oxford and the Politecnico of Torino: we thank all the institutions for providing an excellent working environment. Research of Capuano was partly supported by funds from EPSRC EP/N008359/1. Research of Turchet was supported in part by funds from NSF grant DMS-1553459 and by Centro di Ricerca Matematica Ennio de Giorgi. Both authors are members of the GNSAGA group of INDAM.

    \section{Setting and Notations}

    \subsection{Function fields.} In this paper we will denote by $\calC$ a nonsingular projective curve (integral, separated scheme of finite type of dimension 1) defined over an algebraically closed field $\kappa$ of characteristic zero and by $S$ a finite set of (closed) points of $\calC$. We will denote by $\calO_S$ the ring of $S$-integers, i.e. the ring $\kappa[\calC \setminus S]$ of regular functions in the complement of $S$: its elements are rational functions on $\calC$ with poles contained in $S$. Similarly, we will denote by $\calO_S^*$ the group of $S$-units, i.e. the group of invertible elements of $\calO_S$: its elements are rational functions on $\calC$ with both zeros and poles contained in $S$. If $g(\calC)$ is the genus of the curve $\calC$, then the (negative) Euler characteristic of the affine curve $\calC \setminus S$, denoted by $\chi_S(\calC)$, is defined as
    \[
      \chi_S(\calC) := \chi(\calC \setminus S) = 2 g(\calC) - 2 + \# S.
    \]
Notice that if we have at least a nonconstant $S$-unit, then the cardinality of $S$ is at least $2$ hence $\chi_{S}(\calC)$ will always be non negative.

    For any rational function $a \in \kappa(\calC)$ we denote by $H_\calC(a)$, or simply by $H(a)$ when the reference to the curve is clear, the height of $a$, i.e. its degree as a morphism to $\PP^1$. This is equivalent to the usual definition of Weil Height via valuations as follows: every point $P \in \calC$ induces a discrete valuation of the field $\kappa(\calC)$, trivial on $\kappa$, that can be normalized such that its value group is $\Z$. We denote by $\ord_P$ the corresponding valuation on $\kappa(\calC)$. Then, the height of a function $a \in \kappa(\calC)$ can be expressed as
    \[
      H_\calC(a) = \sum_{P \in \calC} \max \{ 0, \ord_P(a) \}=-\sum_{P \in \calC} \min \{ 0, \ord_P(a) \} .
    \]
    If $\calE$ is a nonsingular projective curve and $\calE \to \calC$ is a dominant morphism (corresponding to an inclusion $\kappa(\calC) \subseteq \kappa(\calE)$), then the heights of a rational function $a \in \kappa(\calC)$, with respect to $\calC$ and $\calE$, verify
\begin{equation*}
      H_\calE(a) = [\kappa(\calE):\kappa(\calC)] H_\calC(a).
\end{equation*}
For $n\ge 2$ and elements $a_1, \ldots, a_n \in \kappa(\calC)$, we denote by $H(a_1: \cdots : a_n)$ the projective height
\begin{equation*}
H(a_1: \cdots :a_n)=-\sum_{P\in \calC} \min\{\ord_P(a_1), \ldots, \ord_P(a_n)\}.
\end{equation*}    
Given a polynomial $F \in \kappa(\calC)[X_1,\dots,X_n]$, the height of $F$, denoted by $H_{\calC}(F)$, will always be the maximum of the heights of its coefficients. For more details about heights we refer to \cite{Vojta2}. \\
    
An important tool over function fields is the presence of derivations. Following \cite{CZConic}, we will fix a differential form on $\calC$ in order to define the ``derivative'' of a rational function using the following lemma.

    \begin{lemma}[{\cite[Lemma 3.5]{CZConic}}] \label{lem:deriv} 
       Given a nonsingular projective curve $\calC$ of genus $g$ and a finite set of points $S \subset \calC$, there exist a meromorphic differential form $\omega \in \calC$ and a finite set $T \subset \calC$ such that $\#T = \max \{ 0, 2g -2 \}$ and, for every $u \in \calO_S^*$, there exists an $(S \cup T)$-integer $ \theta_{u} \in \calO_{S \cup T}$ having only simple poles such that
\begin{equation*}
  \frac{d(u)}{u} = \theta_{u} \cdot \omega \qquad \text{ and } \qquad H_{\calC}( \theta_{u}) \leq \chi_{S}(\calC).
\end{equation*}
    \end{lemma}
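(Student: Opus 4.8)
The plan is to fix once and for all a single nonzero meromorphic differential form $\omega$ on $\calC$ and to \emph{define}, for every $u \in \calO_S^*$, the rational function $\theta_u := (du/u)/\omega \in \kappa(\calC)$; the content of the lemma is then that, for a sufficiently clever choice of $\omega$, the function $\theta_u$ has only simple poles, lies in $\calO_{S\cup T}$ for a fixed finite set $T$ with $\#T = \max\{0,2g-2\}$, and satisfies $H_\calC(\theta_u)\leq \chi_S(\calC)$. The only local input is the standard behaviour of the logarithmic derivative: if $n=\ord_P(u)$ and $t$ is a local parameter at $P$, writing $u=t^n w$ with $w$ a unit of $\calO_{\calC,P}$ gives $\frac{du}{u}=n\frac{dt}{t}+\frac{dw}{w}$, and $\frac{dw}{w}$ is regular at $P$. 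Since $\kappa$ has characteristic $0$, $n\neq 0$ implies $n\neq 0$ in $\kappa$, so $\frac{du}{u}$ has a pole of order \emph{exactly} one at $P$ when $\ord_P(u)\neq 0$ and is regular at $P$ otherwise. Equivalently, $\divv(du/u)+\sum_{P\in\supp(\divv u)}P\geq 0$, and $\supp(\divv u)\subseteq S$ because $u$ is an $S$-unit.

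Next I would choose $\omega$ so that: (i) the polar divisor of $\omega$ is supported in $S$; (ii) the zero divisor of $\omega$ is reduced and disjoint from $S$; (iii) the degree of the zero divisor of $\omega$ equals $\max\{0,2g-2\}$. Such an $\omega$ exists in every genus. If $g=0$, pick two distinct points $P_1,P_2\in S$ and let $\omega=d\lambda/\lambda$ for a rational coordinate $\lambda$ with $\lambda(P_1)=0$ and $\lambda(P_2)=\infty$, so that $\divv(\omega)=-P_1-P_2$ and the zero divisor is empty. If $g\geq 1$, I take a holomorphic differential $\omega$ whose zero divisor is reduced and avoids $S$: for $g=1$ the unique (up to scalar) holomorphic differential is nowhere vanishing, so this is automatic; for $g\geq 2$ the canonical system $|K_\calC|$ is base-point-free of dimension $g-1\geq 1$, so by Bertini's theorem in characteristic $0$ its general member is a reduced divisor ($2g-2$ distinct points), while the effective canonical divisors through a given point of $S$ form a proper linear subsystem, and a finite union of proper subspaces of the infinite linear system $|K_\calC|$ cannot exhaust it, so some $\divv(\omega)$ is reduced and disjoint from $S$. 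In all cases set $T:=\supp(\text{zero divisor of }\omega)$; reducedness gives $\#T=\max\{0,2g-2\}$, and $T\cap S=\emptyset$.

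Finally I would read off $\divv(\theta_u)=\divv(du/u)-\divv(\omega)$ pointwise. If $P\notin S\cup T$, then $\ord_P(\omega)=0$ by (i) and the definition of $T$, and $\ord_P(du/u)\geq 0$, so $\ord_P(\theta_u)\geq 0$. If $P\in T$, then $P\notin\supp(\divv u)$, so $\ord_P(du/u)\geq 0$ while $\ord_P(\omega)=1$, whence $\ord_P(\theta_u)\geq -1$. If $P\in S$, then $\ord_P(\omega)\leq 0$ by (ii) and $\ord_P(du/u)\geq -1$, so $\ord_P(\theta_u)\geq -1$, with equality only when $\omega$ is regular and nonvanishing at $P$. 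Hence $\theta_u\in\calO_{S\cup T}$ has only simple poles, and by the formula for $H_\calC$ recalled above its height equals the number of these simple poles, which is at most $\#\supp(\divv u)+\#T\leq \#S+\max\{0,2g-2\}$; for $g\geq 1$ this equals $\chi_S(\calC)$, and for $g=0$ the points $P_1,P_2$ are poles of $\omega$, hence \emph{not} poles of $\theta_u$, so the poles of $\theta_u$ lie in $S\setminus\{P_1,P_2\}$ and $H_\calC(\theta_u)\leq \#S-2=\chi_S(\calC)$. (If $\calO_S^*$ consists only of constants there is nothing to prove, as then $\theta_u\equiv 0$.) I expect the main obstacle to be the simultaneous control of the zeros and poles of $\omega$ — producing, for $g\geq 2$, a canonical differential whose zeros are all simple and avoid $S$, which is exactly where Bertini enters — together with the order-of-vanishing bookkeeping in the genus-zero case, where $\omega$ is forced to have poles and one must verify that these poles of $\omega$ precisely cancel the corresponding simple poles of $du/u$.
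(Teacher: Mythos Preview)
The paper does not prove this lemma; it is stated with a citation to \cite[Lemma 3.5]{CZConic} and used as a black box. Your argument is correct and is essentially the standard one: fix $\omega$ with poles in $S$ and a reduced zero divisor $T$ disjoint from $S$ (holomorphic with Bertini-generic zeros for $g\geq 2$, nowhere vanishing for $g=1$, $d\lambda/\lambda$ for $g=0$), then read off $\divv(\theta_u)=\divv(du/u)-\divv(\omega)$ using that $du/u$ has at worst simple poles, all supported on $\supp(\divv u)\subseteq S$. The genus-zero bookkeeping, where the two forced poles of $\omega$ at $P_1,P_2\in S$ cancel the potential poles of $\theta_u$ there and recover the bound $\#S-2$, is handled correctly.
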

    In the rest of the paper the form $\omega$ will be fixed (compatibly for every finite cover $\calE \to \calC$) and, for $a \in \kappa(\calC)$, we will denote by $a'$ the rational function that satisfies $d(a) = a' \cdot \omega$. With this notation, the rational function $\theta_u$ appearing in the previous lemma is equal to $u'/u$.
 
    \subsection{Surfaces over function fields and fibered threefolds}
    The main focus of this paper is non-isotrivial surfaces defined over function fields and their models. We recall here the main definitions, fixing notations and terminology.
    
    \begin{definition}
      \label{def:model}
      Given a projective variety $X$ of dimension $n$ defined over the function field $\kappa(\calC)$, a (proper) \emph{model} $\calX$ of $X$ over $\calC$ (or over $\kappa(\calC)$) is the datum of a proper flat map $\rho: \calX \to \calC$ and an isomorphism $\calX \times_\calC \Spec \kappa(\calC) \isom X$.
  \end{definition}

From this definition it follows that the model of a surface $X$ over the function field $\kappa(\calC)$ is a fibered threefold $\calX \to \calC$. 
We note that, in Definition \ref{def:model}, the model $\calX$ can be singular. In this paper we will always restrict to the case in which the total space $\calX$ has only mild singularities: in particular, we will consider only models of nonsingular surfaces that are normal.
    
When dealing with an affine variety $Y$, we will identify it with a pair $(X,D)$, where $X$ is a projective variety, $D$ is a normal crossings divisor and $X \setminus D \cong Y$. Even if this identification is not unique, in this paper we will use the language of pairs, since it is more natural from the geometric point of view. Moreover, when $Y$ is a nonsingular affine surface, one can always consider a canonical choice for $(X,D)$, namely a minimal log resolution. In this latter case, the pair $(X,D)$ can be chosen to be \emph{log smooth}, i.e. $X$ is nonsingular and $D$ has simple normal crossings singularities.

\begin{definition}
  Given a pair $(X,D)$, a \emph{model} of $(X,D)$ is a model $\rho: \calX \to \calC$ of the projective variety $X$ over $\calC$ together with a model of $D$ whose total space is a Cartier divisor $\calD$ of $\calX$. We view the model as a family of pairs $(\calX,\calD) \to \calC$. Given an integral affine variety $Y$, a model of $Y$ is a model of the corresponding pair $(X,D)$.
\end{definition}
    
    Similarly as before we will restrict to the case in which the model of a log smooth pair $(X,D)$ has only mild singularities: in particular, we will consider models of a log smooth pair that have log canonical singularities. This still implies that the total space of $\calX$ is normal, but takes into account the presence of the divisor $D$. We refer to \cite[Chapter 2]{singmmp} for the precise definition and properties of log canonical singularities.

    In the setting of Conjecture \ref{conj:LV}, we are interested in affine surfaces $Y$ of log general type.
    
    \begin{definition}
      An affine variety $Y$ is of \emph{log general type} if for a(ny) log resolution $(\widetilde{Y},E)$ of $Y$, the log canonical divisor $K_{\widetilde{Y}} + E$ is big. This property is independent of the choice of the log resolution. If we identify $Y$ with $(X,D)$, we say that the pair is of log general type if $X \setminus D \cong Y$ is of log general type.
    \end{definition}

     We note that being of log general type does not extend naturally to models. Indeed, if $(\calX,\calD) \to \calC$ is a model of a pair $(X,D)$ over the function field $\kappa(\calC)$, the fact that $(X,D)$ is of log general type does not imply in general that the log canonical divisor of the total space $(\calX,\calD)$ is big.\medskip

     Finally we define non-isotrivial pairs: these are pairs that cannot be trivialized after a finite base change, i.e. there exists no finite base change $\calE \to \calC$ such that the base changed pair is isomorphic to a product $(X',D') \times_{\Spec \kappa} \Spec \kappa(\calE)$, for a pair $(X',D')$ defined over $\kappa$.

     In the case in which the fibers of the model $(\calX,\calD)$ have ample log canonical and mild singularities, being non-isotrivial is equivalent to require that the the moduli map $\calC \to \overline{\calM}_h$ to the KSBA moduli space of stable pairs $\overline{\calM}_h$ is not constant. 

    \subsection{Threefolds dominating $\PP^2 \times \calC$}\label{sec:3folds}
    In this article we consider models of non-isotrivial pairs $(X,D)$ over the function field $\kappa(\calC)$ where $X \setminus D$ is a ramified cover of $\G_m^2$. These correspond to fibrations of the form $\rho: (\calX,\calD) \to \calC$, together with dominant maps $\pi: \calX \to \PP^2 \times \calC$, that restrict to finite maps in the complement of $\calD$. 

    We note that in practice the threefold $\calX$ will be given by a dominant \emph{rational} map \linebreak $\pi: \calX \dashrightarrow \PP^2 \times \calC$ whose indeterminacy locus is contained in $\calD$. Moreover, the irreducible components of the indeterminacy locus have dimension at most 1, and their images under the map $\rho$ are finite sets of points of $\calC$ (and contained in the set $S$). Resolving the indeterminacy of $\pi$ gives a threefold $\calX'$ and a morphism $\pi': \calX' \to \PP^2 \times \calC$ that coincides with $\pi$ in the complement of $\calD$. Moreover, every section $\sigma: \calC \to \calX$ extend to a section $\sigma': \calC \to \calX'$. Therefore we can always assume that $\pi$ is a morphism, up to resolving the indeterminacy locus and replacing it with $\pi'$, since this will affect neither the map in the complement of $\calD$ nor the sections that we will consider.
    \medskip
    
\noindent We denote by $Z$ the closure of the ramification divisor of the restriction $\pi\restriction_{\calX\setminus \calD}$. We list here the conditions that will be assumed in the rest of the paper.
 \begin{itemize}
 \item $(X,D)$ is a log smooth non-isotrivial pair of log general type over the function field $\kappa(\calC)$ such that $X\setminus D$ is a ramified cover of $\G_m^2$; 
 \item $\rho: (\calX,\calD) \to \calC$ is a log canonical model of $(X,D)$, in particular $\calX$ is normal;
   \item the fibers of $\calD \to \calC$ have simple normal crossings singularities outside of $S$;
   \item the map $\pi$ is compatible with the fibration, i.e. $pr_2 \circ \pi = \rho$, where $pr_2 : \PP^2 \times \calC \to \calC$ is the second projection;
   \item on every fiber of $\rho$ outside of $S$, the map $\pi$ restricts to a finite dominant map to $\G_m^2$ in the complement of $\calD$ and the restriction of the divisor $\calD$ is the pull-back of the boundary divisor $\PP^2 \setminus \G_m^2$;
   \item on every fiber of $\rho$ outside of $S$, the image of the ramification divisor $Z$ of the restriction of $\pi$ avoids the singular points of the boundary of $\G_m^2$.
 \end{itemize}
The situation is made explicit in the following diagram:
    \[
      \xymatrix{ (\calX,\calD) \ar[d]_\rho \ar[r]^\pi & \PP^2 \times \calC \ar[dl]^{pr_2} \\ \calC \ar@/^2.0pc/[u]^\sigma & }
    \]

%
    In the case in which the generic fiber $(X,D)$ of $\rho$ is of log general type, given a projective embedding $\varphi$ of $\calX$, one expects the existence of a constant $C = C(X,D,\pi,\varphi)$ such that for every section $\sigma: \calC\setminus S \to \calX\setminus \calD$ one has
    \[
      \deg_\varphi(\sigma(\calC)) \leq C \max\{1, \chi_S(\calC) \},
    \]
    where $\deg_\varphi$ denotes the corresponding degree in the projective space where $\calX$ is embedded. Note that different embeddings give rise to different constants but the existence of the bound is independent of the choice of the embedding.  For this reason we will drop the explicit dependence on $\varphi$ and we will assume that all bounds depend on the choice of the embedding.\medskip

\section{Multiplicative Dependence between $S$-units}
\label{sec:depend}

In order to prove Theorem \ref{th:main} we obtain height bounds that yield the degree bound predicted by the conjecture. Recall that, in the setting of Section \ref{sec:3folds}, a section $\sigma: \calC \setminus S \to \calX \setminus \calD$, or equivalently a section $\sigma: \calC \to \calX$ such that $\supp (\sigma^*\calD) \subseteq S$, corresponds to a $D$-integral point in $X(\calO_S) = X(\kappa[\calC \setminus S])$; similarly, a map $\pi \circ \sigma: \calC \to \PP^2 \times \calC$ such that $pr_2 \circ (\pi \circ \sigma) = \id_\calC$ corresponds to a $S$-unit point $(u,v) \in \G_m^2(\calO_S^*)$, where $u,v$ are rational functions on $\calC$ with zeros and poles contained in $S$. The heights of $u$ and $v$ and the degree of $\sigma(\calC)$ are strictly related, since a bound on the heights of $u$ and $v$ gives a bound on the degree of the image $\sigma(\calC)$. On the other hand, a bound on the degree of the image does not guarantee that the heights of $u$ and $v$ are bounded, since, for example $u$ and $v$ might be multiplicatively dependent (we refer to \cite[Section 2]{CZGm} for a detailed discussion). We focus on this latter case in this section. 
\medskip

 Let $A(X,Y) \in \kappa(\mathcal C)[X,Y]$ be an irreducible polynomial of the form
\[
  A(X,Y)= \sum_{i+j \leq \deg A} \lambda_{ij} X^i Y^j, 
\]
where $\deg A := \deg_X A + \deg_Y A$. Let $u_1, u_2 \in \kappa(\calC)$ be nonzero rational functions and let $B(X,Y)\in \kappa(\calC)[X,Y]$ be defined, in terms of $A(X,Y), u_1, u_2$ as
\[
B(X,Y)= \frac{u_1'}{u_1} X \frac{\partial}{\partial X} A(X,Y) + \frac{u_2'}{u_2} Y \frac{\partial}{\partial Y} A(X,Y)+ \sum_{i+j \leq \deg A} \lambda_{ij}' X^iY^j. 
\]
Note that the derivative of $A(u_1,u_2)$ coincide with $B(u_1,u_2)$.\medskip

In this section we derive a dependence relation for $S$-units $u_1$ and $u_2$ assuming they satisfy a relation of the form $(u_1/\alpha)^r(u_2/\beta)^s = \mu$ for some constant $\mu \in \kappa^\times$ and $\alpha,\beta$ roots of $A$ and $B$. In general, we cannot expect $u_1$ and $u_2$ to satisfy the conclusion of \cite[Lemma 3.14]{CZConic}, i.e. $u_1^r u_2^s = \mu'$, for some $\mu' \in \kappa^\times$; instead we prove that there exists a fixed $S$-unit $\gamma$, independent of $u_1$ and $u_2$, such that $u_1^r u_2^s = \gamma$. This will be sufficient for the applications in this paper.

\begin{lemma} \label{lem:314}
	Let $A$ and $B$ as before and let $(\alpha, \beta)$ be a common zero of $A$ and $B$ in $\kappa(\calC)$. If $u_1 /\alpha$ and $u_2 /\beta$ satisfy a multiplicative dependence relation of the form
\begin{equation} \label{m.relation}
  \Big( \frac{u_1}{\alpha} \Big)^r \Big( \frac{u_2}{\beta} \Big)^s = \mu,
\end{equation} 
for a suitable pair of nonzero integers $(r,s)\in \Z^2$ and a constant $\mu \in \kappa^\times$, then either one between $u_1/\alpha$ and $u_2/\beta$ is constant or there exists $\gamma \in \overline{\kappa(\lambda_{ij})}$ independent of $u_1, u_2$ such that $u_1^r u_2^s = \gamma$.
\end{lemma}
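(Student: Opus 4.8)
The starting point is to differentiate the dependence relation \eqref{m.relation}. Taking the logarithmic derivative of $(u_1/\alpha)^r(u_2/\beta)^s = \mu$ and using that $\mu$ is constant gives
\[
  r\left(\frac{u_1'}{u_1} - \frac{\alpha'}{\alpha}\right) + s\left(\frac{u_2'}{u_2} - \frac{\beta'}{\beta}\right) = 0.
\]
So the plan is to combine this with the two equations $A(\alpha,\beta)=0$ and $B(\alpha,\beta)=0$. Writing $B$ out explicitly, $B(\alpha,\beta)=0$ says
\[
  \frac{u_1'}{u_1}\,\alpha\,A_X(\alpha,\beta) + \frac{u_2'}{u_2}\,\beta\,A_Y(\alpha,\beta) + \sum_{i+j\le \deg A}\lambda_{ij}'\,\alpha^i\beta^j = 0,
\]
and since $A(\alpha,\beta)=0$ identically in the variable of $\calC$, differentiating that relation (via $d=(\,\cdot\,)'\omega$) yields $A'(\alpha,\beta) + \alpha'A_X(\alpha,\beta) + \beta'A_Y(\alpha,\beta)=0$, where $A'$ denotes the polynomial with the $\lambda_{ij}$ replaced by $\lambda_{ij}'$. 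Subtracting, the two identities combine to produce a linear relation purely among $\alpha'/\alpha$, $\beta'/\beta$ (i.e. $\theta_\alpha$, $\theta_\beta$) and $u_1'/u_1$, $u_2'/u_2$ with coefficients that are polynomial expressions in $\alpha,\beta$ and the $\lambda_{ij},\lambda_{ij}'$.

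The key idea is then a dimension count / elimination argument: we have (at least) two linear relations among the four logarithmic derivatives $\theta_{u_1}, \theta_{u_2}, \theta_\alpha, \theta_\beta$, the first with constant coefficients $(r,s)$ coming from \eqref{m.relation} and the second coming from $A=B=0$. If these two relations are independent, one can solve to express $\theta_{u_1}$ and $\theta_{u_2}$ as fixed rational functions of $\alpha,\beta$ (and the coefficients of $A$), hence $u_1'/u_1$ and $u_2'/u_2$ lie in the fixed field $\overline{\kappa(\lambda_{ij})}(\alpha,\beta)$, which is itself a finite extension of $\kappa(\calC)$ independent of the particular pair $(u_1,u_2)$. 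From $r\,\theta_{u_1} + s\,\theta_{u_2}$ being a fixed element $\psi$ of this field — in fact $\psi = (u_1^r u_2^s)'/(u_1^r u_2^s)$ — one concludes that $u_1^r u_2^s$ is determined up to a multiplicative constant by $\psi$; and since $u_1^r u_2^s$ is an $S$-unit while $\psi$ ranges over a fixed finite set of possibilities (the coefficients of $A,B$ being fixed), one gets $u_1^r u_2^s = \gamma$ for a $\gamma$ depending only on $A$, as desired. The degenerate case — when the two linear relations are dependent — forces a relation of the form $r\,\theta_\alpha + s\,\theta_\beta = $ (something), or collapses so that $\theta_{u_1}/\theta_{u_2}$ is the constant $-s/r$; tracing through, this is exactly the situation where $u_1/\alpha$ or $u_2/\beta$ must be constant (an $S$-unit with vanishing logarithmic derivative is constant), which is the first alternative in the statement.

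The main obstacle I expect is bookkeeping in the degenerate case: one must carefully check that every way the elimination can fail genuinely forces constancy of $u_1/\alpha$ or $u_2/\beta$, rather than some third possibility, and in particular that $\alpha,\beta$ being a \emph{common} zero of $A$ and $B$ (not just of $A$) is used essentially — this is presumably why $B$ was constructed as the derivative of $A(u_1,u_2)$ in the first place. A secondary technical point is ensuring $\gamma$ really is independent of $u_1,u_2$: this should follow because the only data entering the final expression for $\psi$ are $\alpha$, $\beta$ and the $\lambda_{ij}$, and although $\alpha,\beta$ a priori depend on $u_1,u_2$, they are roots of the \emph{fixed} polynomials $A,B$, so there are only finitely many choices, giving a finite list of candidate $\gamma$'s all lying in $\overline{\kappa(\lambda_{ij})}$. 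One should also remark that $r,s$ may be taken coprime after dividing out a common factor, which does not affect the conclusion, so that the relation \eqref{m.relation} is primitive.
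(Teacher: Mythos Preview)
Your plan has the right opening moves (differentiate $A(\alpha,\beta)=0$, write out $B(\alpha,\beta)=0$, take the logarithmic derivative of \eqref{m.relation}, and subtract), but the conclusion you draw from them has a genuine gap. You claim at the end that ``$\alpha,\beta$ \ldots\ are roots of the \emph{fixed} polynomials $A,B$, so there are only finitely many choices, giving a finite list of candidate $\gamma$'s all lying in $\overline{\kappa(\lambda_{ij})}$.'' But $B$ is \emph{not} fixed: its coefficients involve $u_1'/u_1$ and $u_2'/u_2$, so the common zero set of $A$ and $B$ --- and hence $\alpha,\beta$ --- moves with the pair $(u_1,u_2)$. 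The paper's Remark immediately following the lemma flags exactly this: writing $u_1^r u_2^s=\mu\alpha^r\beta^s$ does not by itself give independence from $u_1,u_2$. Relatedly, your ``solve for $\theta_{u_1},\theta_{u_2}$ as fixed rational functions of $\alpha,\beta$'' step does not help, since the field $\overline{\kappa(\lambda_{ij})}(\alpha,\beta)$ is not independent of $(u_1,u_2)$ for the same reason.

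The missing idea is this. After subtraction you obtain a homogeneous $2\times 2$ system in the two quantities $P=\dfrac{u_1'}{u_1}\alpha-\alpha'$ and $Q=\dfrac{u_2'}{u_2}\beta-\beta'$ (not in four unknowns), namely $A_X(\alpha,\beta)\,P+A_Y(\alpha,\beta)\,Q=0$ and $r\beta\,P+s\alpha\,Q=0$. If the determinant is nonzero then $P=Q=0$, forcing both $u_1/\alpha$ and $u_2/\beta$ constant. If the determinant vanishes, you get $s\alpha A_X(\alpha,\beta)-r\beta A_Y(\alpha,\beta)=0$, i.e.\ $(\alpha,\beta)$ lies on the curve $A^*(X,Y):=sX A_X-rY A_Y=0$, and this polynomial depends only on $A,r,s$, \emph{not} on $u_1,u_2$. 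Now either $A$ and $A^*$ are coprime, so by B\'ezout $(\alpha,\beta)$ lies in a fixed finite set and $\gamma=\mu\alpha^r\beta^s\in\overline{\kappa(\lambda_{ij})}$; or $A^*=aA$ for some constant $a$, which forces $A$ to have a very special monomial structure (essentially a polynomial in $X^rY^s$ after accounting for the Euler-type relation $si-rj=a$), from which $\alpha^r\beta^s$ is again determined by the coefficients of $A$ alone. That case analysis on $A^*$ versus $A$ is the heart of the argument and is absent from your proposal.
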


\begin{proof} 
Assume that $u_1/\alpha$ and $u_2/\beta$ satisfy a multiplicative dependence relation of the form \eqref{m.relation}. If $(\alpha, \beta)$ is a singular point of $A(X,Y)$, then it is defined in $\overline{\kappa(\lambda_{ij})}$ independently of $u_1$ and $u_2$; so we can conclude with $\gamma = \mu \alpha^r \beta^s$. Therefore, we can assume that at least one between $\frac{\partial}{\partial X} A(\alpha, \beta)$ and $\frac{\partial}{\partial Y} A(\alpha, \beta)$ is non zero.

Notice moreover that, without loss of generality, we can assume that $r, s$ are coprime integers. If they are not, then we can write $r=r' d$ and $s=s' d$ with $(r', s')=1$; then, \eqref{m.relation} would imply that there exists $\mu' \in \kappa$ such that $\big(  \frac{u_1}{\alpha} \big)^{r'} \big(  \frac{u_2}{\beta} \big)^{s'}= \mu'$.
\medskip

Let us define 
\[
  \Gamma(X,Y):= \sum_{i+j\leq \deg A} \lambda_{ij}' X^i Y^j.
\]
Since $A(\alpha, \beta)=0$, taking differentials, we obtain
\begin{equation} \label{A'}
  \alpha' \frac{\partial}{\partial X} A(\alpha, \beta)+ \beta' \frac{\partial}{\partial Y} A(\alpha, \beta)+ \Gamma(\alpha, \beta)=0.
\end{equation} 
By definition of $\alpha$ and $\beta$, we have that $B(\alpha, \beta)=0$, i.e.
\begin{equation} \label{B}
  \frac{u_1'}{u_1} \alpha \frac{\partial}{\partial X} A(\alpha, \beta) + \frac{u_2'}{u_2} \beta \frac{\partial}{\partial Y} A(\alpha, \beta) + \Gamma (\alpha, \beta)=0.
\end{equation}
By \eqref{m.relation} we have a linear relation of the form 
\begin{equation} \label{lrel}
   r \left ( \frac{u_1'}{u_1} - \frac{\alpha'}{\alpha} \right ) + s \left ( \frac{u_2'}{u_2} - \frac{\beta'}{\beta} \right )=0.
\end{equation}
Taking the difference between \eqref{B} and \eqref{A'} and multiplying \eqref{lrel} by $\alpha \beta$, we obtain the system:
\begin{equation} \label{system}
  \begin{cases}
       \left ( \frac{u_1'}{u_1} \alpha - {\alpha'} \right ) \frac{\partial}{\partial X} A(\alpha, \beta) + \left ( \frac{u_2'}{u_2} \beta - {\beta'} \right ) \frac{\partial}{\partial Y} A(\alpha, \beta)=0 \\
       \left ( \frac{u_1'}{u_1} \alpha - {\alpha'} \right ) r \beta +  \left ( \frac{u_2'}{u_2} \beta - {\beta'} \right ) s \alpha=0 
  \end{cases}.
\end{equation} 
From \eqref{system} it is easy to see that either $u_1'/u_1 = \alpha'/\alpha$ and $u_2'/u_2 = \beta'/\beta$, which implies that both $u_1/\alpha$ and $u_2/\beta$ are constant, or we have
\begin{equation*}
  s\alpha \frac{\partial}{\partial X} A(\alpha, \beta) - r\beta \frac{\partial}{\partial Y} A(\alpha, \beta)=0 .
\end{equation*}
We define $ A^*(X,Y):=sX \frac{\partial}{\partial X} A(X, Y) - rY \frac{\partial}{\partial Y} A(X,Y)$; as by assumption $A(X,Y)$ is irreducible and $\deg A^* \leq \deg A$, we can have either that $({A}^*(X,Y), A(X,Y))=1$ or, there exists a constant $a \in \kappa$ such that ${A}^*(X,Y)=a A(X,Y)$. Let us analyze the two cases separately.

Suppose first that $({A}^*(X,Y), A(X,Y))=1$. We have that $(\alpha, \beta)$ is a common zero of the polynomials ${A}^*(X,Y)$ and $A(X,Y)$. As the two polynomials are coprime, Bezout's theorem ensures that the number of common solutions is finite and bounded by $(\deg A)^2$. Hence, $\alpha$ and $\beta$ are two rational functions in $\kappa(\calC)$ independent of $u_1$ and $u_2$, and \eqref{m.relation} can be rewritten as
\begin{equation*}
 u_1^r u_2^s= \mu {\alpha}^r {\beta}^s. 
\end{equation*}
Therefore $\gamma= \mu \alpha^r \beta^s $ is an algebraic function of the coefficients of $A$ as wanted.

Let us finally see what happens if ${A}^*(X,Y)=a A(X,Y)$ for some $a \in \kappa$. If $a = 0$, i.e. $A^*(X,Y)$ is identically zero, then the polynomial $A$ is of the form $\lambda_{hr\, hs}(X^r Y^s)^h + \lambda_{00}$ for a non-zero integer $h$. This implies that $\gamma = \mu \alpha^r \beta^s$ is independent of $u_1,u_2$ and $u_1^r u_2^s = \gamma$ as wanted.

On the other hand, if $a \neq 0$, one gets $\lambda_{00}=0$ and, for all $ i,j$ such that $1 \le i+j \le \deg A$ and $\lambda_{ij}\neq 0$, we have 
\begin{equation} \label{eq:rel}
si-rj=a.
\end{equation}
Moreover, as by assumption $A$ is irreducible, $A$ has both a monomial that contains only $X$ and a monomial that contains only $Y$, which implies that $a$ is a non-zero integer divisible by $rs$ (as we are assuming $r$ and $s$ coprime), $sa>0$ and $ra<0$.

Let us assume that $a>0$ (the other case is completely symmetric); then, we must have $s>0$ and $r<0$. From \eqref{eq:rel} and the fact that $a$ is divisible by $r$ and $s$, the polynomial $A$ will be of the form 
\begin{equation} \label{eq:A_form}
A(X,Y)=\sum_{j=0}^{m} \lambda_j X^{-r(m-j)} Y^{sj},
\end{equation}
for some $m>0$. As $A(\alpha, \beta)=0$, from \eqref{eq:A_form} we have
\[
\sum_{j=0} \lambda_j \left (\alpha^r \beta^s \right )^j=0,
\]
therefore $\gamma = \mu \alpha^r \beta^s$ is independent of $u_1, u_2$, and $u_1^r u_2^s = \gamma$. 
\end{proof}

\begin{remark}
We point out that, as $(\alpha, \beta)$ is a common zero of $A$ and $B$ where $B$ depends on $u_1$ and $u_2$ and their derivatives, writing $u_1^r u_2^s= \mu \alpha^r \beta^s$ does not directly give the desired conclusion. In the proof of the previous Lemma we however obtained that either $u_1/\alpha$ and $u_2/\beta$ are constant, or the quantity $\mu\alpha^r \beta^s$ is an algebraic function of the coefficients of $A$, and so it is independent of $u_1$ and $u_2$. In particular, if $u_1/\alpha$ and $u_2/\beta$ are not constant, given $r$, $s$ and $\mu$ there are only finitely many $\gamma \in \kc$ such that $u_1^r u_2^s=\gamma$.
\end{remark}

%
%
%
%
%

\section{Counting multiple zeros}
\label{sec:AB}
The goal of this section is to prove a bound for the number of multiple zeros of polynomials evaluated at $S$-units; this extends explicitly \cite[Theorem 1.2]{CZConic} to polynomials with nonconstant coefficients. Moreover, we give an explicit bound on the exponents of a multiplicative relation between the $S$-units, when the bound on the number of multiple zeros might not hold. 

Let $\calC$ be a smooth projective curve and let $S \subset \calC$ be a finite set of points. Let $A(X,Y) \in \kappa(\calC)[X,Y]$ be a polynomial without repeated factors. We write the polynomial as
\[
  A(X,Y) = \sum_{i+j \le \deg A} \lambda_{ij} X^i Y^j,
\]
where $\deg A = \deg_X A + \deg_Y A$ as before. Recall that we denote by $H_{\calC}(A)$ the height of the polynomial $A$, which is defined as the maximum of the heights of its coefficients. Then, we have the following result.

 \begin{theorem}
   \label{th:1.2}
   Let $\varepsilon > 0$ be a positive real number. Then, there exist constants $\Cone = \Cone(\deg A, H_{\calC}(A),$ $\varepsilon)$ and $\Crs = \Crs(\deg A, \varepsilon )$ such that, for all pairs $(u_1,u_2) \in (\calO_S^*)^2$ with
   \[
     \max \{ H_{\calC}(u_1), H_{\calC}(u_2)\} \geq \Cone \max \{ 1, \chi_S(\calC) \},
   \]
at least one of the following holds:
   \begin{itemize}
	   \item the $S$-units $u_1,u_2$ verify a relation of the form $u_1^r u_2^s = \gamma$ for a pair of integers $(r,s) \in \Z^2 \setminus \{(0,0)\}$ such that $\max \{ \lvert r \rvert, \lvert s \rvert \} \leq \Crs$ and $\gamma$ is an algebraic function of the coefficients of $A$ independent of $u_1$ and $u_2$;
	   \item the rational function $A(u_1,u_2)$ verifies
   \begin{equation}
     \sum_{P \in \calC \setminus S} \max \{ 0, \ord_P(A(u_1,u_2)) -1 \} \le \varepsilon \max\{ H_{\calC}(u_1),H_{\calC}(u_2) \}.
     \label{eq:th12}
   \end{equation}
   \end{itemize}
 \end{theorem}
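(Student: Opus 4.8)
The plan is to follow the strategy of \cite[Theorem 1.2]{CZConic}, tracking the extra terms coming from the nonconstant coefficients of $A$, and to invoke Lemma \ref{lem:314} at the point where a multiplicative dependence appears. Write $w := A(u_1,u_2)$ and let $B$ be the auxiliary polynomial associated to $A$, $u_1$, $u_2$ as in Section \ref{sec:depend}, so that $w' = B(u_1,u_2)$ (derivative taken with respect to the fixed form $\omega$). The starting observation is that if $P \in \calC \setminus S$ is a zero of $w$ of order $e \geq 2$, then $P$ is a zero of $w'$ of order $\geq e-1$, hence a common zero of $w$ and $w'$; so the left-hand side of \eqref{eq:th12} is at most the number (counted appropriately) of points of $\calC \setminus S$ at which both $A(u_1,u_2)$ and $B(u_1,u_2)$ vanish. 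The first main step is therefore to bound $\sum_{P \in \calC \setminus S}\min\{\ord_P(A(u_1,u_2)), \ord_P(B(u_1,u_2))\}$. This is controlled by the height of a resultant: eliminating $Y$ (say) between $A(X,Y)$ and $B(X,Y)$ produces a nonzero polynomial $R(X) = \operatorname{Res}_Y(A,B) \in \kappa(\calC)[X]$, unless $A$ and $B$ share a common factor, and the common zeros of $A(u_1,u_2)$, $B(u_1,u_2)$ in $\calC\setminus S$ force $u_1$ to be a zero of $R$. Estimating $H_\calC(R)$ in terms of $\deg A$, $H_\calC(A)$ and — crucially — $H_\calC(u_i'/u_i) \leq \chi_S(\calC)$ from Lemma \ref{lem:deriv} gives a bound of the shape $O(\deg A)\cdot H_\calC(A) + O(\deg A)\cdot \chi_S(\calC)$ for the height contribution, and then the standard argument bounding the number of zeros of $R(u_1)$ in terms of $\deg_X R \cdot \max\{H(u_1),H(u_2)\}$ wins provided $\max\{H(u_1),H(u_2)\}$ is large compared to $H_\calC(A)\cdot\max\{1,\chi_S(\calC)\}$ — this is where the hypothesis $\max\{H_\calC(u_1),H_\calC(u_2)\}\geq \Cone\max\{1,\chi_S(\calC)\}$ with $\Cone$ depending on $H_\calC(A)$ enters, and it yields the bound \eqref{eq:th12} for suitable $\Cone$.

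The second main step handles the degenerate case where the resultant argument fails, i.e. where $A(X,Y)$ and $B(X,Y)$ have a nontrivial common factor $C(X,Y)$. Since $A$ has no repeated factors and $\deg B \leq \deg A$, one analyzes $C$ directly: a common irreducible factor gives a curve $C(u_1,u_2)=0$, and differentiating forces the point $(u_1,u_2)$ (or rather a generic point of that curve) to be a common zero of $A$ and $B$ in an algebraic closure, so we are exactly in the situation of Lemma \ref{lem:314} — a common zero $(\alpha,\beta)$ of $A$ and $B$ satisfying a multiplicative relation $(u_1/\alpha)^r(u_2/\beta)^s=\mu$. Here one must produce the relation: on the curve $C=0$, writing $u_1,u_2$ as $S$-units parametrizing it and using that $C \mid A$ with $C \mid B$, the logarithmic-derivative identity shows that $u_1^{r}u_2^{s}/(\text{unit})$ has zero derivative for appropriate $(r,s)$ determined by $C$ — concretely $(r,s)$ is read off from the Newton polygon/monomials of $C$, so $\max\{|r|,|s|\} \leq \deg C \leq \deg A =: \Crs$. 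Lemma \ref{lem:314} then yields either that one of $u_1/\alpha,u_2/\beta$ is constant — in which case $u_1$ or $u_2$ has bounded height, contradicting the lower bound hypothesis once $\Cone$ is chosen larger than that bound, so this subcase does not occur — or that $u_1^r u_2^s=\gamma$ with $\gamma$ algebraic over $\kappa(\lambda_{ij})$ and independent of $u_1,u_2$, which is exactly the first alternative of the theorem.

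The step I expect to be the main obstacle is making the resultant height bound genuinely \emph{explicit} while correctly isolating the constants: one must check that $\Cone$ can be taken to depend only on $\deg A$, $H_\calC(A)$ and $\varepsilon$ (and not on $\calC$ or $S$ beyond $\chi_S(\calC)$), which requires care because $B$ itself involves the $u_i'/u_i$ whose heights grow with $\chi_S(\calC)$ — so the elimination has to be set up so that the $\chi_S$-contribution to $H_\calC(R)$ is linear in $\chi_S$ with a coefficient depending only on $\deg A$, allowing it to be absorbed into the right-hand side of \eqref{eq:th12} after dividing by $\max\{H(u_1),H(u_2)\}$. A secondary subtlety is the bookkeeping when passing from ``$(u_1,u_2)$ lies on the common-factor curve $C=0$'' to an honest multiplicative relation with controlled exponents: one needs $C$ to be, up to units and after the analysis above, essentially a polynomial in a single monomial $X^rY^s$, which is precisely what the case analysis inside the proof of Lemma \ref{lem:314} (the cases $A^*=aA$) is designed to extract, so the cleanest route is to feed the common factor into an argument parallel to that lemma rather than reproving it.
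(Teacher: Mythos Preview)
Your proposal has a genuine gap in what you call the ``first main step.'' Bounding
\[
\sum_{P \in \calC \setminus S}\min\{\ord_P(A(u_1,u_2)), \ord_P(B(u_1,u_2))\}
\]
by the number of zeros of the single resultant $R(u_1) = \operatorname{Res}_Y(A,B)(u_1)$ gives only a bound of order $(\deg_X R)\cdot H(u_1) + H_\calC(R)$, which is \emph{linear} in $\max\{H(u_1),H(u_2)\}$ with a fixed nonzero constant depending on $\deg A$. This never yields $\varepsilon\max\{H(u_1),H(u_2)\}$ for arbitrary $\varepsilon>0$, no matter how large the height is relative to $\chi_S(\calC)$. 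The paper's argument is substantially deeper here: one takes \emph{both} resultants $F(X)=\operatorname{Res}_Y(A,B)$ and $G(Y)=\operatorname{Res}_X(A,B)$, passes to a finite cover of $\calC$ over which they split, and then for each pair of roots $(\alpha,\beta)$ applies the Corvaja--Zannier gcd estimate \cite[Corollary 2.3]{CZConic} to the $V$-units $u_1/\alpha$ and $u_2/\beta$. In the multiplicatively independent case this gives a bound of the shape $H^{2/3}\chi^{1/3}$, which \emph{is} $o(H)$ once $H \gg \chi$; this sublinear exponent is the whole point, and your resultant-only argument does not access it.

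Your placement of Lemma~\ref{lem:314} is also off. In the paper, the case where $A$ and $B$ share a factor (your ``second main step'') is disposed of immediately after reducing to $A$ irreducible: then $B=aA$ and comparing coefficients of two distinct monomials directly gives $u_1^{i-h}u_2^{j-k}=\mu\lambda_{hk}\lambda_{ij}^{-1}$ --- no need for Lemma~\ref{lem:314} and no claim that ``$C(u_1,u_2)=0$'' (which does not follow from $C\mid A$ and $C\mid B$). Lemma~\ref{lem:314} enters instead in the \emph{coprime} case, precisely when the gcd estimate is applied and the pair $(u_1/\alpha_{\hat\imath},\, u_2/\beta_{\hat\jmath})$ turns out to be multiplicatively dependent modulo constants: there the gcd bound becomes $\max\{H(u_1),H(u_2)\}/\max\{|r|,|s|\}$, and Lemma~\ref{lem:314} is what upgrades the relation $(u_1/\alpha)^r(u_2/\beta)^s=\mu$ to $u_1^ru_2^s=\gamma$ with $\gamma$ depending only on the coefficients of $A$. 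Finally, the paper also reduces to irreducible $A$ at the outset and separately bounds the common zeros of distinct irreducible factors (Proposition~\ref{prop:A_red}); this decomposition is missing from your sketch.
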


\begin{proof}[Proof of Theorem \ref{th:1.2}]

We factor $A(X,Y)$ in irreducible polynomials in $\kappa(\calC)[X,Y]$ as
 \begin{equation} \label{eq:factorization}
   A(X,Y) = A_1(X,Y)A_2(X,Y)\cdots A_l(X,Y).
 \end{equation}

 We begin by noticing that, for every $i=\, \ldots, l$, we can enlarge the set $S$ to a set $S_i$ so that all the coefficients of $A_i$ are $S$-units. Moreover, the cardinality of the set $S_i$ is bounded by $\# S + 2(\deg A_i +1)^2 H_{\calC}(A_i)$. In particular, we can assume that we have extended the set $S$ so that every coefficient of a factor of $A$ is a $S$-unit, and the new set $S$ has cardinality bounded in terms of $\deg A$ and $H_{\calC}(A)$ (since the height of a factor $A_i$ is bounded in terms of the height of $A$; see e.g. \cite[Theorem 1.7.2]{BombieriGubler}). Therefore, from now on, we will assume that for every $i$, $A_i(X,Y) \in \calO_S^*[X,Y]$.\medskip

 We want to prove a bound for the number of multiple zeros of $A(u_1,u_2)$ in terms of $H_{\calC}(u_1)$ and $H_{\calC}(u_2)$. This is equivalent to bound the multiple zeros of every irreducible factor $A_i(u_1, u_2)$ together with a bound on the number of common zeros of $A_i(u_1, u_2)$ and $A_j(u_1, u_2)$ for every pair $1 \leq i < j \leq l$.
 
\subsection*{Number of multiple zeros of an irreducible polynomial} 

We begin by proving the bound for the number of multiple zeros of an irreducible polynomial, which is the content of the following proposition.

\begin{proposition}
  \label{prop:A_irred}
In the same setting as above, let $A(X,Y) \in \calO_S^*[X,Y]$ be an irreducible polynomial and let $\varepsilon > 0$ a positive real number. Then, there exist constants $\Ctwo = \Ctwo(\deg A, H_{\calC}(A),$ $\varepsilon)$ and $\Cirs~=~ \Cirs(\deg A, \varepsilon )$ such that, for every pair $(u_1,u_2) \in (\calO_S^*)^2$ with
   \[
     \max \{ H_{\calC}(u_1), H_{\calC}(u_2)\} \geq \Ctwo \max \{ 1, \chi_S(\calC) \},
   \]
at least one of the following holds:
   \begin{itemize}
	   \item the $S$-units $u_1,u_2$ verify a relation of the form $u_1^r u_2^s = \gamma$ for a pair of integers $(r,s) \in \Z^2 \setminus \{(0,0)\}$ such that $\max \{ \lvert r \rvert, \lvert s \rvert \} \leq \Cirs$ and $\gamma$ is an algebraic function of the coefficients of $A$ independent of $u_1, u_2$;
	   \item the rational function $A(u_1,u_2)$ verifies
   \begin{equation}
     \sum_{P \in \calC \setminus S} \max \{ 0, \ord_P(A(u_1,u_2)) -1 \} \le \varepsilon \max\{ H_{\calC}(u_1),H_{\calC}(u_2) \}.
	\label{eq:propA_irr}
\end{equation}
	\end{itemize}
\end{proposition}

\begin{proof}[Proof of Proposition \ref{prop:A_irred}]
\smallskip


 Note that, if $A(X,Y)$ does not depend on $X$ or on $Y$, a bound of the desired form is immediate. Therefore we will assume that $A$ depends nontrivally on both $X$ and $Y$.

Let us consider the polynomial $B(X,Y)$ given by:
\begin{equation} \label{eq:Bshape}
  B(X,Y) = \sum_{i+j \le \deg A} \lambda_{ij} X^i Y^j \left( i \frac{u_1'}{u_1} + j \frac{u_2'}{u_2} + \frac{\lambda_{ij}'}{\lambda_{ij}} \right);
\end{equation}
then, we have that $A'(u_1, u_2)=B(u_1, u_2)$.
We can enlarge $S$ to a set $S'$ including the set $T$ defined in Lemma \ref{lem:deriv} so that all the coefficients of $B$ have no poles outside $S'$, i.e. $B(X,Y) \in \calO_{S'}[X,Y]$ (notice the presence of the derivatives of the coefficients $\lambda_{ij}$ in the expression of $B(X,Y)$). Using Lemma \ref{lem:deriv}, we can bound the cardinality of $S'$ by
\begin{equation} \label{eq:S2}
\# S' \le \max \{1, \chi_{S}(\calC) \},
\end{equation}
and so
\begin{equation} \label{eq:chi2}
\max\{1,\chi_{S'}(\calC)\} \le 2 \max\{1, \chi_{S}(\calC)\}.
\end{equation}
 
\noindent We rewrite \eqref{eq:propA_irr} as 
\begin{align*}
\sum_{P \in \calC \setminus S} \max \{ 0, \ord_P(A(u_1,u_2)) -1 \} = & \sum_{P \in \calC \setminus S'} \max \{ 0, \ord_P(A(u_1,u_2)) -1 \} \\
 + & \sum_{P \in S' \setminus S} \max \{ 0, \ord_P(A(u_1,u_2)) -1 \}. \nonumber
\end{align*}
We will estimate the two sums separately, dividing the proof into several steps similarly to \cite{CZConic}. Moreover, we will explicitly compute all the constants involved at every step showing that they depend only on $\deg A$, $H_{\calC}(A)$ and $\varepsilon$ as wanted.\medskip

\paragraph{\textbf{Step 1.}} \emph{Either the two polynomials $A(X,Y)$ and $B(X,Y)$ are coprime, or $u_1$ and $u_2$ satisfy a relation of the form $u_1^r u_2^s = \gamma$, with $\gamma$ an algebraic function of the coefficients of $A$ and $\max\{|r|, |s|\}\le \deg A$.}

\begin{proof}
First, notice that by assumption $A$ is irreducible and neither $\partial A/\partial X$ nor $\partial A/\partial Y$ is identically zero, hence $A(X,Y)$ has at least two monomials. Suppose that the two polynomials $A$ and $B$ are not coprime. Since $\deg B(X,Y)\le \deg A(X,Y)$, there exists $a\in \kappa(\calC)^*$ such that $B(X,Y)=aA(X,Y)$. Using \eqref{eq:Bshape}, we have that $i \frac{u_1'}{u_1}+j \frac{u_2'}{u_2}+ \frac{\lambda_{ij}'}{\lambda_{ij}}=a$ for every $i,j$ with $\lambda_{ij}\neq 0$. Let us consider two monomials with $(i,j)\neq (h,k)$; we have
\begin{equation*}
  i \frac{u_1'}{u_1}+j \frac{u_2'}{u_2}+ \frac{\lambda_{ij}'}{\lambda_{ij}}=h \frac{u_1'}{u_1}+k \frac{u_2'}{u_2}+ \frac{\lambda_{hk}'}{\lambda_{hk}},
\end{equation*}
hence 
\begin{equation*}
  (i-h) \frac{u_1'}{u_1} +(j-k) \frac{u_2'}{u_2}=\frac{\lambda_{hk}'}{\lambda_{hk}}- \frac{\lambda_{ij}'}{\lambda_{ij}},
\end{equation*}
which gives a relation of multiplicative dependence between $u_1$ and $u_2$, i.e.
\[
u_1^{i-h} u_2^{j-k}= \mu \lambda_{hk} \lambda_{ij}^{-1},
\]
where $\mu \in \kappa^\times$. Taking $r=i-h$, $s=j-k$ and $\gamma =\mu \lambda_{hk} \lambda_{ij}^{-1}$, we have a relation $u_1^ru_2^s=\gamma$ with $\gamma$ an algebraic function of the coefficients of $A$ and $\max\{|r|,|s|\}\le \deg A$. This proves Step 1.\end{proof}

From now on, we will therefore assume that $A$ and $B$ are coprime.
\medskip

As $A(u_1,u_2)' = B(u_1,u_2)$, we have that, for every $P \not \in S'$,
\begin{equation} 
	\max \{ 0, \ord_P(A(u_1,u_2)) -1 \} \leq  \min \{ \ord_P(A(u_1,u_2)), \ord_P(B(u_1,u_2)) \},
  \label{eq:th12AB}
\end{equation}
since both $A(u_1, u_2)$ and $B(u_1, u_2)$ are $S'$-integers and so the term of the right hand-side is nonnegative.
\medskip

Let $F(X):=\mbox{Res}_Y(A(X,Y), B(X,Y)) \in \calO_{S'}[X]$ and  $G(Y):= \mbox{Res}_X(A(X,Y), B(X,Y))\in \calO_{S'}[Y]$ be the two resultants of $A$ and $B$ with respect to $Y$ and $X$. If $A$ and $B$ are coprime, then $F$ and $G$ do not vanish identically. Moreover, since $F(X)$ and $G(Y)$ are linear combinations of $A(X,Y)$ and $B(X,Y)$ over $\calO_{S'}[X,Y]$, we have that for every $ P \not \in S'$,
\begin{equation} \label{eq:AB}
\min \{ \ord_P(A(u_1,u_2)), \ord_P(B(u_1,u_2)) \} \le \min \{ \ord_P(F(u_1)), \ord_P(G(u_2)) \}.
\end{equation}
It is then enough to bound the gcd of $F(u_1)$ and $G(u_2)$. In order to do this, we first need to prove a bound for the degrees of $F$ and $G$ and for the heights of their coefficients. This is the content of Step 2. \medskip

\paragraph{\textbf{Step 2.}} \emph{There exist positive constants $\Cfour$ and $\Cfive$ such that the heights of $B(X,Y)$, $F(X)$ and $G(Y)$ are bounded by $\Cfour \max\{ 1, \chi_S(\calC) \}$ and the degrees of $B(X,Y)$, $F(X)$ and $G(Y)$ are bounded by $\Cfive$.}

\begin{proof}
Given the expression for $B(X,Y)$, we can bound the height of each of its coefficients as
\[
	H_{\calC}\left(\frac{u_1'}{u_1}\right) + H_{\calC}\left(\frac{u_2'}{u_2}\right) + H_{\calC}\left(\frac{\lambda_{ij}'}{\lambda_{ij}}\right) + H_{\calC}(\lambda_{ij}) \leq 3 \chi_{S'}(\calC) + H_{\calC}(A),
\]
since the first three terms are bounded by the Euler characteristic of $\calC \setminus S'$ using Lemma \ref{lem:deriv}.
Consequently, $H_{\calC}(B) \le 3 \max\{1, \chi_{S'}(\calC)\}  + H_{\calC}(A)$. Moreover, by definition, $F(X)$ and $G(Y)$ can be expressed as the determinant of a $N \times N$ matrix whose entries are the monomials appearing in $A(X,Y)$ and $B(X,Y)$, and with $N=\deg_Y A(X,Y) + \deg_Y B(X,Y)$ for $F$ and $N=\deg_X A(X,Y)+ \deg_X B(X,Y)$ for $G$ respectively. Using that $\deg_Y B(X,Y)\le \deg_Y A(X,Y)$ and $\deg_X B(X,Y)\le \deg_X A(X,Y)$, we have that the heights of $F$ and $G$ are bounded as follows:
\[
H_{\calC}(F) \leq 2 \deg_Y A \left( 3 \max \{ 1, \chi_{S'}(\calC) \} + H_{\calC}(A) \right)\quad \text{and}\quad  H_{\calC}(G) \leq 2 \deg_X A \left( 3 \max \{ 1, \chi_{S'}(\calC) \} + H_{\calC}(A) \right).
\]
Using \eqref{eq:chi2}, the first estimate is proved by taking
\[
\Cfour:= 2 \max \{\deg_X A(X,Y), \deg_Y A(X,Y) \} (6 + H_{\calC}(A)). 
\] 
Finally, since the degrees of $F(X)$ and $G(Y)$ are bounded by $2\deg_X A(X,Y) \deg_Y A(X,Y)$, and 
\[
	\deg B(X,Y) = \deg_X B(X,Y) + \deg_Y B(X,Y) \leq 2\max \{\deg_X A(X,Y), \deg_Y A(X,Y)\},
\]
we have that $\Cfive: = 2 \deg_X A(X,Y) \deg_Y A(X,Y)$ gives the desired estimate. \end{proof}

Next, we need to factor $F(X)$ and $G(Y)$; this can be done in a suitable finite extension of $\kappa(\calC)$. However, in order to bound the gcd of $F(u_1)$ and $G(u_2)$, we have to estimate the degree of this extension and the height of the roots of $F(X)$ and $G(Y)$. This can be done as follows. \medskip

	\paragraph{\textbf{Step 3.}} \emph{There exist a cover $p:\calE \to \calC$ of degree bounded by $2\Cfive$ and a finite set $U \subset \calE$ such that $F(X)$ and $G(Y)$ splits over $\kappa(\calE)$ into linear factors and their roots are $U$-units. Moreover, there exist two constants $\Csix, \Cseven>0$ such that }
\begin{equation} \label{eq:U}
  \chi(\calE) \leq \Csix \max \{ 1, \chi_S(\calC) \} \quad \mbox{ and } \quad \# U \le \Cseven \max\{ 1, \chi_S(\calC)\} .
\end{equation}
\begin{proof}
	If the polynomials $F(X)$ and $G(Y)$ split over $\kappa(\calC)$, then the conclusion holds trivially, so we will assume that this is not the case. 

	Let us define $\kappa(\calE)$ to be the splitting field of $F(Z)G(Z)$ and let us denote by $p: \calE \to \calC$ the cover corresponding to the field extension. Then, $\deg p \leq \deg F + \deg G \leq 2 \Cfive$. To estimate the Euler characteristic of $\calE$ we can use the Riemann-Hurwitz formula. First, note that the ramification of $p$ can arise only over zeros or poles of the discriminants of the irreducible factors of $F$ and $G$. By construction, the poles are contained in the set $S'$; on the other hand, the number of zeros is bounded by the heights of the discriminants. Recall that the discriminant of a degree $n$ polynomial $h$ is an homogeneous polynomial in the coefficients of $h$ of degree $2n-2$.  Therefore, its height is bounded by $2 H(h) \deg h$. In our case, the heights of $F(X)$ and $G(Y)$ are both bounded by $\Cfour \max\{1, \chi_S(\calC)\}$ as proved in Step 2, and therefore the total number of zeros is bounded by $2\Cfour\Cfive \chiSC$. This, together with \eqref{eq:S2}, implies that the cardinality of the support of the ramification divisor is bounded by $2 \Cfour\Cfive \chiSC + \# S' \leq (2\Cfour \Cfive +1)  \chiSC$. \\

Since each ramification index is at most $\Cfive$ (as $\deg p \le 2\Cfive$), the total ramification of $p$ is bounded by $\Cfive (2\Cfour \Cfive +1) \chiSC$. Applying the Riemann-Hurwitz formula, we get
\begin{equation*}
  \chi(\calE) \leq (\deg p) \chi(\calC) + \Cfive (2\Cfour \Cfive +1) \chiSC  \leq \Csix \chiSC, 
\end{equation*}
where $\Csix:=\Cfive(2\Cfour\Cfive +3)$.\\

Let us define the set $U'$ as the set of zeros of the constant and leading terms of both $F$ and $G$ and $U'' :=  U' \cup S'$. Note that the cardinality of $U'$ is bounded by $2 (H_{\calC}(F) + H_{\calC}(G)) \leq 4 \Cfour \max \{1, \chi_S(\calC)\}$, and therefore the cardinality of $U''$ is bounded by $(4 \Cfour + 1) \max \{ 1, \chi_S(\calC) \}$. We define $U:= p^{-1}(U'')$. Notice that, by construction, the roots of $F$ and $G$ are $U$-units in $\kappa(\calE)$; this follows from the fact that the coefficients of $F$ and $G$ are $S'$-integers and the product of all the roots of both $F$ and $G$ is a $U''$-unit. The cardinality of $U$ is bounded by $(\deg p) (\# U'')$, i.e.
\begin{equation*}
   \# U \le \Cseven \max\{ 1, \chi_S(\calC)\},
\end{equation*}
with $\Cseven:= 2\Cfive (4\Cfour + 1)$, which completes the proof.
\end{proof}

\medskip

Recall that, in the proof of the previous step, defined the set $U''$ as $U' \cup S'$, where $U'$ is the set of zeros of the constant and leading terms of both $F$ and $G$, and the set $U$ as $p^{-1}(U'')$, where $p: \calE \to \calC$ is the cover defined in Step 3. 


\noindent Using that $U''\supseteq S' \supseteq S$, we can split the sum in \eqref{eq:propA_irr} as
\begin{align*}
\sum_{P \in \calC \setminus S } \max \{ 0, \ord_P(A(u_1,u_2)) -1 \} \le & \sum_{P \in \calC \setminus U'' } \max \{ 0, \ord_P(A(u_1,u_2)) -1 \} \\
+ & \sum_{P \in U'' \setminus S } \max \{ 0, \ord_P(A(u_1,u_2)) -1 \}.
\end{align*}

By \eqref{eq:th12AB} and the fact that $p(U)=U''$, we can bound the first sum as 
\begin{align*}
\sum_{P \in \calC \setminus U'' } \max \{ 0, \ord_P(A(u_1,u_2)) -1 \} \le & \sum_{P \in \calC \setminus U'' } \min \{ \ord_P(A(u_1,u_2)), \ord_P(B(u_1,u_2)) \} \\
 = & \frac{1}{[\kappa(\calE): \kappa(\calC) ]} \sum_{P \in \calE \setminus U} \min \{ \ord_P(A(u_1,u_2)), \ord_P(B(u_1,u_2)) \},
\end{align*}
hence we get
\begin{align} \label{eq:C-S}
\sum_{P \in \calC \setminus S} \max \{ 0, \ord_P(A(u_1,u_2)) -1 \} & \le  \sum_{P \in \calE \setminus U} \min \{ \ord_P(A(u_1,u_2)), \ord_P(B(u_1,u_2)) \} \\
& + \sum_{P \in U'' \setminus S} \max \{ 0, \ord_P(A(u_1,u_2)) -1 \}. \nonumber
\end{align}
We now want to bound the first sum of the right hand side of \eqref{eq:C-S} using the polynomials $F$ and $G$.
\noindent Since $\kappa(\calE)$ is the splitting field of $F(Z)G(Z)$, we can rewrite the two polynomials in $\kappa(\calE)$ as
\begin{equation} \label{eq:factFG}
  F(X) = \eta (X - \alpha_1)\cdots (X-\alpha_m) \quad  \mbox{ and } \quad G(Y) = \nu (Y - \beta_1) \cdots (Y - \beta_n),
\end{equation}
where $\eta,\alpha_1,\dots,\alpha_m$ and $\nu,\beta_1,\dots,\beta_n$ are $U$-units in $\kappa(\calE)$ as proved in the previous step.

\noindent For every point $P \in \calE \setminus U$ we have that
\begin{equation*}
\min \{ \ord_P(F(u_1)), \ord_P(G(u_2)) \} \le \sum_{(i,j)} \min \{ \ord_P(u_1 - \alpha_i), \ord_P(u_2 - \beta_j) \},
\end{equation*}
using the general property that $\min \{ \sum_i a_i, \sum_j b_j\} \le \sum_{(i,j)} \min \{a_i, b_j\} $; hence, by \eqref{eq:AB}, for every $P\in \calE \setminus U$ we have 
\begin{equation} \label{eq:step4_1}
\min \{ \ord_P(A(u_1,u_2)), \ord_P(B(u_1,u_2)) \} \le \sum_{(i,j)} \min \{ \ord_P(u_1 - \alpha_i), \ord_P(u_2 - \beta_j) \}.
\end{equation}

Let us now define the set $\calZ = \{ \alpha,\beta \in \kappa(\calE): A(\alpha,\beta) = 0 = B(\alpha,\beta),\ \alpha\beta \neq 0 \}$. This set is in principle strictly smaller than the set of pairs $(\alpha_i,\beta_j)$ such that $F(\alpha_i) = G(\beta_j) = 0$ that we consider in the sum; however, we will show that, eventually replacing $U$ with a bigger set, the inequality \eqref{eq:step4_1} remains true if we restrict the sum to the the pairs $(\alpha_i, \beta_j) \in \calZ$. This is the content of the following step.\medskip

%
%
  \paragraph{\textbf{Step 4.}}  \textit{There exist a finite set $V \supseteq U$ and a constant $C_V>0$ such that $\#V \leq C_V \max \{1, \chi_S(\mathcal C)\}$ and, for every $P \in \calE \setminus V$,}
\begin{equation}
  \label{eq:setZ}
\min \{ \ord_P(A(u_1,u_2)), \ord_P(B(u_1,u_2)) \} \le \sum_{(\alpha,\beta) \in \calZ} \min \{ \ord_P(u_1 - \alpha_i), \ord_P(u_2 - \beta_j) \}.
\end{equation}
\begin{proof}
	Let $V$ be the subset of $\calE$ obtained by enlarging $U$ such that, for every $i$ and $j$, $A(\alpha_i, \beta_j)$ and $B(\alpha_i, \beta_j)$ are $V$-units, whenever they are not zero.  The cardinality of the set $V$ can be bounded as follows: since $\alpha_i$ and $\beta_j$ are $U$-units which are roots of $F$ and $G$ respectively, their heights are bounded by $\max\{ H_{\calE}(F), H_{\calE}(G) \}$, which, using Step 2 and Step 3, is bounded by $2 \Cfour \Cfive \max \{1, \chi_S(\mathcal C)\}$. Moreover, we can always bound the height of $A(\alpha_i, \beta_j)$ and $B(\alpha_i, \beta_j)$ in terms of the degree of $A$ and the maximum of the heights of their monomials; using that $H_{\calC}(A)\le \Cfour$, this implies that
\begin{equation*}
  H_{\calE}(A(\alpha_i, \beta_j)) \le H_{\calE}(A)+(\deg A) (H_{\calE}(\alpha_i)+H_{\calE}(\beta_j)) \le 4\Cfour \Cfive(1 + \deg A) \max \{1, \chi_S(\mathcal C)\}, 
\end{equation*}
and similarly, since $\deg B \leq \deg A$ and $H_{\calE}(B)\le \Cfour \max \{1, \chi_S(\mathcal C)\}$,
\begin{equation*}
H_{\calE}(B(\alpha_i, \beta_j)) \le H_{\calE}(B) +(\deg A) (H_{\calE}(\alpha_i)+H_{\calE}(\beta_j)) \le 4 \Cfour \Cfive(1 + \deg A)\max \{1, \chi_S(\mathcal C)\} . 
\end{equation*}
Taking into account that the number of pairs $(\alpha_i, \beta_j)$ is bounded by $(\deg F)(\deg G)$, which is bounded by $\Cfive^2$ by Step 2, the former two inequalities give
\begin{equation} \label{eq:cardV}
\# V \le \# U + \Cfive^2 (H_{\calE}(A(\alpha_i, \beta_j))+ H_{\calE}(B(\alpha_i, \beta_j))) \le C_V \max \{1, \chi_S(\mathcal C)\},
\end{equation}
with $C_V:= \Cseven + 8 \Cfour \Cfive^3( 1 +  \deg A)$.
\medskip

Let us fix $P \in \calE \setminus V$. To prove \eqref{eq:setZ} we can assume without loss of generality that $\min \{ \ord_P(A(u_1,u_2)) , $ $ \ord_P(B(u_1,u_2)) \}>0$, otherwise the inequality is trivial since every term of the sum in the right hand side of \eqref{eq:setZ} is nonnegative. In particular, we can assume that $\ord_P(A(u_1,u_2)) > 0$. We will show that, if $(\alpha_i, \beta_j) \notin \calZ$, then
\[
  \min \{ \ord_P(u_1 - \alpha_i), \ord_P(u_2 - \beta_j) \} = 0,
\]
hence proving the claim. To see this, consider $(\alpha_i,\beta_j) \notin \calZ$, so for example $A(\alpha_i,\beta_j) \neq 0$.  By definition of $V$, $A(\alpha_i, \beta_j)$ is a $V$-unit, which implies that $\ord_P(A(\alpha_i, \beta_j))=0$ since $P \notin V$. But this implies that $\min \{ \ord_P(u_1 - \alpha_i), \ord_P(u_2 - \beta_j) \}=0$, otherwise both the differences $u_1-\alpha_i$ and $u_2-\beta_j$ would have a zero in $P$, and therefore since $A(u_1, u_2)$ has a zero in $P$, also $A(\alpha_i, \beta_j)$ would have a zero in $P$, which contradicts the fact that $\ord_P(A(\alpha_i, \beta_j))=0$.
\end{proof}

\noindent Using the bound for $\chi(\calE)$ obtained in \eqref{eq:U} we get
\begin{equation}
  \chi_V(\calE) = \chi(\calE)+ \# V \leq (\Csix + C_V) \max\{ 1, \chi_S(\calC) \}.
  \label{eq:chi_vE}
\end{equation}

\noindent By \eqref{eq:setZ}, Step 2, and inverting the order of summation, we have:
\begin{align} \label{eq:Step5}
 \sum_{P \in \calE \setminus V} \min \{ \ord_P(A(u_1,u_2)), \ord_P(B(u_1,u_2)) \} & \le \sum_{P \in \calE \setminus V} \sum_{(\alpha, \beta) \in \calZ} \min \{ \ord_P(u_1 - \alpha), \ord_P(u_2 - \beta) \} \nonumber \\
& \le \Cfive^2 \max_{(\alpha, \beta) \in \calZ}  \sum_{P \in \calE \setminus V} \min \{ \ord_P(u_1 - \alpha), \ord_P(u_2 - \beta) \}. 
\end{align}
Let us define $(\alpha_{\hat{\imath}}, \beta_{\hat{\jmath}})$ as a pair in $\calZ$ for which the maximum is obtained.

In order to estimate the right hand side of \eqref{eq:Step5}, we will apply the gcd result of \cite{CZConic} to the $V$-units $u_1/\alpha_{\hat{\imath}}$ and $u_2/\beta_{\hat{\jmath}}$. We distinguish two cases according to whether the $V$-units are multiplicative independent modulo constants or not. \medskip

\paragraph{\textbf{Step 5.}} \emph{Assume that $u_1/\alpha_{\hat{\imath}}$ and $u_2/\beta_{\hat{\jmath}}$ are multiplicatively independent modulo constants; then, there exist a constant $\Cnine>0$ such that, if $\max \{ H_\calC(u_1), H_\calC(u_2) \} \geq \Cfour \max\{ 1, \chi_S(\calC) \}$, then}
\begin{equation*}
 \sum_{P \in \calE \setminus V} \min \{ \ord_P(A(u_1,u_2)), \ord_P(B(u_1,u_2)) \} \leq \Cnine \max\left\{ H_\calC (u_1), H_\calC (u_2) \right\}^{2/3} \max\{ 1, \chi_S(\calC) \}^{1/3}.
\end{equation*}

\begin{proof}
	Since we are assuming that the $V$-units $u_1/\alpha_{\hi}$ and $u_2/\beta_{\hj}$ are multiplicative independent modulo constants, we get that \cite[Corollary 2.3]{CZConic} implies that	
\begin{equation} \label{eq:step4.1}
	\sum_{P \in \calE \setminus V} \min \{ \ord_P(u_1 - \alpha_{\hat{\imath}}), \ord_P(u_2 - \beta_{\hat{\jmath}}) \} \leq 3 \sqrt[3]{2} \max \left\{ H_\calE \left(\frac{u_1}{\alpha_{\hat{\imath}}}\right), H_\calE \left(\frac{u_2}{\beta_{\hat{\jmath}}}\right) \right\}^{2/3} \chi_V(\calE)^{1/3}.
\end{equation}

\noindent Using the bound of $\chi_V(\calE)$ in \eqref{eq:chi_vE}, we can rewrite \eqref{eq:Step5} using \eqref{eq:step4.1} as
\begin{equation}\label{eq:st4}
\small{
\sum_{P \in \calE \setminus V} \min \{ \ord_P(A(u_1,u_2)), \ord_P(B(u_1,u_2)) \}  \leq  3\sqrt[3]{2}\Cfive^2 (\Csix + C_V)^{1/3}\max \left\{ H_\calE \left(\frac{u_1}{\alpha_{\hat{\imath}}}\right), H_\calE \left(\frac{u_2}{\beta_{\hat{\jmath}}}\right) \right\}^{2/3}\max\{1,\chi_S(\calC)\}^{1/3}.}
\end{equation}
To prove the statement we want to relate the heights of $u_1/\alpha_{\hi}$ and $u_2/\beta_{\hj}$ with the heights of $u_1$ and $u_2$. Since $\alpha_{\hi}$ and $\beta_{\hj}$ are roots of the polynomials $F$ and $G$, by Step 2 we obtain that
  \begin{equation*}
    \max \{ H_\calE(\alpha_{\hi}), H_\calE(\beta_{\hj}) \} \leq 2\Cfour \Cfive \max \{ 1, \chi_S(\calC) \}.
  \end{equation*}  
  Therefore, if 
  \begin{equation}
   \max\{ H_\calE(u_1), H_\calE(u_2) \} \geq 2\Cfour \Cfive \max \{ 1, \chi_S(\calC) \},
    \label{eq:step5.bound}
  \end{equation}
  we have that
  \begin{equation}
    H_\calE \left( \dfrac{u_1}{\alpha_{\hi}} \right) \leq H_\calE (u_1) + H_\calE \left(\alpha_{\hi} \right) \leq 2 \max\{ H_\calE(u_1), H_\calE(u_2) \}= 4 C_4 \max\{ H_\calC(u_1), H_\calC(u_2) \},
    \label{eq:step5.2}
  \end{equation}
and similarly for $u_2/\beta_{\hj}$. Notice that 
 \eqref{eq:step5.bound} is equivalent to 
 \begin{equation*}
 \max\{ H_\calC(u_1), H_\calC(u_2) \} \geq \Cfour  \max \{ 1, \chi_S(\calC) \};
 \end{equation*}
 under this assumption, using \eqref{eq:step5.2}, we can rewrite \eqref{eq:st4} as
 \begin{equation*}
	  \sum_{P \in \calE \setminus V} \min \{ \ord_P(A(u_1,u_2)), \ord_P(B(u_1,u_2)) \}
    \leq \Cnine \max\left\{ H_\calC(u_1), H_\calC (u_2) \right\}^{2/3} \max\{ 1, \chi_S(\calC) \}^{1/3}, \nonumber 
  \end{equation*}
  where we set $\Cnine := 3 \cdot 2^{5/3} \, \Cfive^{8/3} (\Csix + C_V)^{1/3}$.
\end{proof}

In the next step, we deal with the case in which the $V$-units $u_1/\alpha_{\hi}$ and $u_2/\beta_{\hj}$ are multiplicatively dependent modulo constants.  We observe that, as by construction $(\alpha_{\hat{\imath}}, \beta_{\hat{\jmath}}) \in \calZ$, we can apply Lemma \ref{lem:314} which implies that in this case either $u_1$ and $u_2$ satisfy a relation of the form $u_1^r u_2^s = \gamma$, where $\gamma$ is an algebraic function of the coefficients of $A$, or at least one between $u_1/\alpha_{\hat{\imath}}$ and $u_2/\beta_{\hat{\jmath}}$ is constant. However, using the bounds for $H_{\calC}(F)$ and $H_{\calC}(G)$ given in Step 2, we have that, if $\max\{ H_{\calC}(u_1), H_{\calC}(u_2) \} \ge \Cfour \max \{ 1, \chi_S(\calC) \}$, we can always assume to be in the case in which none of the quotients $u_1/\alpha_{\hat{\imath}}$ and $u_2/\beta_{\hat{\jmath}}$ is constant.

\medskip

\paragraph{\textbf{Step 6.}} \emph{Assume that $u_1/\alpha_{\hat{\imath}}, u_2/\beta_{\hat{\jmath}}$ are multiplicatively dependent and let $(u_1/\alpha_{\hat{\imath}})^r (u_2/\beta_{\hat{\jmath}})^s=\mu$ be a generating relation, for some $\mu \in \kappa^{\times}$. Then, if $\max\{ H_\calC(u_1), H_\calC(u_2) \} \geq \Cfour  \max \{ 1, \chi_S(\calC) \}$,
we have that $u_1^r u_2^s = \gamma$, where $\gamma$ is an algebraic function of the coefficients of $A$, and }
\begin{equation*} 
 \sum_{P \in \calE \setminus V} \min \{ \ord_P(A(u_1,u_2)), \ord_P(B(u_1,u_2)) \} \leq 4 \Cfive^3 \, \dfrac{\max \{ H_\calC(u_1), H_\calC (u_2) \}}{\max\{|r|,|s|\}}.
\end{equation*}
\begin{proof}
	Under the dependence assumption, \cite[Corollary 2.3]{CZConic} applied to the $V$-units $u_1/\alpha_{\hat{\imath}}$ and $u_2/\beta_{\hat{\jmath}}$ gives
\begin{equation} \label{eq:step6}
\sum_{P \in \calE \setminus V} \min \{ \ord_P(u_1 - \alpha_{\hat{\imath}}), \ord_P(u_2 - \beta_{\hat{\jmath}}) \} \leq \dfrac{\max \left\{ H_\calE \left(\frac{u_1}{\alpha_{\hi}}\right), H_\calE \left(\frac{u_2}{\beta_{\hj}}\right) \right\}}{\max \{ \lvert r \rvert, \lvert s \rvert \} }.
\end{equation}

As in the previous step, assuming that $\max \{ H_\calC(u_1), H_{\calC}(u_2) \} \geq \Cfour \max \{1, \chi_S(\calC) \}$, we can rewrite \eqref{eq:Step5}, using \eqref{eq:step6}, in terms of the heights of $u_1$ and $u_2$ as
\begin{equation*}
	\sum_{P \in \calE \setminus V} \min \{ \ord_P(A(u_1,u_2)), \ord_P(B(u_1,u_2)) \} \leq 4 \Cfive^3\, \dfrac{ \max \{ H_{\calC}(u_1), H_{\calC}(u_2) \}}{\max \{ \lvert r \rvert, \lvert s \rvert \}},
\end{equation*}
as wanted. Moreover, the relation $u_1^r u_2^s=\gamma$, with $\gamma$ an algebraic function of the coefficients of $A$ follows applying Lemma \ref{lem:314} and using the assumption that $\max \{ H_\calC(u_1), H_{\calC}(u_2) \} \geq \Cfour \max \{1, \chi_S(\calC) \}$.
  \end{proof}

Recall that our goal is to prove a bound on the number of multiple zeros of $A(u_1,u_2)$. This problem has been reduced to estimate the following sum:
\begin{align}
  \label{eq:splitv}
  \sum_{P \in \calC \setminus S} \max\{ 0, \ord_P(A(u_1,u_2)&-1) \}  \leq \sum_{P \in \calE \setminus V} \min\{ \ord_P(A(u_1,u_2)), \ord_P(B(u_1,u_2)) \} \nonumber \\ 
  & + \sum_{P \in V \setminus U} \max\{ 0, \ord_P(A(u_1,u_2))-1 \}    
  + \sum_{P \in U''\setminus S} \max\{ 0, \ord_P(A(u_1,u_2))-1 \} \nonumber \\
  & \leq \sum_{P \in \calE \setminus V} \min\{ \ord_P(A(u_1,u_2)), \ord_P(B(u_1,u_2)) \} \\
  & + \left[\kappa(\calE): \kappa(\calC)\right] \sum_{P \in p(V)} \max\{ 0, \ord_P(A(u_1,u_2)) \}, \nonumber 
\end{align}
since by construction $U=p^{-1}(U'')$ where $p: \calE \rightarrow \calC$ is the cover defined in Step 3.

In Steps 5 and 6 we proved an estimate for the first sum appearing in the right hand side. We are left with providing an estimate for the remaining sum, which we deal with in the following step.\medskip

\paragraph{\textbf{Step 7.}} \emph{There exists a constant $\Cten >0$ such that:}
\begin{equation*}
  \sum_{P \in p(V)} \max\{ 0, \ord_P(A(u_1,u_2)) \} \leq \Cten \max \{ 1, \chi_S(\calC) \}.
\end{equation*}
\begin{proof}
  Let us write the polynomial $A(u_1,u_2) = \theta_1 + \cdots +\theta_M$ as a sum of monomials, where no subsum vanishes. All the $\theta_i$ are $S$-units, in particular they are $p(V)$-units as $p(V) \supseteq S$; hence,
  \begin{equation}
    H_\calC(\theta_1: \cdots : \theta_M) \geq H_\calC (A(u_1,u_2)) = \sum_{P \in \calC} \max\{0, \ord_P(A(u_1,u_2)) \}.
    \label{eq:step6.1}
  \end{equation}
  On the other hand \cite[Lemma 3.11]{CZConic}, which follows from \cite[Theorem 1]{Zannier}, implies that
  \begin{equation}
    \sum_{P \in \calC \setminus p(V)}\max\{0, \ord_P(A(u_1,u_2)) \} \geq H_\calC (\theta_1: \cdots: \theta_M) - {M \choose 2} \chi_{p(V)}(\calC);
    \label{eq:step6.2}
  \end{equation}
  so, combining \eqref{eq:step6.1} and \eqref{eq:step6.2},
 we have
  \begin{equation}
    \sum_{P \in p(V)} \max\{0, \ord_P(A(u_1,u_2)) \} \leq {M \choose 2} \chi_{p(V)} (\calC).
    \label{eq:step6.3}
  \end{equation}
  The number of monomials of $A(u_1,u_2)$ is bounded by $(\deg A + 1)^2$, and $\chi_{p(V)}(\calC) \leq 2g(\calC) -2 + \# V$. Hence, using the estimate for $\#V$ obtained in \eqref{eq:cardV}, we have
  \begin{equation*}
    \chi_{p(V)}(\calC) \leq (1 + C_V) \max \{ 1, \chi_S(\calC) \}.
  \end{equation*}
  Therefore, \eqref{eq:step6.3} can be rewritten as
  \begin{equation*}
    \sum_{P \in p(V)} \max\{0, \ord_P(A(u_1,u_2)) \} \leq {(\deg A + 1)^2 \choose 2} (1 + C_V) \max \{ 1, \chi_S(\calC) \} .
  \end{equation*}
Taking $\Cten := {(\deg A + 1)^2 \choose 2} (1 + C_V)$, 
we have the desired estimate.
\end{proof}
\medskip
\paragraph{\textit{End of the proof of Proposition \ref{prop:A_irred}.}}
Fix $\varepsilon > 0$. From \eqref{eq:splitv}, we have that, either $u_1^r u_2^s = \gamma$ with $\gamma$ an algebraic function of the coefficients of $A$ and $\max\{ |r|, |s| \} \leq \deg A$ or
\begin{align*}
 \sum_{P \in \calC \setminus S} \max\{ 0, \ord_P(A(u_1,u_2)-1) \} & \le \sum_{P \in \calE \setminus V} \min\{ \ord_P(A(u_1,u_2)), \ord_P(B(u_1,u_2)) \}  \\
& + \left[\kappa(\calE): \kappa(\calC)\right] \sum_{P \in p(V)} \max\{ 0, \ord_P(A(u_1,u_2)) \}.
\end{align*}
In the case in which $u_1/\alpha_{\hi}$ and $u_2/\beta_{\hj}$ are independent modulo constants, by Step 5, if $\max \{ H_\calC(u_1), H_\calC(u_2) \} \geq \Cfour \max\{ 1, \chi_S(\calC) \}$, we get
\begin{equation*}
	\sum_{P \in \calC \setminus S} \max\{ 0, \ord_P(A(u_1,u_2)) - 1 \} \le \Cnine \max \{ H_\calC(u_1), H_\calC(u_2) \}^{2/3} \max \{ 1, \chi_S(\calC) \}^{1/3} + 2 \Cfive \Cten \max\{1, \chi_S(\calC) \}.
\end{equation*}
Therefore in this case we obtain that, if $ \max \{ H_{\calC}(u_1), H_{\calC}(u_2) \} \geq \Cind \max \{ 1, \chi_S(\calC) \}$ with 
\begin{equation*}
	\Cind:= \max \left \{\Cfour, \left( \frac{2\Cnine}{\varepsilon}\right)^3, \frac{4\Cfive \Cten}{\varepsilon} \right \}, 
\end{equation*}
then
\begin{equation*}
  \sum_{P \in \calC \setminus S } \max \left \{ 0, \ord_P(A(u_1,u_2))-1 \right \} \leq  \varepsilon \max \{ H_\calC(u_1) , H_\calC(u_2) \},
\end{equation*}
as wanted.

\medskip

On the other hand, in the case in which $u_1/\alpha_{\hi}$ and $u_2/\beta_{\hj}$ are dependent, by Step 6 we obtain that, if $\max \{ H_\calC(u_1), H_\calC(u_2) \} \geq \Cfour \max\{ 1, \chi_S(\calC) \}$, then
\begin{equation*}
\sum_{P \in \calC \setminus S} \max\{ 0, \ord_P(A(u_1,u_2)) - 1 \} \le 4\Cfive^3 \dfrac{\max \{ H_\calC(u_1), H_\calC(u_2) \}}{ \max \{ |r|, |s| \} } + 2 \Cfive \Cten \max\{1, \chi_S(\calC) \}.
\end{equation*}
This implies that, either $u_1^ru_2^s=\gamma$ with $\gamma$ an algebraic function of the coefficients of $A$ and $\max \{ |r|, |s| \} \leq 8 \Cfive^3/\varepsilon$, or, if $ \max \{ H_{\calC}(u_1), H_{\calC}(u_2) \} \geq \Cdep \max \{ 1, \chi_S(\calC) \}$ with 
\begin{equation*}
	\Cdep := \max \left \{\Cfour, \frac{4\Cfive \Cten}{\varepsilon} \right \}, 
\end{equation*}
then
\begin{equation*}
	\sum_{P \in \calC \setminus S } \max \left \{ 0, \ord_P(A(u_1,u_2))-1 \right \} \leq  \varepsilon \max \{ H_\calC(u_1) , H_\calC(u_2) \}.
\end{equation*}
The two cases imply that, either $u_1^r u_2^s = \gamma$ with $\max \{ |r|, |s| \} \leq \Cirs$ where
\[
	\Cirs := \max \{ \deg A, 8 \Cfive^3/\varepsilon \},
\]
or, if $\max \{ H_\calC(u_1), H_{\calC}(u_2) \} \geq \Ctwo \max \{ 1, \chi_S(\calC) \}$, with $\Ctwo := \max \{ \Cind, \Cdep \}$, we get
\begin{equation*}
	\sum_{P \in \calC \setminus S } \max \left \{ 0, \ord_P(A(u_1,u_2))-1 \right \} \leq  \varepsilon \max \{ H_\calC(u_1) , H_\calC(u_2) \},
\end{equation*}
finishing the proof.
\end{proof}

%


\subsection*{Number of common zeros of two irreducible polynomials}
 To conclude the proof of Theorem \ref{th:1.2} we need to prove a bound for the number of common zeros of two irreducible polynomials, which is the content of the following proposition.

\begin{proposition}
  \label{prop:A_red}
  In the same setting of Theorem \ref{th:1.2}, let $A_1,A_2 \in \calO_S^*[X,Y]$ be two coprime polynomials and let $\varepsilon > 0$ be a positive real number. Then, there exist constants $\Done = \Done(\deg A_1 , \deg A_2,$ $H_{\calC}(A_1), H_{\calC}(A_2), \varepsilon)$ and $\Dirs = \Dirs(\deg A_1, \deg A_2, \varepsilon )$ such that, for every pair $(u_1,u_2) \in (\calO_S^*)^2$ with
   \[
     \max \{ H_{\calC}(u_1), H_{\calC}(u_2)\} \geq \Done \max \{ 1, \chi_S(\calC) \},
   \]
at least one of the following holds:
   \begin{itemize}
	   \item the $S$-units $u_1,u_2$ verify a relation of the form $u_1^r u_2^s = \gamma$ for a pair of integers $(r,s) \in \Z^2 \setminus \{(0,0)\}$ such that $\max \{ \lvert r \rvert, \lvert s \rvert \} \leq \Dirs$ and $\gamma$ is an algebraic function of the coefficients of $A_1$ and $A_2$, independent of $u_1, u_2$;
	   \item the rational functions $A_1(u_1,u_2)$ and $A_2(u_1,u_2)$ verify
   \begin{equation}
     \sum_{P \in \calC \setminus S} \min \{ \ord_P(A_1(u_1,u_2)), \ord_P(A_2(u_1,u_2))  \} \le \varepsilon \max\{ H_\calC(u_1),H_\calC(u_2) \}.
     \label{eq:prop_red_2}
   \end{equation}
	\end{itemize}
%
\end{proposition}

\begin{proof}[Proof of Proposition \ref{prop:A_red}]

 As before, we note that, if one of the two polynomials $A_1,A_2$ does not depend on $X$ or on $Y$, a bound of the desired form is immediate. Therefore we will assume that both $A_1,A_2$ depend nontrivally on both $X$ and $Y$.
Let us write the polynomials $A_1(X,Y)$ and $A_2(X,Y)$ as 
\[
  A_1(X,Y) = \sum_{i+j \le \deg A_1} \lambda_{ij} X^i Y^j \quad \mbox{and} \quad A_2(X,Y) = \sum_{k+l \le \deg A_2} \mu_{kl} X^k Y^l, 
\]
where $\deg A_i(X,Y) = \deg_X A_i(X,Y) + \deg_Y A_i(X,Y)$ for $i=1,2$ as before. \\

The proof follows the same steps as in the proof of Proposition \ref{prop:A_irred}, where the polynomial $A_2$ plays the role of the polynomial $B$. Therefore we will only indicate the required adjustments and compute the corresponding constants. We remark that, as before, all the constants will depend only on the degree and the heights of the polynomials $A_1,A_2$ and $\varepsilon$.\medskip

Note that Step 1 is automatically verified since the two polynomials $A_1$ and $A_2$ are coprime. 

Let $F(X)=\mbox{Res}_Y(A_1(X,Y), A_2(X,Y)) \in \calO_{S}[X]$ and $G(Y)= \mbox{Res}_X(A_1(X,Y), A_2(X,Y))\in \calO_{S}[Y]$ be the two resultants of $A_1$ and $A_2$ with respect to $Y$ and $X$, which do not vanish identically in view of the coprimality of $A_1$ and $A_2$. As in the previous case, we have that, for every place $ P \not \in S$,
\begin{equation}\label{eq:A1A2_2}
\min \{ \ord_P(A_1(u_1,u_2)), \ord_P(A_2(u_1,u_2)) \} \le \min \{ \ord_P(F(u_1)),\ord_P(G(u_2)) \}.
\end{equation}
It is then enough to bound the gcd of $F(u_1)$ and $G(u_2)$. By definition of $F$ and $G$ we get the following bounds for their degrees and their heights.\medskip

\paragraph{\textbf{Step 2.}} \emph{There exist positive constants $\Dtwo$ and $\Dthree$ such that the heights of $A_1(X,Y), A_2(X,Y),$ $F(X)$ and $G(Y)$ are bounded by $\Dtwo \max\{1, \chi_S(\calC)\}$ and the degrees of $F(X)$ and $G(Y)$ are bounded by $\Dthree$.}
\begin{proof}
	The bounds are obtained similarly to Step 2 of Proposition \ref{prop:A_irred}. In this setting we get 
\begin{equation*}
	  \Dtwo := 2 (\deg A_1 + \deg A_2)(H_{\calC}(A_1) + H_{\calC}(A_2)) \quad \mbox{and} \quad \Dthree : = (\deg A_1 + \deg A_2)^2.
\end{equation*}
\end{proof}

In order to bound the gcd of $F(u_1)$ and $G(u_2)$, we have to estimate the degree of the splitting field of $F(Z)G(Z)$ and the height of the roots of $F$ and $G$. \medskip

\paragraph{\textbf{Step 3.}} \emph{There exist a cover $p: \calE \to \calC$ of degree bounded by $2\Dthree$ and a finite set $U \subset \calE$ such that $F(X)$ and $G(Y)$ splits over $\kappa(\calE)$ into linear factors and their roots are $U$-units. Moreover, there exist constants $\Dfour, \Dfive>0$ such that }
\begin{equation*}
  \chi(\calE) \leq \Dfour \max \{ 1, \chi_S(\calC) \} \quad \mbox{ and } \quad \# U \le \Dfive \max\{ 1, \chi_S(\calC)\} .
\end{equation*}
\begin{proof}
	Letting $\kappa(\calE)$ be the splitting field of $F(Z)G(Z)$ and using the same argument as in the previous case, one concludes taking $\Dfour := \Dthree(2\Dtwo \Dthree + 3)$ and $\Dfive := 2 \Dthree (4\Dtwo + 1)$.
\end{proof}

\noindent In $\kappa(\calE)$ we can rewrite the two polynomials as
\begin{equation*} 
  F(X) = \eta (X - \alpha_1)\cdots (X-\alpha_m) \quad  \mbox{ and } \quad G(Y) = \nu (Y - \beta_1) \cdots (Y - \beta_n),
\end{equation*}
where $\eta,\alpha_1,\dots,\alpha_m$ and $\nu,\beta_1,\dots,\beta_n$ are $U$-units in $\kappa(\calE)$ as proved in the previous step. This implies that, for every $P \in \calE \setminus U$,
\[
	\min \{ \ord_P(F(u_1)), \ord_P(G(u_2)) \} \leq \sum_{(i,j)} \min \{ \ord_P(u_1 - \alpha_i), \ord_P(u_2 - \beta_j) \},
\]
hence applying \eqref{eq:A1A2_2} it follows that
\[
	\min \{ \ord_P(A(u_1, u_2)), \ord_P(B(u_1,u_2)) \} \leq \sum_{(i,j)} \min \{ \ord_P(u_1 - \alpha_i), \ord_P(u_2 - \beta_j) \},
\]
As in the previous case, we want to restrict the sum on the right to the set $\calZ = \{ (\alpha,\beta) \in \calE : A_1(\alpha,\beta) = 0 = A_2(\alpha,\beta),\ \alpha\beta \neq 0 \} $. In the next step we prove that the same inequality holds when we restrict to points in a finite set $V \supseteq U$ of bounded cardinality. 
\medskip

\paragraph{\textbf{Step 4.}} \emph{There exist a finite set $V \supseteq U$ and a positive constant $D_V$ such that $\#V \leq D_V \max \{ 1, \chi_S(\calC) \}$ and, for every $P \in \calE \setminus V$},
\begin{equation} 
  \label{eq:st4_bis}
 \min \{ \ord_P(A_1(u_1,u_2)), \ord_P(A_2(u_1,u_2)) \} \le  \sum_{(\alpha,\beta) \in \calZ} \min \{ \ord_P(u_1 - \alpha), \ord_P(u_2 - \beta) \} .
\end{equation}
\begin{proof}
  The proof is identical to the Step 4 of Proposition \ref{prop:A_irred} and we can take
\begin{equation*}
  D_V := \Dfive + 4 \Dtwo \Dthree^3 \left( 2 + \deg A_1 + \deg A_2 \right).
\end{equation*}
\end{proof}


\noindent We can then split the sum \eqref{eq:prop_red_2} as
\begin{align}
\label{eq:st4_2}
 \sum_{P \in \calC \setminus S} \min \{ \ord_P(A_1(u_1,u_2)), \ord_P(A_2(u_1,u_2)) \} & \le  \sum_{P \in \calE \setminus V} \min \{ \ord_P(A_1(u_1,u_2)), \ord_P(A_2(u_1,u_2)) \}  \nonumber \\
& + 2\Dthree \sum_{P \in p(V) \setminus S} \min \{ \ord_P(A_1(u_1,u_2)), \ord_P(A_2(u_1,u_2)) \}.
\end{align}

\noindent Using \eqref{eq:st4_bis} and Step 2, we can rewrite the first sum as 
\begin{align} \label{eq:ihjh}
\sum_{P \in \calE \setminus V} \min \{ \ord_P(A_1(u_1,u_2)), \ord_P(A_2(u_1,u_2)) \} & \le \sum_{P \in \calE \setminus V} \sum_{(\alpha,\beta) \in \calZ} \min \{ \ord_P(u_1 - \alpha), \ord_P(u_2 - \beta) \} \nonumber \\ & \le \Dthree^2 \max_{(\alpha,\beta) \in \calZ} \sum_{P \in \calE \setminus V} \min \{ \ord_P(u_1 - \alpha), \ord_P(u_2 - \beta) \}.
\end{align}

  Let $(\alpha_{\hi}, \beta_{\hj}) \in \calZ$ be a pair for which the maximum is obtained. 
  We will estimate the right hand side of \eqref{eq:ihjh} using the gcd estimate of \cite[Corollary 2.3]{CZConic} and relating it to the heights of $u_1$ and $u_2$. We will consider the case when the two $V$-units $u_1/\alpha_{\hi}$ and $u_2/\beta_{\hj}$ are multiplicatively independent or not modulo constants separately. This is the content of the next two steps.\medskip

\paragraph{\textbf{Step 5.}}
\emph{Assume that $u_1/\alpha_{\hi}$ and $u_2/\beta_{\hj}$ are multiplicatively independent modulo constants. Then, there exists a constant $\Dsix>0$ such that, if $\max\{ H_{\calC}(u_1),H_{\calC}(u_2) \} \ge \Dtwo \max \{ 1, \chi_S(\calC) \}$, then}
\begin{equation}
  \label{eq:st5_2}
  \sum_{P \in \calE \setminus V} \min \{ \ord_P(A_1(u_1,u_2)), \ord_P(A_2(u_1,u_2)) \} \leq \Dsix \max\left\{ H_\calC (u_1), H_\calC (u_2) \right\}^{2/3} \max\{ 1, \chi_S(\calC) \}^{1/3}.
\end{equation}

\begin{proof}
	We apply \cite[Corollary 2.3]{CZConic} to \eqref{eq:ihjh}, noticing that, if $\max\{ H_{\calC}(u_1),H_{\calC}(u_2) \} \ge \Dtwo \max \{ 1, \chi_S(\calC) \}$, we can bound the heights of $u_1/\alpha_{\hi}$ and $u_2/\beta_{\hj}$ in terms of the heights of $u_1$ and $u_2$. Following the same computation as in the irreducible case, we get that \eqref{eq:st5_2} holds with
\begin{equation*}
  \Dsix := 3 \cdot 2^{5/3} \Dthree^{8/3} \left(\Dfour + D_V\right)^{1/3}. 
\end{equation*}
\end{proof}

  We are left with the case in which $u_1/\alpha_{\hi}$ and $u_2/\beta_{\hj}$ are multiplicatively dependent modulo constants, which is the content of the following step. 
  
\medskip
\paragraph{\textbf{Step 6.}}
\emph{Assume that $u_1/\alpha_{\hi}, u_2/\beta_{\hj}$ are multiplicatively dependent and let $(u_1/\alpha_{\hi})^r(u_2/\beta_{\hj})^s = \mu$ be a generating relation for some $\mu \in \kappa^\times$. Then, $u_1^r u_2^s = \gamma$, where $\gamma$ is an algebraic function of the coefficients of $A_1$ and $A_2$ and, if $\max\{ H_{\calC}(u_1),H_{\calC}(u_2) \} $  $\ge \Dtwo \max \{ 1, \chi_S(\calC) \}$, we have }
\begin{equation}
  \label{eq:st6}
  \sum_{P \in \calE \setminus V} \min \{ \ord_P(A_1(u_1,u_2)), \ord_P(A_2(u_1,u_2)) \} \leq 4 D_3^3\, \dfrac{\max\left\{ H_\calC (u_1), H_\calC (u_2) \right\}^{2/3}}{\max \{ |r|, |s| \}}.
\end{equation}

\begin{proof}
	As before, we apply \cite[Corollary 2.3]{CZConic} to \eqref{eq:ihjh}, and, if $\max\{ H_{\calC}(u_1),H_{\calC}(u_2) \} \ge \Dtwo \max \{ 1, \chi_S(\calC) \}$, we can bound the heights of $u_1/\alpha_{\hi}$ and $u_2/\beta_{\hj}$ in terms of the heights of $u_1$ and $u_2$. The same computation as in the irreducible case give \eqref{eq:st6}. Notice that since $(\alpha_{\hi},\beta_{\hj})$ is a zero of $A_1$ and $A_2$, this implies directly that $ u_1^r u_2^s= \gamma$ where $\gamma:=\mu\alpha_{\hi}^r\beta_{\hj}^s$ is an algebraic function of the coefficients of $A_1$ and $A_2$.
\end{proof}

In the previous steps we estimated the first sum in the right hand side of \eqref{eq:st4_2}; in the last step, we bound the second sum.\medskip

\paragraph{\textbf{Step 7.}} 
\emph{There exists a constant $\Dseven>0$ such that:}
\begin{equation*}
  \sum_{P \in p(V)} \min\{ \ord_P(A_1(u_1,u_2)), \ord_P(A_2(u_1,u_2)) \}\leq \Dseven \max \{ 1, \chi_S(\calC) \}.
\end{equation*}
\begin{proof}
	The main idea is that the number of common zeros of $A_1(u_1,u_2)$ and $A_2(u_1,u_2)$, counted with multiplicities, is bounded by the number of zeros of $A_1(u_1,u_2)$ counted with multiplicities, and therefore we can reduce the computation to the case of a single polynomial as in the irreducible case. Using the same computation of Step 7 of Proposition \ref{prop:A_irred}, we can take
  \begin{equation*}
    \Dseven := {(\deg A_1 + 1)^2 \choose 2} (1 + D_V).
  \end{equation*}
\end{proof}

\paragraph{\textit{End of the proof of Proposition \ref{prop:A_red}}.}
The end of the proof is analogous to the irreducible case: given $\varepsilon>0$ one considers \eqref{eq:st4_2} and uses Step 5 and Step 6 to bound the first sum, and Step 7 to bound the second sum. Then, one gets the desired conclusion with the constants
\[
	\Done:= \max \left\{ \Dtwo, \left(\frac{2\Dsix}{\varepsilon}\right)^{3}, \frac{4\Dthree\Dseven}{\varepsilon} \right\} \qquad \text{and} \qquad \Dirs := 8\Dthree^3/\varepsilon.
\]
 \end{proof}

\paragraph{\textit{End of the proof of Theorem \ref{th:1.2}}.} 
 
Given the factorization of the polynomial $A$ given in  \eqref{eq:factorization} we have that, either $u_1$ and $u_2$ satisfy a multiplicative dependence relation of the form $u_1^r u_2^s = \gamma$ with $\max\{ |r|, |s| \} \leq \Crs$ and $\Crs := \max \{ \Cirs, \Dirs \}$ or
\begin{align*}
	\sum_{P \in \calC \setminus S} \max \{ 0,  \ord_P(A(u_1,u_2)) - 1 \} & = \sum_{i=1}^l \sum_{P \in \calC \setminus S}\max \{0, \ord_P(A_i(u_1,u_2)) - 1 \} \\
	& + \sum_{1\leq i < j \leq l} \sum_{P \in \calC \setminus S}\min \{ \ord_P (A_i(u_1,u_2)), \ord_P(A_j(u_1,u_2)) \} \\
	&\leq (\deg A + 1)^2 \left( \max_i \sum_{P \in \calC \setminus S} \max \{0, \ord_P(A_i(u_1,u_2)) - 1 \}\right. \\
	& + \left. \max_{(i , j)} \sum_{P \in \calC \setminus S}\min \{ \ord_P (A_i(u_1,u_2)), \ord_P(A_j(u_1,u_2)) \} \right).
	\label{eq:fin}
\end{align*}

\noindent We obtain the bound \eqref{eq:th12} with
\[
	\Cone := \max \left\{\, \max_i\, C_1\left(A_i, \frac{\varepsilon}{2(\deg A + 1)^2}\right),\, \max_{(i,j)}\, D_1\left(A_i,A_j,\frac{\varepsilon}{2(\deg A + 1)^2}\right) \right\},
\]
where $C_1(A_i, \delta)$ is the constant appearing in Proposition \ref{prop:A_irred} applied to the polynomial $A_i$ and the real number $\delta$ and $D_1(A_i,A_j,\delta)$ is the constant appearing in Proposition \ref{prop:A_red} applied to the polynomials $A_i$ and $A_j$ and the real number $\delta$.
\end{proof}

\begin{remark}
  Recently in \cite{Levin2018}, Levin obtained a generalization of gcd results over number fields as \cite{BCZ03, CZ05} for polynomials with an arbitrary number of variables. These results have been used to prove some cases of the Lang-Vojta conjectures in higher dimension both over number fields \cite{Levin2018} and function fields \cite{CLZ19}. 
\end{remark}

\section{The Ramification Divisor}\label{sec:ram}
%
%
In this section we study the contribution of the ramification divisor of the map $\pi\restriction_{\calX \setminus \calD}$ to the height of a section $\sigma:\calC \to \calX$ with $\supp( \sigma^*\calD) \subseteq S$, where $\calX$ is a threefold as in Section \ref{sec:3folds} and $S \subset \calC$ is a finite set of points. Recall that we denoted by $Z$ the closure of the ramification divisor of the finite map $\pi\restriction_{\calX \setminus \calD}: \calX \setminus \calD \to \G_m^2 \times \calC$. The image $\pi(Z \setminus \calD)$ will be defined by the vanishing of a certain polynomial $A \in \kappa(\calC)[X,Y]$. We will use Theorem \ref{th:1.2} to derive a bound for the degree of the pull-back of the ramification divisor. This is the content of the following proposition.

\begin{proposition}\label{prop:ram}
In the setting above, for every $\varepsilon > 0$ there exists a constant $C = C(\deg A, H_{\calC}(A), \deg \pi, \varepsilon)$ such that, for every section $\sigma: \calC \to \calX$ of height $H \geq C \, \max\{ 1, \chi_S(\calC) \}$ such that $\supp (\sigma^*\calD) \subseteq S$ and $\sigma(\calC)$ is not contained in $Z$, the degree of $\sigma^*(Z) \setminus S$ satisfies
 \begin{equation*}
   \deg (\sigma^*(Z) \setminus S) \leq \varepsilon H.
 \end{equation*}
\end{proposition}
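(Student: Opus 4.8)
The plan is to transfer the statement into a bound on the number of \emph{multiple} zeros of a polynomial evaluated at $S$-units, and then feed this into Theorem~\ref{th:1.2}.

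\emph{Reduction.} A section $\sigma\colon\calC\to\calX$ with $\supp(\sigma^*\calD)\subseteq S$ and $\sigma(\calC)\not\subseteq Z$ yields, by composing with $\pi$ and using $pr_2\circ\pi=\rho$, a morphism $\pi\circ\sigma=\big((u_1:u_2:1),\id_\calC\big)$ with $u_1,u_2\in\calO_S^*$, since $\sigma(P)\in\calX\setminus\calD$ (so that $\pi(\sigma(P))\in\G_m^2\times\{P\}$) for $P\notin S$. As $\pi$ is finite on $\calX\setminus\calD$, it is injective on $\sigma(\calC)$, hence birational onto its image, and $\pi^*$ of a polarization of $\PP^2\times\calC$ is ample on $\calX$; comparing with the given embedding $\varphi$ one obtains $c_1,c_2>0$ depending only on $\varphi$ and $\deg\pi$ with
\[
 c_1 H-c_1'\max\{1,\chi_S(\calC)\}\ \le\ \max\{H_\calC(u_1),H_\calC(u_2)\}\ \le\ c_2 H .
\]

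\emph{Geometric comparison.} Let $\calA\subset\PP^2\times\calC$ be the closure of $\pi(Z\setminus\calD)$, which over $\calC\setminus S$ is the reduced divisor $\{A=0\}$ (there are no vertical components outside $S$, since $\pi$ restricts to a finite map on each fibre there). On $\calX\setminus\calD$ one has $Z\le\pi^*\calA-(\pi^{-1}\calA)_{\mathrm{red}}$ (in characteristic zero, from the structure of the different of a finite map), hence $\sigma^*Z\le(\pi\circ\sigma)^*\calA-\sigma^*\big((\pi^{-1}\calA)_{\mathrm{red}}\big)$; reading off orders at a point $P\notin S$, where $\big((\pi\circ\sigma)^*\calA\big)_P=\ord_P(A(u_1,u_2))$ and where the second term is $\ge 1$ as soon as $\sigma(P)\in Z$, gives $(\sigma^*Z)_P\le\max\{0,\ \ord_P(A(u_1,u_2))-1\}$ for every $P\notin S$. (We may assume $A(u_1,u_2)\not\equiv 0$: otherwise $\sigma(\calC)$ lies in $\pi^{-1}(\calA)$ but, by hypothesis, not in $Z$, and then either $(u_1,u_2)$ is an $S$-integral point of a hyperbolic curve in $\G_m^2$—excluded for $C$ large by the function field version of Siegel's theorem—or the equation of the relevant component of $\{A=0\}$ forces a multiplicative relation on $u_1,u_2$, a situation treated below.) Summing, and adding an error term $\Celeven\max\{1,\chi_S(\calC)\}$ that absorbs the finitely many points at which $\sigma$ meets the singular locus of $\calX$ or a degenerate fibre, we obtain
\[
 \deg\big(\sigma^*(Z)\setminus S\big)\ \le\ \sum_{P\in\calC\setminus S}\max\{0,\ \ord_P(A(u_1,u_2))-1\}\ +\ \Celeven\max\{1,\chi_S(\calC)\}.
\]

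\emph{Conclusion via Theorem~\ref{th:1.2}.} Apply Theorem~\ref{th:1.2} to $A$ with $\varepsilon':=\varepsilon/(4c_2)$. If its second alternative holds, the first sum above is $\le\varepsilon' c_2 H=\tfrac{\varepsilon}{4}H$, and taking $C$ so large that $\Celeven\max\{1,\chi_S(\calC)\}\le\tfrac{\varepsilon}{4}H$ (possible since $H\ge C\max\{1,\chi_S(\calC)\}$) gives the claim. If instead $u_1^r u_2^s=\gamma$ with $\max\{|r|,|s|\}\le\Crs$ and $\gamma$ independent of $u_1,u_2$, there are only finitely many such $(r,s)$ and, for each, finitely many admissible $\gamma$; fixing one, and reducing $r,s$ to be coprime, one has $u_1=\gamma^{s'}w^{-s}$, $u_2=\gamma^{-r'}w^{r}$ for suitable $r',s'$ with $rs'-sr'=1$ and an $S$-unit $w=u_1^{r'}u_2^{s'}$, so that $A(u_1,u_2)=w^{-N}\Psi(w)$ for a fixed one–variable polynomial $\Psi\in\kappa(\calC)[T]$ (of degree and height bounded in terms of $A$ and $\Crs$) with $\Psi\not\equiv 0$—the curve $\{X^rY^s=\gamma\}$ cannot be a component of $\{A=0\}$, since $\pi$ would then be ramified over it everywhere, impossible in characteristic zero. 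Away from $S$ the unit $w$ contributes nothing, so $\sum_{P\notin S}\max\{0,\ord_P(A(u_1,u_2))-1\}=\sum_{P\notin S}\max\{0,\ord_P(\Psi(w))-1\}$, and a Wronskian estimate—comparing $\Psi(w)$ with $\Psi(w)'=\Psi'(w)\,w'$ and bounding the result by $H_\calC(\disc\Psi)$ plus a multiple of $H_\calC(w'/w)\le\chi_S(\calC)$ via Lemma~\ref{lem:deriv}—gives a bound $O(\max\{1,\chi_S(\calC)\})$. Hence again $\deg(\sigma^*(Z)\setminus S)\le\varepsilon H$ once $C$ is large enough; letting $C$ be the maximum of the finitely many constants produced above finishes the proof.

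The step I expect to be the real work is the geometric comparison: one must justify $Z\le\pi^*\calA-(\pi^{-1}\calA)_{\mathrm{red}}$ on the normal—but possibly singular—threefold $\calX\setminus\calD$ (recall $\pi$ is obtained only after resolving an indeterminacy locus contained in $\calD$), and control by $O(\chi_S(\calC))$ the number of points at which $\sigma$ can meet the singular locus of $\calX$, vertical components of $Z$, or bad fibres. The multiplicatively dependent case, although elementary once reduced to one variable, also needs this bookkeeping, because the dichotomy of Theorem~\ref{th:1.2} degenerates there.
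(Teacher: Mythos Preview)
Your overall strategy---reduce $\deg(\sigma^*Z\setminus S)$ to a count of multiple zeros of $A(u_1,u_2)$ via the inequality $(\sigma^*Z)_P\le\max\{0,\ord_P A(u_1,u_2)-1\}$, then feed this into Theorem~\ref{th:1.2}---is exactly the paper's. Your geometric comparison $Z\le\pi^*\calA-(\pi^{-1}\calA)_{\mathrm{red}}$ is the tame Riemann--Hurwitz identity and is correct; the paper instead argues that a local equation $f_V$ of (a Cartier multiple of) $Z$ satisfies $f_V^2\mid\pi^*A$ in $\calO_{V,Z}$, which gives $\ord_P(f_V\circ\sigma)\le\ord_P A(u,v)-1$ directly on the normal threefold without any global bookkeeping---so the singular-locus worries you flag at the end, and the error term $\Celeven\max\{1,\chi_S\}$, are unnecessary. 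The paper also does not set up a height comparison: it simply takes $H=\max\{H_\calC(u),H_\calC(v)\}$. On the other hand you are more careful than the paper in singling out the case $A(u_1,u_2)\equiv 0$.

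The genuine divergence is in the multiplicatively dependent case. The paper observes that under $u_1^ru_2^s=\gamma$ with $\max\{|r|,|s|\}\le\Crs$ the image curve $(\pi\circ\sigma)(\calC)\subset\PP^2\times\calC$ has degree bounded by $2\Crs\,H_\calC(\gamma)$, so a B\'ezout count bounds its intersection with $\pi(Z)$ by $2\Crs H_\calC(\gamma)\deg A$, whence $\deg(\sigma^*Z\setminus S)\le 2\Crs H_\calC(\gamma)(\deg A)(\deg\pi)\le\varepsilon H$ once $H$ is large. Your route---parametrize by a single $S$-unit $w$ and bound the multiple zeros of a one-variable $\Psi(w)$---also works and yields the sharper bound $O(\max\{1,\chi_S\})$, but two details need repair: the displayed formula $\Psi(w)'=\Psi'(w)\,w'$ omits the term coming from differentiating the coefficients of $\Psi$ (which lie in $\kappa(\calC)$, not $\kappa$); and your justification for $\Psi\not\equiv0$ (``$\pi$ would then be ramified over it everywhere'') is wrong---nothing forbids a torus translate inside the branch locus---though the conclusion already follows from your earlier reduction to $A(u_1,u_2)\not\equiv0$. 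The paper's B\'ezout argument is shorter; yours buys a bound independent of $H$.
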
 
   
 \begin{proof}
    The statement is trivial if $\sigma(\calC) \cap Z$ is empty, and without loss of generality we can assume $Z$ irreducible by applying the same argument to each irreducible component.
    By construction $\pi(Z\setminus \calD)$ inside $\G_m^2 \times \calC $ is defined by an irreducible polynomial $A \in \kappa(\calC)[X,Y]$, which, since we are assuming $Z$ is not empty, is not a monomial. Furthermore, we can enlarge $S$ such that each coefficient of $A$ is an $S$-unit (and the extension depends only on $H_{\calC}(A)$ and $\deg A$). The pullback $\pi^*A$ is a regular function on $\calX \setminus \calD$. We claim that the degree of $\sigma^*(Z) \setminus S$ is bounded by the sum of the orders of $A$ evaluated at two $S$-units.

    Note that we can assume that $Z$ is a Cartier divisor: it is always $\Q$-Cartier (see Section \ref{sec:ram_pos}) and therefore there exists a positive integer $\ell$ such that $\ell Z \sim Z'$ with $Z'$ Cartier. Then, since $\sigma^*(Z) = \frac{1}{\ell} \sigma^*(Z')$ and $\ell$ is independent of $\sigma$, we can reduce to the case in which $Z$ itself is a Cartier divisor. Therefore, since $\calX$ is a normal variety, given an (affine) open $V \subset \calX$, $Z$ is locally defined in $V$ by an equation $f_V = 0$. The fact that $Z$ is contained in the ramification locus of $\pi$ implies that $f_V^2$ divides $\pi^*A$ as elements of the local ring $\calO_{V,Z}$. In particular, we can write locally $\pi^*A = f_V^2 g$ for a regular function $g$ in $V$.
    Consider now a point $P \in \calC \setminus S$ such that $\sigma(P) \in Z \cap V$; the contribution of $P$ to the divisor $\sigma^*(Z) \setminus S$ is $\ord_P(f_V \circ \sigma)$. On the other hand, since $\pi^* A = f_V^2 g$ in $\calO_{V,Z}$ for a regular function $g$, we have
    \[
      \ord_P(A(u,v)) \geq 2 \ord_P(f_V \circ \sigma),
    \]
where $\pi \circ \sigma = (u,v)$ for two $S$-units $u,v$ of $\calC$. Hence, we can bound the degree of $\sigma^* (Z) \setminus S$ by estimating the number of multiple zeros of the polynomial $A$ evaluated at the two $S$-units $u$ and $v$. Formally:
    \begin{equation} 
      \deg (\sigma^*(Z) \setminus S) = \sum_{\sigma(P) \in Z} \ord_P (f_v \circ \sigma) \leq \sum_{\sigma(P) \in Z} \left ( \ord_P ( A(u,v)) - 1 \right ).
      \label{eq:ram}
    \end{equation}
    Since we are estimating the degree of $\sigma^*(Z)$ restricted to $\calC \setminus S$, the sum on the right includes only $P \in \calC \setminus S$. To estimate this sum we want to apply Theorem \ref{th:1.2}. Fix $\varepsilon > 0$ and assume first that $u,v$ are multiplicatively independent, i.e. they do not satisfy any multiplicative relation of the form $u^r v^s = \gamma$ for a suitable pair of nonzero integers $(r,s) \in \Z^2 \setminus \{(0,0)\}$ and $\gamma$ an algebraic function of the coefficients of $A$. We can apply Theorem \ref{th:1.2} to obtain directly from \eqref{eq:ram} a bound of the form 
    \begin{equation}\label{eq:5}
    \deg (\sigma^*(Z) \setminus S) \leq \varepsilon H,
    \end{equation}
    provided that $H$ is bigger than $\Theta_1 \max\{1, \chi_S(\mathcal C)\}$, where $\Theta_1(\deg A, H_{\calC}(A),\varepsilon)$ is the explicit constant of Theorem \ref{th:1.2}. This proves the conclusion in this case.\medskip

    Assume on the contrary that there exist $(r,s) \in \Z^2 \setminus \{(0,0)\}$ and a rational function $\gamma \in \kappa(\calC)$ which is an algebraic function of the coefficients of $A$, such that $u^r v^s = \gamma$. Then Theorem \ref{th:1.2} implies that, either the same conclusion as in \eqref{eq:5} holds, or there is a bound of the form $\max \{ |r|, |s| \} \leq \Theta_2$ for a constant $\Theta_2$ that depends only on $\deg A$ and $\varepsilon$. In this latter case, the curve $(\pi \circ \sigma)(\calC)$ is a curve of degree bounded by $2 \Theta_2 \, H_\calC(\gamma)$. Therefore, the intersection between $(\pi \circ \sigma)(\calC)$ and $\pi(Z)$ is bounded by $\left( 2 \Theta_2\, H_\calC(\gamma)\right) \deg A$.

    In this case we obtain that
\begin{equation*}
	\deg (\sigma^*(Z) \setminus S) \leq 2 \Theta_2 H_\calC(\gamma) (\deg A) (\deg \pi) \leq \varepsilon H,
\end{equation*}
provided that $H \geq \dfrac{2 \Theta_2 H_\calC(\gamma) (\deg A) (\deg \pi)}{\varepsilon}$. Hence setting 
\[
	C = \max\left\{ \Theta_1, \dfrac{2 \Theta_2 H_\calC(\gamma) (\deg A) (\deg \pi)}{\varepsilon} \right\}
\]
gives the desired bound.
\end{proof}

%
%
%
%

\section{Positivity of the ramification divisor}
\label{sec:ram_pos}

In Proposition \ref{prop:ram} we estimated the contribution of the ramification divisor $Z$ to the degree of $\sigma(\calC)$. In this section we study positivity properties of this divisor. We first show that $Z$ is linearly equivalent to the divisor $K_{\calX/\calC} + \calD$.

\begin{lemma}[{{see also \cite[Lemma 1]{CZGm} in the split case}}]
  Let $\calX,\calD, Z$ be defined as in Section \ref{sec:3folds}. Then
  \[
    Z \sim K_{\calX/\calC} + \calD.
  \]
  In particular if $K_{\calX/\calC} + \calD$ is big then $Z$ is a big divisor on $\calX$. 
  \label{lem:Zbig}
  \begin{proof}
   By assumption $(\calX,\calD)$ is log canonical, and therefore $\calX$ is normal. Since we are assuming that $\calD$ is a Cartier divisor, this implies that $K_\calX$ is $\Q$-Cartier. Recall that, for the fibration $\rho: \calX \to \calC$, the canonical divisor of $\calX$ verifies
    \[
      K_\calX = K_{\calX/\calC} + \rho^* (K_\calC).
    \]
    At the same time, the canonical divisor of $\PP^2 \times \calC$ is $K_{\PP^2} \boxplus K_\calC := pr_1^*(K_{\PP^2}) + pr_2^*(K_\calC)$, and therefore the Riemann-Hurwitz formula for $\pi: \calX \to \PP^2 \times \calC$ implies that
    \begin{align*}
      K_\calX &= \pi^* (K_{\PP^2} \boxplus K_\calC) + Ram \\
      &= \pi^*pr_1^* (K_{\PP^2}) + \pi^*pr_2^*(K_\calC) + Ram \\
      &= \pi^*pr_1^* (K_{\PP^2}) + \rho^* (K_\calC) + Ram, 
    \end{align*}
    where $Ram$ is the ramification divisor of $\pi$ and we used that $\rho = pr_2 \circ \pi$. Therefore we obtain that 
    \[
      K_{\calX/\calC} - \pi^*pr_1^* (K_{\PP^2}) = Ram.
    \]
    Since $K_{\PP^2} \sim - (\PP^2 \setminus \G_m^2)$, the pullback $\pi^*pr_1^*(K_{\PP^2})$ is linearly equivalent to $-(\calD + Ram_\calD)$, where $Ram_\calD$ is the ramification coming from the support of $\calD$. Similarly, $Ram = Z + Ram_\calD$ and therefore $K_{\calX/\calC} + \calD \sim Z$, as wanted.
  \end{proof}
\end{lemma}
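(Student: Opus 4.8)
The plan is to compute the relative canonical divisor $K_{\calX/\calC}$ via the Riemann--Hurwitz formula for $\pi\colon\calX\to\PP^2\times\calC$ and then to recognise $Z$ inside the resulting ramification term. Since $(\calX,\calD)$ is log canonical and $\calD$ is Cartier, the total space $\calX$ is normal and $K_\calX$ is $\Q$-Cartier, so Riemann--Hurwitz for the generically finite dominant morphism $\pi$ gives
\[
  K_\calX\;\sim\;\pi^*K_{\PP^2\times\calC}+R,
\]
where $R$ is the effective ramification divisor of $\pi$. Now $K_{\PP^2\times\calC}=pr_1^*K_{\PP^2}+pr_2^*K_\calC$, and the identity $pr_2\circ\pi=\rho$ forces $\pi^*pr_2^*K_\calC=\rho^*K_\calC$; comparing with the defining relation $K_\calX=K_{\calX/\calC}+\rho^*K_\calC$ and cancelling the common term $\rho^*K_\calC$ yields
\[
  K_{\calX/\calC}\;\sim\;\pi^*pr_1^*K_{\PP^2}+R .
\]

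Next I would split $R$ according to its support. By construction $Z$ is the ramification divisor of the \emph{finite} map $\pi\restriction_{\calX\setminus\calD}$ onto $\G_m^2\times\calC$, so the part of $R$ supported away from $\calD$ is exactly $Z$, and we may write $R=Z+\mathrm{Ram}_\calD$, where $\mathrm{Ram}_\calD=\sum_{W\subseteq\calD}(e_W-1)W$ collects the ramification indices along the components of $\calD$. To identify $\pi^*pr_1^*K_{\PP^2}$, write $B:=\PP^2\setminus\G_m^2$ for the toric boundary, so that $K_{\PP^2}\sim-B$ and hence $\pi^*pr_1^*K_{\PP^2}\sim-\pi^*pr_1^*B$. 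The key point is that the reduced preimage of $B\times\calC$ under $\pi$ is precisely $\calD$ — this is built into the hypotheses of Section~\ref{sec:3folds}, since over $\calC\setminus S$ the restriction of $\calD$ to each fibre is the pullback of $B$, and $\calD$ is horizontal — so the ramification-index correction along these components gives $\pi^*pr_1^*B\sim\calD+\mathrm{Ram}_\calD$. Substituting,
\[
  K_{\calX/\calC}\;\sim\;-(\calD+\mathrm{Ram}_\calD)+Z+\mathrm{Ram}_\calD\;=\;Z-\calD,
\]
that is, $Z\sim K_{\calX/\calC}+\calD$. The final assertion is then immediate, since bigness depends only on the numerical (a fortiori the linear-equivalence) class; the same relation, together with $\rho^*K_\calC$ being Cartier, also records that $Z$ is $\Q$-Cartier, as used in Section~\ref{sec:ram}.

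I expect the only delicate step to be the bookkeeping behind $\pi^*pr_1^*B\sim\calD+\mathrm{Ram}_\calD$ and $R=Z+\mathrm{Ram}_\calD$: one must check that every prime divisor of $\calX$ is accounted for, i.e.\ that each component of $\calD$ dominates a single coordinate line of $\PP^2$ (which follows because $\pi$ is generically finite and contracts no divisor), that any divisor meeting $\calX\setminus\calD$ lands on a divisor of $\PP^2\times\calC$, and that no spurious vertical divisor over $S$ is miscounted — here one uses that $\pi$ has been made an honest morphism by resolving its indeterminacy, and if necessary enlarges $S$ so that the fibres over its complement behave as prescribed. The remaining ingredients — functoriality of canonical divisors under Riemann--Hurwitz and pullback along $pr_1,pr_2$ — are routine.
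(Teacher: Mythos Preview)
Your proof is correct and follows essentially the same route as the paper: apply Riemann--Hurwitz to $\pi$, cancel $\rho^*K_\calC$ to pass to the relative canonical, and then split both $\pi^*pr_1^*K_{\PP^2}\sim -(\calD+\mathrm{Ram}_\calD)$ and $R=Z+\mathrm{Ram}_\calD$ so that the boundary ramification cancels. Your final paragraph of bookkeeping is more explicit than the paper's, but the argument is the same.
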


Lemma \ref{lem:Zbig} was used in the split case to show that, if $X$ is of log general type, then $Z$ is a big divisor. However, in our situation, Lemma \ref{lem:Zbig} only implies that the restriction of $Z$ to the generic fiber is big, which is sometimes referred as $Z$ is \emph{relatively big} or $\rho$-big. This follows from the fact that 
\[
	(K_{\calX/\calC} + \calD) \restriction_{\calX_\eta} \cong K_{\calX_\eta} + \calD_\eta,
\]
and we are assuming that the generic fiber has the property that the log canonical divisor is big. This in general does not imply that $Z$ is a big divisor (consider for example a trivial family over the base).  
Note however that in certain cases $Z$ is already positive, as shown in the following remark.

\begin{remark}\label{rmk:p2}
  One important example of applications of the main theorem is for complements of divisors in $\PP^2$. In this case, $\calX = \PP^2 \times \calC$ and $D$ is an effective member of the linear system $\lvert \calO_{\PP^2}(m) \boxtimes \calL \rvert := \lvert pr_1^* \calO_{\PP^2}(m) \otimes pr_2^* \calL \rvert$. The condition that the fibers of $\rho = pr_2$ are of log general type implies that $m \geq 4$ and the fact that the fibration is not isotrivial implies that $\deg_\calC \calL > 0$. Then a direct computation shows that $K_{\calX/\calC} \sim \calO_\PP^2(-3) \boxtimes \calO_\calC$ and therefore
  \[
  \calO_{\calX} ( K_{\calX/\calC} + D ) = (\calO_{\PP^2}(-3) \boxtimes \calO_\calC) \otimes (\calO_{\PP^2}(m) \boxtimes \calL )= \calO_{\PP^2}(m-3) \boxtimes \calL,
  \]
  which is ample since $m \ge 4$ and $\deg_\calC \calL >0$. This shows that $Z = K_{\calX/\calC} + D$ is big in this case.
\end{remark}

\noindent To obtain positivity properties of the divisor $Z$ it is sufficient to add to it the pullback of a divisor from the base curve. More specifically, it is enough to add to $Z$ the pullback of the divisor $K_\calC + 2 S$ on the curve $\calC$. We note that, by hypothesis, $\#S \geq 2$ and therefore the divisor $K_\calC + 2S$ is very ample on $\calC$. 

\begin{proposition}\label{prop:Z+e}
	The divisor $Z + \rho^* (K_\calC + 2S)$ is big. Moreover there exists $m>0$ that depends only on the generic fibers of $(\calX,\calD)$ and $Z$ such that $m(Z + \rho^*(K_\calC + 2S)) \sim  H + E$ with $H$ a very ample divisor and $E$ an effective divisor on $\calX$.
\end{proposition}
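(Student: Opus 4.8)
The plan is to upgrade the \emph{relative} bigness of $Z$ over $\calC$ to genuine bigness of $L:=Z+\rho^*(K_\calC+2S)$, using the positivity of $K_\calC+2S$ in the base direction, and then to extract the decomposition $mL\sim H+E$ from Kodaira's lemma while tracking the dependence of $m$. First, by Lemma~\ref{lem:Zbig} we have $Z\sim K_{\calX/\calC}+\calD$, hence $L\sim K_\calX+\calD+2\rho^*S$; restricting to the generic fibre (on which $\rho^*S$ is trivial) gives $L|_{\calX_\eta}\sim K_{\calX_\eta}+\calD_\eta$, which is big on the surface $\calX_\eta$ by the log general type hypothesis. Thus $L$ (equivalently $Z$) is $\rho$-big, and the task is to convert this into absolute bigness.

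For the bigness of $L$ I would pass to direct images on $\calC$. Let $\ell$ be the Cartier index of $Z$ and, for $\ell\mid m$, put $\calE_m:=\rho_*\calO_\calX(mL)$, a locally free sheaf on the smooth curve $\calC$. By the projection formula $\calE_m\cong\rho_*\calO_\calX\big(m(K_{\calX/\calC}+\calD)\big)\otimes\calO_\calC\big(m(K_\calC+2S)\big)$, so $\operatorname{rk}\calE_m=h^0\big(\calX_\eta,m(K_{\calX_\eta}+\calD_\eta)\big)\ge c_1m^2$ for $m\gg0$, with $c_1>0$ depending only on the generic fibre (bigness on the surface $\calX_\eta$). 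For the degree I would invoke the semipositivity of direct images of relative log-pluricanonical sheaves (Fujita, Viehweg, and in the log canonical category Fujino and Campana--P\u{a}un): $\rho_*\calO_\calX\big(m(K_{\calX/\calC}+\calD)\big)$ is weakly positive, hence nef on the curve $\calC$, hence of non-negative degree. Therefore
\[
\deg\calE_m=\deg\rho_*\calO_\calX\big(m(K_{\calX/\calC}+\calD)\big)+\operatorname{rk}\calE_m\cdot m\deg(K_\calC+2S)\ \ge\ \operatorname{rk}\calE_m\cdot m(2g-2+2\#S),
\]
and Riemann--Roch on $\calC$ gives
\[
h^0(\calX,mL)=h^0(\calC,\calE_m)\ \ge\ \deg\calE_m+(1-g)\operatorname{rk}\calE_m\ \ge\ m\operatorname{rk}\calE_m\ \ge\ c_1m^3
\]
for $m$ large and divisible by $\ell$ (here $\#S\ge2$ lets the $m(2g-2+2\#S)$ term absorb the $(1-g)$ term). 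Since $\dim\calX=3$, this shows $L$ is big.

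For the ``moreover'', Kodaira's lemma applied to the big divisor $L$ on the projective threefold $\calX$ yields an integer $m>0$ with $mL\sim H+E$, $H$ ample and $E$ effective, and enlarging $m$ one may take $H$ very ample. To see that $m$ can be chosen depending only on $(\calX_\eta,\calD_\eta)$ and $Z_\eta$, I would start from the analogue over the generic fibre: since $Z_\eta$ is big and effective on the surface $\calX_\eta$, there exist $m_0$ --- depending only on the generic fibre and $Z_\eta$ --- and a very ample divisor $A_\eta$ on $\calX_\eta$ with $m_0Z_\eta\sim A_\eta+(\text{effective})$. Spreading this relation out over $\calC$, the vertical discrepancy is the pullback of a line bundle on $\calC$ (a line bundle trivial on $\calX_\eta$ is $\rho^*$ of its pushforward), which one absorbs into the effective part at the cost of adding $m_0\rho^*(K_\calC+2S)$; because $K_\calC+2S$ is very ample of degree $2g-2+2\#S\ge2$, this twist dominates the discrepancy, and one obtains $m_0(Z+\rho^*(K_\calC+2S))\sim\calA+E$ with $\calA|_{\calX_\eta}=A_\eta$ very ample and $E$ effective; a final bounded adjustment of $\calA$ by a fibre makes it (very) ample on all of $\calX$, so $m=m_0$ works.

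The main obstacle is the lower bound $\deg\calE_m\ge0$: relative bigness alone gives no control on $\deg\rho_*\calO_\calX(mL)$, and it is exactly here that the weak positivity of direct images of relative log-pluricanonical sheaves enters --- a substantial input that must be cited with care, since it uses that $(\calX,\calD)$ is log canonical (possibly after passing to a suitable model). A secondary point requiring attention is the uniformity of $m$ in the ``moreover'': one must check that converting the generic-fibre relation $m_0Z_\eta\sim A_\eta+(\text{effective})$ into a global ``very ample plus effective'' decomposition does not force $m$ to grow with $\calC$ or $S$, the slack built into $K_\calC+2S$ (rather than a merely ample base divisor) being precisely what makes this possible.
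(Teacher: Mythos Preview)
Your argument for bigness of $L=Z+\rho^*(K_\calC+2S)$ via semipositivity of $\rho_*\calO_\calX\big(m(K_{\calX/\calC}+\calD)\big)$ is valid but invokes substantially deeper input (Fujino, Campana--P\u{a}un) than the paper uses, and your treatment of the ``moreover'' clause has a genuine gap that you yourself flag but do not close. The paper handles both parts at once by a much more elementary device.

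The paper's route: since $Z$ is $\rho$-big, the \emph{relative} Kodaira lemma already gives $mZ\sim H_1+E$ on $\calX$ with $H_1$ $\rho$-ample (and, after enlarging $m$, $\rho$-very-ample) and $E$ effective, the integer $m$ depending only on the generic fibre. One then simply adds $m\,\rho^*(K_\calC+2S)$ to both sides. Since $\deg(K_\calC+2S)\ge 2g+1$, this base divisor is very ample on $\calC$, and the paper cites the standard fact that a $\rho$-very-ample divisor plus the pullback of a very ample divisor from the base is very ample on the total space; thus $H:=H_1+m\,\rho^*(K_\calC+2S)$ is very ample and $mL\sim H+E$, giving bigness and the controlled decomposition simultaneously. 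No semipositivity theorem is needed.

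This also explains why your ``moreover'' argument cannot be completed as written. When you spread out $m_0Z_\eta\sim A_\eta+E_\eta$ to $\calX$, the vertical discrepancy $\rho^*M$ has no a priori bound in terms of the generic fibre, so there is no reason $m_0\,\rho^*(K_\calC+2S)$ should dominate it; the degree of $M$ depends on the model, not just on $(\calX_\eta,\calD_\eta)$ and $Z_\eta$. The paper sidesteps this entirely: in the relative decomposition $mZ\sim H_1+E$, any such $\rho^*M$ can be absorbed into $H_1$ without affecting its $\rho$-ampleness, so there is no discrepancy left to control. The key point is that $H_1$ is only required to be \emph{relatively} (very) ample until the very last step, where the base twist by $\rho^*(K_\calC+2S)$ upgrades it to global very ampleness --- and that upgrade costs nothing in $m$.
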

\begin{proof}
The argument is somehow standard; we reproduce it here for completeness. Since $Z$ is $\rho$-big, there exists a $\rho$-ample Cartier divisor $H_1$ and an effective Cartier divisor $E$ on $\calX$ such that $\rho_*\calO_\calX(E) \neq 0$ and $mZ \sim H_1 + E$ for some $m \geq 1$ that depends on $X$ and the generic fiber of $Z$ (see \cite[Proposition 1.6.33]{DFEM}). Moreover, by \cite[\href{https://stacks.math.columbia.edu/tag/01VU}{Lemma 01VU}]{stacks-project} we can assume that $H_1$ is in fact very ample up to replacing $m$ by a large enough multiple. By our assumption on $S$ we see that $K_\calC +2S$ is very ample on $\calC$, since its degree satisfies $\deg K_\calC + 2S \geq 2g + 1$. Then, by \cite[Proposition 1.6.15(i)]{DFEM}, the divisor $H_1 + \rho^*(K_\calC + 2S)$ is very ample on $\calX$, hence
  \[
    m(Z + \rho^*(K_\calC + 2S)) \sim H_1 + \rho^*\left( m(K_\calC + 2S) \right) + E.
  \]
  This shows that $Z + \rho^*(K_\calC + 2S)$ is big and the desired equivalence holds with $H = H_1 + \rho^* m(K_\calC + 2S)$.
\end{proof}

Proposition \ref{prop:Z+e} will be used in the next section to bound the degree of a section by estimating the intersection with the divisor $Z$. 
We end this section with a result that allows a more direct control on the positivity of $Z$ when the family has nice properties coming from the theory of stable pairs.  

When the log canonical divisor of the generic fiber of $\rho$ is ample, we can choose a model of $(X,D)$ over $\calC$ that is a stable family in the sense of Koll\'ar (see for example \cite[Definition 2.7]{fpt}). In this situation we show that, when $\calD$ is an ample divisor on $\calX$, the divisor $K_{\calX/\calC} + \calD$, and therefore $Z$, is indeed big. This follows from a result of Kov\'acs and Patakfalvi in \cite{kp}.

\begin{proposition}
  \label{prop:Zbig}
  Let $(X,D) \to B$ be a stable family such that the generic fiber has log canonical singularities, the base $B$ is a nonsingular projective curve and $D$ is an ample divisor on $X$. Then, $K_{X/B} + D$ is big.
\end{proposition}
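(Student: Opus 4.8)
The plan is to derive the statement from the positivity theory for stable families of Kov\'acs and Patakfalvi, after first observing that the ampleness of $D$ on the total space forces the family to have maximal variation over $B$. Write $\rho\colon X\to B$ for the family map. One may assume that the fibers of $\rho$ are positive dimensional: otherwise $X$ is itself a smooth projective curve finite over $B$, and then $K_{X/B}$ is effective (it is the ramification divisor of $\rho$) while $D$ is ample, so $K_{X/B}+D$ has positive degree and is therefore big.

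The first, and new, step is to show that $(X,D)\to B$ is non-isotrivial, i.e. cannot be trivialized after any finite base change. Suppose it could: there is a finite morphism $B'\to B$ from a smooth projective curve together with an isomorphism $(X,D)\times_B B'\cong (F,D_F)\times_\kappa B'$ for a pair $(F,D_F)$ over $\kappa$. The projection $g\colon X\times_B B'\to X$ is finite, being the base change of $B'\to B$ along $\rho$, hence $g^*D$ is ample on $X\times_B B'$. But under the trivialization $g^*D$ becomes $\mathrm{pr}_F^*D_F$ on $F\times B'$, and for any closed point $p\in F$ the curve $\{p\}\times B'$ is contracted by $\mathrm{pr}_F$, so $\mathrm{pr}_F^*D_F\cdot(\{p\}\times B')=0$, contradicting ampleness. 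As the fibers of $\rho$ have ample log canonical class, being non-isotrivial is equivalent to the induced moduli map $B\to\overline{\calM}_h$ being non-constant, i.e. $\mathrm{Var}(\rho)=\dim B=1$.

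The second step is to invoke \cite{kp}: for a stable family over a smooth projective curve the relative log canonical divisor $K_{X/B}+D$ is nef, and when the family has maximal variation it is moreover big. This is the logarithmic analogue, due to Kov\'acs and Patakfalvi (building on work of Viehweg and Koll\'ar), of the bigness of $K_{X/B}$ for families of canonically polarized varieties of maximal variation; over the curve $B$ it reflects the positivity of $\det\rho_*\omega_{X/B}^{[m]}(mD)$ for suitably divisible $m$, together with the $\rho$-ampleness of $K_{X/B}+D$. Combined with the previous step this gives that $K_{X/B}+D$ is big, as wanted.

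The main obstacle is extracting the statement above from \cite{kp} in exactly the required form: one has to match their hypotheses (stability of $\rho$ in Koll\'ar's sense, lc/slc fibers, $B$ a smooth projective curve) with the precise shape of the positivity theorem, and justify that ``maximal variation'' is the correct avatar of ``non-isotrivial'' in this setting --- which itself relies on the separatedness and properness of the KSBA moduli space to pass from ``isotrivial'' to ``trivializable after a finite base change'', as used in the first step. A more hands-on alternative would combine the nefness of $K_{X/B}+D$ with a direct computation of the top self-intersection $(K_{X/B}+D)^{\dim X}$; the obstruction there is the sign of the $K_{X/B}$-contribution, which along the fibers is genuinely negative (for instance when $X=\PP^2\times B$ as in Remark \ref{rmk:p2}, where $K_{X/B}$ is antiample on the fibers), and which seems unreachable without the global input of \cite{kp}.
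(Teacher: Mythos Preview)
Your approach is essentially the paper's: derive maximal variation from the ampleness of $D$ on the total space, then feed this into the Kov\'acs--Patakfalvi positivity machine. Your argument for non-isotriviality (pull back along a trivializing base change and intersect with a horizontal curve) is in fact more explicit than the paper's, which simply asserts that ampleness of $D$ over a one-dimensional base forces maximal variation.

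There is, however, one step in the paper that you have omitted, and it is precisely the point you yourself flag as the ``main obstacle''. The result the paper cites, \cite[Corollary 7.3]{kp}, is stated for families whose generic fiber is \emph{klt}, not merely log canonical. The paper handles this by a perturbation trick: for $n\gg 0$ choose a general irreducible member $E\in|nD|$ and replace the pair $(X,D)$ by $\bigl(X,\tfrac{1}{n}E\bigr)$. Since $D$ is ample, Bertini gives that $\bigl(X,\tfrac{1}{n}E\bigr)$ has klt generic fiber (the coefficient $1/n<1$ is what buys the strict inequality in the discrepancies), the family is still stable, and $K_{X/B}+\tfrac{1}{n}E\sim_{\Q}K_{X/B}+D$. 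Now \cite[Corollary 7.3]{kp} applies on the nose and yields bigness of $K_{X/B}+\tfrac{1}{n}E$, hence of $K_{X/B}+D$. Your second step invokes \cite{kp} directly for the lc pair $(X,D)$; unless you can point to a version of their theorem that covers the lc case without further hypotheses, you should insert this perturbation step to close the gap.
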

\begin{proof}
  Let $E$ be an irreducible divisor in the linear system $nD$ and consider the pair $\left (X,\frac{1}{n}E \right ) \to B$: it is a stable family where the generic fiber has now klt singularities (see \cite[Definition 2.8]{singmmp}). The fact that $D$ is ample and the base is one dimensional implies that the variation of the family is maximal and therefore \cite[Corollary 7.3]{kp} implies that $K_{X/Y} + \frac{1}{n}E$ is big, finishing the proof.
\end{proof}

Proposition \ref{prop:Zbig} has the advantage of reducing the problem of testing whether the divisor $Z$ is big to an ampleness condition on the model of the divisor $\calD$, a condition that is easier to check in the applications.

%
%
%
\section{Geometric Lang-Vojta for ramified covers of $\G_m^2$}
\label{sec:main}
%
%
%
In this section we apply Proposition \ref{prop:ram} together with Proposition \ref{prop:Z+e} to deduce Theorem \ref{th:main}. This extends explicitly \cite[Theorem 2]{CZGm} to the case of non-isotrivial pairs.

\begin{thmyy}
  \label{th:2}
  Let $\rho: (\calX,\calD) \to \calC$ be a fibered threefold, let $Z$ be the closure of the ramification divisor of the finite map $\pi: \calX \setminus \calD \to \G_m^2 \times \calC$ as defined in Section \ref{sec:3folds}, and let $Z_\eta$ be its generic fiber.
 
  Then, there exists a constant $C = C(Z_\eta, \deg \pi)$ such that, for every section $\sigma: \calC \to \calX$ with $\supp (\sigma^*\calD) \subseteq S$, the following holds:
  \[
    \deg \sigma(\calC) \leq C \cdot \max\{ 1, \chi_S(\calC) \}.
  \]
\end{thmyy}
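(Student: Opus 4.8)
The plan is to combine the height estimate for the ramification divisor from Proposition \ref{prop:ram} with the positivity result of Proposition \ref{prop:Z+e} and the generalized $abc$-inequality over function fields. First I would fix a projective embedding $\varphi$ of $\calX$ and let $H = \deg_\varphi \sigma(\calC)$ be the height of a section $\sigma$ with $\supp(\sigma^*\calD)\subseteq S$; if $\sigma(\calC)\subseteq Z$ this curve is one of finitely many and its degree is bounded by a constant, so we may assume $\sigma(\calC)\not\subseteq Z$. The composition $\pi\circ\sigma$ gives a pair of $S$-units $(u,v)\in\G_m^2(\calO_S^*)$, and the key input from \cite{Brownawell1986} (the generalized $abc$ over function fields) applied to the relation defining $\pi(Z\setminus\calD)$ — or more directly to the polynomial identity satisfied by $u,v$ coming from log general type — produces a bound of the shape
\[
  H_\calC(u) + H_\calC(v) \;\leq\; C_1\bigl(\deg(\sigma^*Z\setminus S) + \chi_S(\calC)\bigr)
\]
unless $u,v$ are multiplicatively dependent with small exponents. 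The idea is that the truncated counting of zeros of $A(u,v)$ away from $S$ controls the heights of $u$ and $v$, and $\deg(\sigma^*Z\setminus S)$ is exactly (a multiple of) that truncated count by the local factorization $\pi^*A = f_V^2 g$ used in Proposition \ref{prop:ram}.

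Next I would feed in Proposition \ref{prop:ram}: for any $\varepsilon>0$, if $H \geq C(\varepsilon)\max\{1,\chi_S(\calC)\}$ then $\deg(\sigma^*Z\setminus S)\leq \varepsilon H$. Meanwhile, since $Z\sim K_{\calX/\calC}+\calD$ by Lemma \ref{lem:Zbig}, and $Z+\rho^*(K_\calC+2S)$ is big by Proposition \ref{prop:Z+e} with $m(Z+\rho^*(K_\calC+2S))\sim \calH + E$ for a very ample $\calH$ and effective $E$, I would compare $H = \deg_\varphi\sigma(\calC)$ with the intersection number $\sigma(\calC)\cdot \calH$. Up to comparability of $\calH$ with the hyperplane class of $\varphi$ (absorbed into constants), $H$ is bounded above by a multiple of $\sigma(\calC)\cdot m(Z+\rho^*(K_\calC+2S)) = m\,\sigma^*Z\cdot[\calC] + m\deg\sigma^*(K_\calC+2S)$, minus the effective contribution of $E$ (which can be assumed nonnegative after checking $\sigma(\calC)\not\subseteq\supp E$, again only finitely many exceptions). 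The term $m\deg\sigma^*(K_\calC+2S)$ equals $m(2g(\calC)-2+2\#S)$, which is bounded by a constant times $\max\{1,\chi_S(\calC)\}$. The term $m\,\sigma^*Z\cdot[\calC]$ splits as $m\deg(\sigma^*Z\setminus S) + m\deg(\sigma^*Z\cap S)$; the first summand is $\leq m\varepsilon H$ by Proposition \ref{prop:ram}, and the second is bounded by $\#S$ times the maximal local multiplicity of $\calD$ (a geometric constant, since the section meets $\calD$ only over $S$ and $Z\leq \mathrm{Ram}$), hence again $\lesssim \max\{1,\chi_S(\calC)\}$.

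Putting these together yields $H \leq C_2\varepsilon H + C_3\max\{1,\chi_S(\calC)\}$; choosing $\varepsilon$ so that $C_2\varepsilon \leq \tfrac12$ and absorbing the (finitely many) excluded sections and the threshold $C(\varepsilon)\max\{1,\chi_S(\calC)\}$ from Proposition \ref{prop:ram}, we conclude $H \leq C\max\{1,\chi_S(\calC)\}$ with $C$ depending only on $Z_\eta$, $\deg\pi$ and the embedding. The main obstacle I anticipate is the bookkeeping around positivity: $Z$ itself need not be big, so one genuinely must work with the twist $Z+\rho^*(K_\calC+2S)$ and carefully track that the ``extra'' divisor $\rho^*(K_\calC+2S)$ contributes only an Euler-characteristic term, not a term growing with $\deg\sigma(\calC)$; and one must handle cleanly the finitely many sections contained in $Z$, in $\supp E$, or in the indeterminacy/singular locus, showing each such family has bounded degree by an independent elementary argument (e.g. they lie in fixed subvarieties whose own Lang--Vojta statements are either trivial or reduce to the curve case). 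A secondary subtlety is the multiplicatively dependent case from Theorem \ref{th:1.2}: there $(u,v)$ lies on a fixed curve $u^rv^s=\gamma$ of controlled degree, and one bounds $\deg\sigma(\calC)$ directly via the intersection of $(\pi\circ\sigma)(\calC)$ with $\pi(Z)$, exactly as at the end of the proof of Proposition \ref{prop:ram}, so this case contributes only to the constant.
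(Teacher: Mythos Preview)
Your plan assembles the right ingredients—Proposition~\ref{prop:Z+e}, Proposition~\ref{prop:ram}, and the generalized $abc$ of \cite{Brownawell1986}—but there is a genuine gap in how you combine them. The problematic step is the claim that $\deg(\sigma^*Z\cap S)$ is bounded by a geometric constant times $\#S$. This is false: $Z$ is not contained in $\calD$ (it is the closure of the ramification away from $\calD$), and at a point $P\in S$ with $\sigma(P)\in Z$ the local intersection multiplicity $\ord_P(\sigma^*Z)$ can be arbitrarily large, growing with the height of $\sigma$. You therefore cannot split $\sigma(\calC)\cdot Z$ into an $S$-part and a non-$S$-part and bound the former by hand. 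This is exactly where Brownawell--Masser enters, and not in the way you sketch: your displayed inequality $H_\calC(u)+H_\calC(v)\leq C_1(\deg(\sigma^*Z\setminus S)+\chi_S(\calC))$ is not what applying \cite{Brownawell1986} to ``the relation defining $\pi(Z\setminus\calD)$'' yields, since $(u,v)$ does not satisfy that relation. What the paper does is write $f(u,v,1)=\sum_{j\geq 2}w_j$ as a sum of $S$-unit monomials, set $w_1=-f(u,v,1)$, and apply \cite[Theorem~B]{Brownawell1986} to the identity $\sum_i w_i=0$. This bounds $H(w_1:\cdots:w_n)$ above by $\gamma_n\chi_S(\calC)+(n-2)$ times the truncated count of zeros of $f(u,v,1)$ outside $S$; on the other side, $H(w_1:\cdots:w_n)\geq H(f(u,v,1):\mu)$ is bounded below by the \emph{full} count $\sum_{P\in\calC}\max\{0,\ord_P f(u,v,1)\}$, which dominates $\sigma(\calC)\cdot Z$ and hence, via \eqref{eq:big}, dominates $\tfrac{1}{m}\deg\sigma(\calC)-2\max\{1,\chi_S(\calC)\}$. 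In short, Brownawell--Masser is the device that converts the full intersection $\sigma(\calC)\cdot Z$ into the truncated one plus an Euler-characteristic term—precisely the step your splitting tries to accomplish naively and cannot.

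A secondary issue: sections with $\sigma(\calC)\subset Z$ or $\sigma(\calC)\subset\supp E$ are not ``finitely many''—these loci are surfaces in $\calX$ and can carry infinitely many sections. The paper treats $\sigma(\calC)\subset\supp E$ by observing that each irreducible component of $E$, viewed over $\kappa(\calC)$, is a curve whose image meets the boundary of $\G_m^2$ in at least three points (this uses that $\pi(Z_\eta)$ avoids the singular points of $\PP^2\setminus\G_m^2$), hence is of log general type; one then invokes Siegel's theorem over function fields to bound the degree of integral sections lying on it.
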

\begin{proof}
	By Proposition \ref{prop:Z+e}, the divisor $Z+\rho^*(K_\calC + 2S)$ is a big $\Q$-Cartier divisor and there exist an integer $m$ indipendent of $\calC$, an effective divisor $E$ (indipendent of $\calC$) and a very ample divisor $H$ on $\calX$ such that $m(Z + \rho^*(K_\calC + 2S)) \sim H + E$. We can estimate the degree of $\sigma(\calC)$ by estimating $\sigma(\calC) \cdot H$.
	Recall that the constant $C$ implicitly depends on the projective embedding $\calX \rightarrow \PP^n$, and therefore on $H$. 
  
	First we notice that, if $\sigma(\calC)$ is contained in the support of $E$, then we obtain a bound of the desired form immediately. Indeed, assuming $E$ is irreducible by arguing on every irreducible component, since $\sigma(\calC) \subset E$, the restriction $\rho\vert_E$ is a flat dominant map where now $E$ has relative dimension 1, i.e. $E$ is a curve over the function field $\kappa(\calC)$. The generic fiber of the image $\pi(E)$ intersects $\pi(D)$ in at least three points, since our assumptions imply that the generic fiber of $\pi(Z)$ is disjoint from the singular points of $\PP^2 \setminus \G_m^2$. This shows that $E/\kappa(\calC)$ is a curve of log general type hence Siegel's Theorem applies (e.g. in the form of \cite{Lang_int}): in particular, either there are only finitely many integral sections $\sigma \in E(\kappa(\calC))$ or $E$ is isotrivial. In both cases we obtain directly that $\deg(\sigma(\calC))$ is bounded by $c_1 = c_1(Z_\eta, \deg \pi)$.

  We are therefore left with the case in which $\sigma(\calC)$ is not contained in the support of $E$. Then,
  \begin{equation}
	  \frac{1}{m} \deg \sigma(\calC) = \frac{1}{m} \sigma(\calC) \cdot H \leq \frac{1}{m} \sigma(\calC) \cdot (H + E) = \sigma(\calC) \cdot Z + \sigma(\calC) \cdot \rho^*(K_\calC + 2S).
    \label{eq:big}
  \end{equation}

  Since $\sigma$ is a section we get that $\sigma(\calC)\cdot \rho^*(K_\calC + 2S)$ is equal to the degree of $(K_\calC + 2S)$, hence
  \begin{equation}
    \sigma(\calC)\cdot \rho^*(K_\calC + 2S) \leq 2 \cdot \max \{ 1, \chi_S(\calC) \}.
  \end{equation}
  Therefore it is enough to bound the intersection $\sigma(\calC) \cdot Z$.
  
We view $\G_m^2$ embedded in $\PP^2$ as the complement of the divisor $UVW=0$, where $U, V$ and $W$ are the homogeneous coordinates in $\PP^2$. 
Since $\pi(Z_{\eta})$ avoids the singular points of $\PP^2 \setminus \G_m^2$,  if we consider the morphism $\pi \circ \sigma: \calC \to \PP^2 \times \calC$ restricted to the complement of $S$ (which by abuse of notation we still denote by $\pi \circ \sigma$), there exist two $S$-units $u,v$ such that $\pi \circ \sigma$ is given by $( (u:v:1),\ \id_\calC)$. 
  
  Let $f(U,V,W) = 0$ be an equation for $\pi(Z)$, which is a polynomial with coefficients in $\kappa(\calC)$. Then, we can estimate the intersection $\sigma(\calC) \cdot Z$ by counting the number of zeros of $f$ evaluated at $(u,v,1)$. Therefore \eqref{eq:big} can be written as
  \begin{equation}
	  \sum_{P \in \calC} \max\{ 0, \ord_P f(u,v,1)\} \geq \frac{1}{m \deg \pi} \max\{ H(u), H(v) \} - \frac{2}{\deg \pi}\max \{ 1, \chi_S(\calC) \}.
    \label{eq:HB1}
  \end{equation}
 
On the other hand, if at least one of the heights of $u$ and $v$ is larger than the constant appearing in Proposition \ref{prop:ram}, we have that for every $\varepsilon > 0$ the following equality holds:
\begin{equation}
    \sum_{P \in \calC \setminus S} \max\{ 0, \ord_P f(u,v,1) \} \le \varepsilon \max\{ H(u), H(v) \}.
  \label{eq:HB2}
\end{equation}

Up to a finite extension of $S$, independent of $u$ and $v$, we can assume that the coefficients of $f$ are $S$-units, and therefore $f(u,v,1)$ is an $S$-integer. Given the hypothesis on the intersection between $\pi(Z_{\eta})$ and the boundary $\PP^2 \setminus \G_m^2$, the constant term $f(0,0,1) = \mu$ is a nonzero $S$-unit (independent of $u$ and $v$). We write $f(u,v,1)$ as sum of monomials using the following notation

  \[
    w_1 := -f(u,v,1) \qquad \qquad f(u,v,1) := \sum_{j = 2}^n w_j.
  \]
  We get $w_1 + \cdots + w_n = 0$, where $w_n = \mu$. If a proper subsum vanishes, this gives an equation of the form $g(u,v) = 0$ which implies that $\deg\pi(\sigma(\mathcal C))$ is bounded by $\deg g \le \deg f$ thus finishing the proof. In particular, this might happen when $u$ and $v$ are multiplicatively dependent, and one obtains a bound on the exponents of the multiplicative relation.

  Therefore, we can assume that no proper subsum of $w_1 + \cdots + w_n = 0$ vanishes, so that we can apply \cite[Theorem B]{Brownawell1986} to obtain
  \begin{equation}
	  H(w_1: \cdots: w_n) \leq \gamma_n \max \{ 0, 2g -2\} + \sum_{P \in \calC} (\gamma_n - \gamma_{m_P}),
    \label{eq:BM}
  \end{equation}
  where
  \[
    \gamma_0 = 0, \qquad \gamma_{\ell} = \frac{1}{2} (\ell-1)(\ell-2) \quad \forall \ell \ge 1,
  \]
  and $m_{P}$ denotes the number of $w_i$s that are units at $P$. The right hand side of \eqref{eq:BM} can be bounded from above obtaining
  \begin{equation}
	  H(w_1: \cdots: w_n) \leq \gamma_n \max\{ 0, 2g-2 \} +\sum_{P \in S} \gamma_n + \sum_{P\in \calC \setminus S} (\gamma_n - \gamma_{m_P}).
    \label{eq:BM2}
  \end{equation}

  Notice that $\gamma_{m_P} = \gamma_n$ for all but finitely many $P \in \calC \setminus S$. In this finite set, which is the set of zeros of $f(u,v,1)$ outside $S$, we have $\gamma_{m_P} = \gamma_{n-1}$. Therefore we can rewrite equation \eqref{eq:BM2} as

  \begin{equation*}
	  H(w_1:\cdots: w_n) \leq 2\gamma_n \max\{ 1, 2g - 2 + \# S \}+ (n-2) \sum_{P \in \calC \setminus S} \max \{ 0, \ord_P f(u,v,1) \}.
  \end{equation*}
  
  Using \eqref{eq:HB2} we have that, for every $\varepsilon > 0$,
  \begin{equation}
      H(w_1:\cdots: w_n) \leq (n-1)(n-2) \max\{ 1, 2g - 2 + \# S \} + (n-2) \varepsilon \max \{ H(u), H(v) \}.
    \label{eq:BM4}
  \end{equation}


Since $w_1=-f(u,v,1)$ and $w_n=\mu$, we can bound from below the projective height as
  \begin{equation}
	  H(w_1:\cdots:w_n) \geq H(f(u,v,1) : \mu) \geq \sum_{P \in \calC} \max \{0, \ord_P(f(u,v,1)) \} - H(\mu).
    \label{eq:BM6}
  \end{equation} 
Combining \eqref{eq:HB1} with \eqref{eq:BM6}, we have
  \begin{equation}
    H(w_1:\cdots:w_n) \geq \frac{1}{m \deg \pi} \max \{ H(u), H(v) \}- \frac{2}{\deg \pi}\max \{ 1, \chi_S(\calC) \} - H(\mu).
    \label{eq:BM7}
  \end{equation}
  Using \eqref{eq:BM4} with $\varepsilon = \dfrac{1}{ 2m(n-2) \deg \pi} $ together with \eqref{eq:BM7} we obtain
  \begin{equation} \label{eq:height_uv}
     \max\{ H(u), H(v) \} \leq c_2 \max \left \{1, \chi_S(\mathcal C)\right \},   
\end{equation}
where
\[
c_2:=\max \left \{2m \deg \pi \left ( (n-1)(n-2) + \frac{2}{\deg \pi} + H(\mu) \right ), c_3 \right \}.
\]
and $c_3$ is the constant appearing in Proposition \ref{prop:ram}, which depends only on $Z_\eta$ and on $\deg \pi$. 

Finally, we obtained that either the degree of $\sigma(\mathcal C)$ is directly bounded by a constant which depends only on $Z_\eta$ and on the degree of $\pi$, or the maximum of the heights of $u$ and $v$ is bounded by \eqref{eq:height_uv}, which implies a bound for the degree of the image $\sigma(\calC)$.
\end{proof}

\section{Applications}
\label{sec:appl}

In this section we give an explicit application of Theorem \ref{th:2}, which generalizes \cite[Theorem 1]{CZGm} and \cite[Theorem 1.3]{TurchetTrans}.

\begin{thmyy}
  \label{th:exP2}
  Let $\calD$ be a non-isotrivial stable flat family of divisors in $\PP^2$ over a smooth integral curve $\calC$. Assume that there exists a finite set of points $S \subset \calC$ such that, for every $P \in \calC \setminus S$, the fiber $\calD_P$ of $\calD$ over $P$ has simple normal crossings singularities and $r \geq 3$ components of total degree $d \geq 4$. Then, there exists a constant $C$, depending only on the generic fiber of $\calD$, such that for every section $\sigma: \calC \to \PP^2 \times \calC$ verifying $\supp (\sigma^*\calD) \subseteq S$ the following holds:
  \[
    \deg \sigma(\calC) \leq C \cdot \max\{ 1, \chi_S(\calC) \}.
  \]
\end{thmyy}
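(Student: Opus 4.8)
The plan is to deduce the statement from Theorem \ref{th:2} by exhibiting the family $(\PP^2\times\calC,\calD)$, away from $S$, as a non-isotrivial ramified cover of $\G_m^2$ as in Section \ref{sec:3folds}. I begin with a reduction on the number of components of the generic fiber $\calD_\eta$. If some three components of $\calD_\eta$ have total degree $\geq 4$, then every section $\sigma$ with $\supp(\sigma^*\calD)\subseteq S$ is a fortiori $S$-integral with respect to the still simple normal crossings subdivisor cut out by those three components, so it is enough to treat the case of exactly three components of total degree $d\geq 4$. The remaining case is $r\geq 4$ with every triple of components having total degree $\leq 3$; then the three components of smallest degree must all be lines, and choosing three such lines (in general position, by the simple normal crossings hypothesis) identifies every $S$-integral section with a solution of an $S$-unit equation with several terms (in the torus coordinates determined by the three lines, each remaining component is the zero locus of a Laurent polynomial whose monomials, and the value of the polynomial along the section, are $S$-units). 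As recalled in the introduction, \cite[Theorem B]{Brownawell1986} (see also \cite{Voloch1985}) then bounds $\max\{H(u),H(v)\}$ and hence $\deg\sigma(\calC)$ by $C\max\{1,\chi_S(\calC)\}$, after separately handling the case in which a proper subsum vanishes, which confines the section to a curve of bounded degree.

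So assume $\calD_\eta=D_1+D_2+D_3$, with $D_i$ defined by a form $f_i$ of degree $d_i$ and $d_1+d_2+d_3=d\geq 4$. Choosing a basis $(a_1,a_2,a_3),(b_1,b_2,b_3)$ of the rank-two lattice $\{(n_1,n_2,n_3)\in\Z^3 : n_1d_1+n_2d_2+n_3d_3=0\}$ yields a morphism
\[
  \pi_0\colon \PP^2\setminus(D_1\cup D_2\cup D_3)\longrightarrow\G_m^2,\qquad p\longmapsto\bigl(f_1(p)^{a_1}f_2(p)^{a_2}f_3(p)^{a_3},\ f_1(p)^{b_1}f_2(p)^{b_2}f_3(p)^{b_3}\bigr),
\]
whose two coordinates are ratios of forms of equal degree, hence rational functions on $\PP^2$. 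Since $\calD_\eta$ is simple normal crossings, $\pi_0$ is dominant and generically finite onto $\G_m^2$ (an isomorphism precisely when $d_1=d_2=d_3=1$, which $d\geq 4$ excludes), and the preimage of the toric boundary of $\G_m^2$ is supported on $D_1\cup D_2\cup D_3$. Moreover $K_{\PP^2}+\calD_\eta\sim\calO_{\PP^2}(d-3)$ is big because $d\geq 4$, and $\calD_\eta$ is simple normal crossings, so $(\PP^2,\calD_\eta)$ is a log smooth pair of log general type; it is non-isotrivial because $\calD$ is. Taking $\calX$ to be a resolution of the indeterminacy of $\pi_0\times\id_\calC$ on $\PP^2\times\calC$ — the indeterminacy lies over $S$ and inside $\calD$, so every section with $\supp(\sigma^*\calD)\subseteq S$ lifts to $\calX$ and degrees are comparable — and using the stable-family hypothesis to provide a log canonical model structure over $\calC$, I obtain a fibered threefold as in Section \ref{sec:3folds}, with one condition still to verify: the image $\pi(Z_\eta)$ of the closure of the ramification divisor must be disjoint from the three singular points of $\PP^2\setminus\G_m^2$, i.e.\ the torus-fixed points of the target. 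As in the geometric analysis of \cite{CZGm}, this is arranged by an appropriate choice of the monomial map $\pi_0$ (equivalently of the lattice basis, equivalently up to composition with an automorphism of $\G_m^2$), so that the closure of $\pi_0(Z_\eta)$ in the target $\PP^2$ avoids all three torus-fixed points; this step is where the argument requires the most care.

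With the hypotheses of Section \ref{sec:3folds} in place, Theorem \ref{th:2} applies and produces a constant $C=C(Z_\eta,\deg\pi)$ with $\deg\sigma(\calC)\leq C\max\{1,\chi_S(\calC)\}$ for every section $\sigma\colon\calC\to\PP^2\times\calC$ satisfying $\supp(\sigma^*\calD)\subseteq S$; since $Z_\eta$ and $\deg\pi=\deg\pi_0$ are determined by the generic fiber of $\calD$, so is $C$, which is the assertion. I expect the only real obstacle to be the verification just mentioned — that the monomial map can be normalized so that the image of the ramification divisor avoids the torus-fixed points of $\PP^2$ — since this is precisely the last hypothesis of Section \ref{sec:3folds}; everything else is bookkeeping (building the model, checking log general type and non-isotriviality, and the reduction to the three-component case) together with a direct appeal to Theorem \ref{th:2}, whose positivity input, Proposition \ref{prop:Z+e}, already covers the case in which $Z$ is only relatively big.
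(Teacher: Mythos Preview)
Your overall strategy matches the paper's: build a map from $(\PP^2,\calD_\eta)$ to $\G_m^2$, verify the hypotheses of Section \ref{sec:3folds}, and invoke Theorem \ref{th:2}. Two points deserve comment.

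First, the reduction to exactly three components is unnecessary. The paper simply takes $g_1,g_2$ to be equations of $D_1,D_2$ and $g_3$ to be an equation for the \emph{sum} $D_3+\cdots+D_r$, then defines the single explicit map
\[
  \pi_0 = [\,g_1^{d_2d_3}:g_2^{d_1d_3}:g_3^{d_1d_2}\,]\colon \PP^2\to\PP^2,
\]
which is finite and everywhere defined because $\calD_\eta$ is simple normal crossings. Your case split (and the separate Brownawell--Masser argument for $\geq 4$ lines) is valid but avoidable.

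Second, and more importantly, you have the mechanism for the torus-fixed-point condition backwards. You propose to \emph{choose} the lattice basis so that the closure of $\pi_0(Z_\eta)$ avoids the torus-fixed points of the target $\PP^2$, citing \cite{CZGm}. But post-composing with an automorphism of $\G_m^2$ does not change the ramification divisor in the source, and it is not at all clear that such a choice can always be made. What \cite[Theorem 1]{CZGm} actually shows --- and what the paper invokes --- is that for the \emph{specific} map $\pi_0$ above, any irreducible component $E$ of the ramification whose image $\pi_0(E)$ passes through a singular point of $\PP^2\setminus\G_m^2$ must already lie in the support of $D^0$. Since $Z$ is by definition the closure of the ramification of $\pi_0\restriction_{\PP^2\setminus D^0}$, such components are excluded from $Z$ automatically. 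No choice is involved; the condition holds for free once you use the right map and remember that $Z$ only records ramification away from the boundary. This is the one real content step, and you should replace your ``arrange by choice'' sentence with this argument.
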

\begin{proof}
  By Remark \ref{rmk:p2}, $\calD$ is an ample (Cartier) divisor on $\PP^2 \times \calC$, and therefore we can assume that $\calD \in \lvert \calO_{\PP^2}(d) \boxtimes \calL \rvert$ for an ample $\calL \in \Pic(\calC)$ and $d \geq 4$. Let $0 \in \calC$ be the generic point and let $D^0$ be the generic fiber of the family $\calD \to \calC$. The hypotheses ensure that we can write $D^0 = D_1 + D_2 + \dots + D_r$, with $r \geq 3$. Let $g_{1}$ be an equation of $D_1$, $g_{2}$ be an equation of $D_2$, and $g_{3}$ be an equation of $D_3 + \dots + D_r$, of degree respectively $d_1, d_2$ and $d_3$; clearly $d_1 + d_2 + d_3 = d$. Let us consider the map
  \[
    \pi_0: \PP^2 \to \PP^2 \qquad \pi_0([x_0:x_1:x_2]) = [g_1^{d_2d_3}([x_0:x_1:x_2]): g_2^{d_1d_3}([x_0:x_1:x_2]): g_3^{d_1d_2}([x_0:x_1:x_2])],
  \]
  which is clearly a finite map and it is defined everywhere since $D^0$ has simple normal crossings singularities. Moreover $\pi_0(D^0) = \PP^2 \setminus \G_m^2$ and therefore $\pi_0$ restricts to a finite cover of $\G_m^2$ on $\PP^2 \setminus D^0$. The same argument as in \cite[Theorem 1]{CZGm} implies that if $E$ is an irreducible component of the ramification divisor of $\pi_0$ such that $\pi_0(E)$ passes through a singular point of $\PP^2 \setminus \G_m^2$ then $E$ lies in the support of $D^0$. 
  Therefore, we can extend $\pi_0$ to a generically finite map $\pi: \PP^2 \times \calC \dashrightarrow \PP^2 \times \calC$ which, after possibly resolving indeterminacy of the map, fits into the diagram 
  \[
    \xymatrix{ (\PP^2 \times \calC,\calD) \ar[d]_{pr_2}\ar[r]^\pi & \PP^2 \times \calC \ar[dl]_{pr_2} \\ \calC \ar@/^2.0pc/[u]^\sigma & }
  \]
  By the above discussion, the ramification divisor of $\pi$ has the property that every irreducible component $E$ such that $pr_1(\pi(E))$ passes through one of the singular points of $\PP^2 \setminus \G_m^2$ is contained in the support of $\calD$; in particular, $pr_2\left(\pi(Z)\cap\left( (\PP^2 \setminus \G_m^2)\times \calC\right)\right)$ is contained in $S$, where $Z$ is the ramification divisor of $\pi \restriction_{(\PP^2 \times \calC) \setminus \calD}$.
  Therefore, we can apply Theorem \ref{th:2} whose conclusion implies the theorem.
\end{proof}

\noindent We finally give an explicit example where Theorem \ref{th:exP2} applies. 

\begin{example}
Consider the $1$ parameter family of reducible plane quartics given by the vanishing of 
  \[
	  (y_0x_0)(y_0x_1)(y_0^2x_2^2-y_0^2x_1^2-y_1^2x_1x_0-y_0^2x_0^2),
  \]
  in $\PP^2 \times \PP^1$, and let $\calD \in \lvert \calO_{\PP^2}(4) \boxtimes \calO_{\PP^1}(4) \rvert$ be the associated divisor. Let $S$ be the finite set of points of $\PP^1$ for which the specialized divisor has no normal crossings singularities or does not have 3 irreducible components; note that $S$ contains at least two points, e.g. $[0:1]$ and $[1:\sqrt{2}]$. We define as in Theorem \ref{th:exP2}
  \[
	  \pi_0([x_0 : x_1 : x_2]) = [(y_0x_0)^2 :  (y_0x_1)^2 :  y_0^2x_2^2 - y_0^2x_1^2 -  y_1^2x_1 x_0 - y_0^2x_0^2],
  \]
  which is clearly a finite map and well defined outside of the generic fiber $D^0$ of $\calD$. In this case, one can compute explicitly the ramification of the finite map $\pi_0$ which is given by $(y_0x_0)(y_0x_1)(y_0^2x_2) = 0$; in particular, it has 3 irreducible components. Two of them, namely $y_0x_0 = 0$ and $y_0x_1 = 0$, are contained in the support of $D^0$, while $y_0^2x_2 = 0$ corresponds to the generic fiber of $Z$ in our previous notation. We see directly that $Z$ is an ample divisor lying in the linear system $\lvert \calO_{\PP^2}(1) \boxtimes \calO_{\PP^1}(2) \rvert$ and moreover $Z \sim K_{\PP^2 \times \PP^1} + \calD$. Clearly, since $S$ contains the point $[0:1]$, one sees directly that $pr_2\left(\pi(Z)\cap\left( (\PP^2 \setminus \G_m^2)\times \calC\right)\right)$ is contained in $S$, and therefore we can apply Theorem \ref{th:exP2} obtaining a bound on the degree of the image of sections $\sigma: \PP^1 \to \PP^2 \times \PP^1$ such that $\supp (\sigma^*\calD)$ is contained in $S$.
\end{example}

 %
 %
 %
 %

%


\bibliographystyle{alpha}	
\bibliography{gm}
	

\end{document}